\newtheorem{theorem}{Theorem}[section]
\newtheorem{lemma}[theorem]{Lemma}
\newtheorem{proposition}[theorem]{Proposition}
\theoremstyle{remark}
\renewenvironment{proof}[1][Proof]{ {\itshape \noindent {#1.}} }{$\Box$
\medskip}
\numberwithin{equation}{section}
\newcommand{\R}{\mathbb{R}}
\newcommand{\N}{\mathbb{N}}
\newcommand{\E}{\mathbb{E}}
\newcommand{\eps}{\varepsilon}
\def\les{\lesssim}
\newcommand{\1}{\mathbbm{1}}
\newcommand{\cP}{\mathcal{P}}
\newcommand{\dx}{\, dx}
\newcommand{\dy}{\, dy}
\newcommand{\dz}{\, dz}
\DeclareMathOperator{\supp}{supp}
\definecolor{darkmagenta}{rgb}{1,0,1}
\newcommand{\thc}{\theta_{\rm crit}}
\newcommand{\cc}{c_{\rm crit}}
\newcommand{\be}{\begin{equation}}
\newcommand{\ee}{\end{equation}}
\begin{document}

\title[Long-time behavior for a nonlocal model]{Long-time behavior for a nonlocal model from directed polymers}
\author{Yu Gu, Christopher Henderson}

\address[Yu Gu]{Department of Mathematics, University of Maryland, 
College Park, MD 20742}

\address[Christopher Henderson]{Department of Mathematics, University of Arizona, Tucson, AZ 85721}

\maketitle

\begin{abstract}
We consider the long time behavior of solutions to a nonlocal reaction diffusion equation that arises in the study of directed polymers in a random environment.  The model is characterized by convolution with a kernel $R$ and an $L^2$ inner product.  In one spatial dimension, we extend a previous result of the authors [arXiv:2002.02799], where only the case $R =\delta$ was considered; in particular, we show that solutions spread according to a $2/3$ power law consistent with the KPZ scaling conjectured for directed polymers.  In the special case when $R = \delta$, we find the exact profile of the solution in the rescaled coordinates.  We also consider the behavior in higher dimensions.  When the dimension is three or larger, we show that the long-time behavior is the same as the heat equation in the sense that the solution converges to a standard Gaussian.  In contrast, when the dimension is two, we construct a non-Gaussian self-similar solution.
%
%
%
\end{abstract}

%

\maketitle

\section{Introduction}

In this paper, we investigate the following collection of models:
\begin{equation}\label{e.main}
	\begin{cases}
		\partial_t g
			= \frac{1}{2} \Delta g
			+ g \left( \langle R * g, g\rangle - R*g\right)
				\qquad &\text{ in } (0,\infty)\times \R^d,\\
		g = g_0
				\qquad &\text{ on } \{0\} \times \R^d,
	\end{cases}
\end{equation}
where $R*g$ refers to convolution in the spatial variables only and the brackets $\langle \cdot, \cdot\rangle$ denote the $L^2$ inner product in the spatial variables.  We always assume that
\be\label{e.g_0}
	\int g_0(x) \dx = 1
		\qquad\text{ and }\qquad
		0 \leq g_0 \in C_c(\R^d),
\ee
which implies that, for all $t>0$,
\be
	\int g(t,x) dx = 1;
\ee
thus, \eqref{e.main} describes the evolution of a probability density.  Here, either $R$ is the delta distribution $\delta$ or $R$ is a continuous, nonnegative function such that $\int R \dx = 1$. 
In the latter case, we assume that there is a continuous even function $\phi \geq 0$ such that
\be\label{e.R}
	R(x)
		= \phi*\phi(x), \quad \text{ and } \int \phi(x)\dx=1.
\ee


\subsubsection*{The connection to directed polymers.} The equation arises from our study of directed polymers in a random environment, and we discuss the model below. For a Gaussian random field $\{V(t,x): (t,x)\in\R^{d+1}\}$ and an independent Brownian motion $B=\{B_t: t\geq0\}$, consider the Gibbs measure associated with the Hamiltonian $H_t(B)=\int_0^t V(s,B_s)ds$:
\[
\begin{aligned}
&\mu_t(dx)=q(t,x)\dx,\\
&q(t,x)=Z_t^{-1}\E_B[\delta(B_t-x)\exp(H_t(B))],\quad\quad\quad Z_t=\E_B[\exp(H_t(B))].
\end{aligned}
\]
Here $\E_B$ is the expectation only with respect to the Brownian motion $B$, with the random Gaussian field fixed. Thus, $\mu_t(dx)$ can be viewed as the endpoint distribution of the Brownian motion ``reweighted'' by the random environment, through the factor $\exp(H_t(B))$, which is to model the polymer path in a heterogeneous environment. The properties of $q(t,\cdot)$, in particular how the $x$-variable scales with respect to time, is a notoriously difficult problem in probability and statistical physics. It is conjectured that, in $d=1$ and if $V$ is sufficiently short-range correlated, we should have $\int |x|^p q(t,x)\dx\approx t^{2p/3}$ for large $t$, i.e., the endpoint of the polymer path is superdiffusive with an exponent $2/3$, which falls into the KPZ universality class. So far the conjecture is only proved for a few specific models with certain integrable structures, and the proof is very much model-dependent. In $d\geq2$, much less is conjectured and known, and even the correct superdiffusive exponent is unclear. We refer to the monograph \cite{comets2017directed} and the survey \cite{corwin2012kardar} for results and discussions in this direction. 

Our interest is in the averaged density $Q_1(t,x):=\E[q(t,x)]$, where $\E$ is now taken with respect to the Gaussian field $V$, and the ultimate goal is to study the asymptotic behavior of $Q_1$ by a robust analytic approach that covers all possible correlation structures of $V$. For example, in $d=1$ one would like to prove a \emph{universality} result saying that $\int |x|^p Q_1(t,x)\dx\approx t^{2p/3}$ for $t\gg1$. As $Q_1$ is the averaged density of the endpoint of a random path, it is tempting to try to  derive a PDE for its evolution, similar to the Fokker-Planck equations associated with diffusion processes. This motivated the study in  \cite{GuHenderson}. For a large class of Gaussian fields $V$, which has zero mean and is white in time and possibly colored in space, with the covariance function 
\[
\E[V(t,x)V(s,y)]=\delta(t-s)R(x-y)\footnote{Since $R$ is the spatial covariance function, there exists a function $\phi$ so that \eqref{e.R} holds, and the construction can be found in \cite[Page 2]{GuHenderson}.},
\] we find that, instead of solving a single Fokker-Planck equation, what governs the evolution of $Q_1$ is a hierarchical system: define the $n-$point correlation function  
\[
Q_n(t,x_1,\ldots,x_n):=\E[\prod_{j=1}^n q(t,x_j)],
\] 
then $Q_1$ solves the equation
\begin{equation}\label{e.eqQ1}
\begin{aligned}
\partial_t Q_1(t,x)=\tfrac12\Delta Q_1(t,x)&-\int Q_2(t,x,y)R(x-y)\, dy\\
&+\int Q_3(t,x,y,z)R(y-z)\,dydz.
\end{aligned}
\end{equation}
In fact, for any $n\geq1$, the equation of $Q_n$ contains $Q_{n+1}$ and $Q_{n+2}$. The  nonlocal terms in \eqref{e.eqQ1} describe the mutual intersection of multiple polymer paths as they wander in the random environment to maximize the collected energy, and the kernel $R$ corresponds to how the paths' intersection is measured. The hierarchical PDE system is similar to the BBGKY hierarchy in kinetic theory. Inspired by the molecular chaos assumption there, we assume that in large time $Q_n$ can be approximately factorized: $Q_n(t,x_1,\ldots,x_n)\approx \prod_{j=1}^n Q_1(t,x_j)$, and this helps to reduce \eqref{e.eqQ1} to \eqref{e.main}. Therefore, the equation we study in this paper, can be viewed   as an approximation of the hierarchical system which describes the actual evolution of the polymer endpoint density. While  it is unclear at the moment how to justify the factorization assumption used to link the true evolution \eqref{e.eqQ1} with the ``approximate'' evolution \eqref{e.main}, the results in \cite{GuHenderson} already show an intriguing connection, which we discuss in greater detail below. Despite this simplification, we expect \eqref{e.main} to retain several key features of the original equation \eqref{e.eqQ1}, and furthering our understanding of \eqref{e.main} may also help with the study of the hierarchy.  This motivates the current study.

\subsubsection*{Rough description of the main results}
In~\cite[Theorem~1.3]{GuHenderson}, we examined the special case of~\eqref{e.main} in which $R = \delta$ and $d=1$, which corresponds to the situation when the Gaussian environment is also white in space. Here, we showed that the KPZ scaling, $x\sim t^{2/3}$ is exhibited in the sense that, $\|g(t)\|_{\infty} = \mathcal{O}(t^{-2/3})$ and, for any $p\geq 1$, the $p$th moment of $g$ is bounded from above and below by $t^{2p/3}$, up to a constant, i.e.,
\begin{equation}\label{e.10241}
\int |x|^p g(t,x)dx\approx t^{\frac{2p}{3}}, \quad\quad \mbox{ for } t\geq 1.
\end{equation}  As a result, it follows that $g \approx t^{-2/3}$ for any $|x| \leq \mathcal{O}(t^{2/3})$ and $g \ll t^{-2/3}$ for any $|x| \gg t^{2/3}$

Our focus in this paper is in generalizing and refining the above result in one dimension and in investigating the behavior of $g$ in higher dimensions.  Roughly, we establish the following properties of~\eqref{e.main}.
\begin{enumerate}[(1)]
\item When $R$ is continuous and $d=1$, we show that the KPZ-scaling, $x\sim t^{2/3}$, conjectured for the full polymer model is exhibited by $g$.  In particular, we prove that $\|g(t)\|_{\infty} = \mathcal{O}(t^{-2/3})$ and, for any $p\geq 0$, the $p$th moment of $g$ is bounded above and below by $t^{2p/3}$, as in \eqref{e.10241}.  This extends the results of~\cite[Theorem~1.3]{GuHenderson}, and can be viewed as a universality result as the scaling exponent $2/3$ does not depend on the detailed expression of $R$. As discussed below, significant difficulties arise in generalizing the proof from the case $R=\delta$ to the present one. 
\item When $R=\delta$ and $d=1$, we establish more precise estimates on the long-time behavior of $g$.  Specifically, we identify the limit of $t^{2/3} g(t, xt^{2/3})$ as $t\to\infty$ to be, up to a multiplicative constant, the indicator function of an interval. In other words, if we let $X_t$ be a random variable with the density $g(t,\cdot)$, with \eqref{e.10241} we have $\E[|X_t|^p]\approx t^{2p/3}$, and here we further prove that $t^{-2/3}X_t$ converges to a uniform distribution, as $t\to\infty$.

\item When $d\geq 2$, the behavior is quite different, as expected for the directed polymers. In all cases, the diffusive scaling $x \sim t^{1/2}$ and $g \sim t^{-d/2}$ holds.  When $d\geq 3$, we show that $g$, under this scaling,  converges to a standard Gaussian. This is exactly the behavior of the heat equation; in other words, the effect of the nonlinearity is negligible.  The result is consistent with the diffusive behaviors of directed polymers in high temperatures in $d\geq3$ \cite{bolt,spencer} \footnote{The polymer model actually depends on a parameter $\beta$ which is the inverse temperature, and it goes into our approximate model \eqref{e.main} only as a multiplicative constant of the nonlinear terms (see \cite[Equation (1.10)]{GuHenderson}). As it does not play a role in our analysis, we do not specify it here.}. On the other hand, when $d=2$, there are solutions $g$ of~\eqref{e.main} that, in the same diffusive scaling, do not converge to a Gaussian. Thus, $d=2$ is the critical dimension for \eqref{e.main}, another feature of the polymer model.
\end{enumerate}

We note that the local well-posedness (small time existence and uniqueness) of~\eqref{e.main} with initial data in, say, $L^1(\R^d)\cap L^2(\R^d)$ is straightforward because it is a semilinear equation where the nonlinear terms are ``relatively smooth'' in $g$. On the other hand, the {\em a priori} estimates that we obtain in the course of this work show that blow-up cannot occur and (unique) solutions can be extended for all time.  In other words, a proof of global well-posedness is straightforward using the estimates we establish below. Hence, we omit it.


\subsection{Main Results}

We now state our results more precisely.

\subsubsection{Decay consistent with the conjectured scaling for directed polymers}

Our first theorem is the extension of the results in~\cite[Theorem~1.3]{GuHenderson} on the growth of moments to the setting where $R$ is continuous.

\begin{theorem}\label{t.decay}
	Let $d=1$.  Suppose that $g$ solves~\eqref{e.main} with $g_0$ satisfying~\eqref{e.g_0} and with $R$ satisfying~\eqref{e.R} or $R = \delta$.  Then, for any $p>0$,
	\be\label{e.c711}
		\left(\int_\R |x|^p g(t,x) dx\right)^{1/p}
			\approx t^\frac{2}{3}
				\qquad\text{ for all } t\geq 1.
	\ee
	As a consequence, we have $\displaystyle \liminf_{t\to\infty} t^{2/3} \|g(t)\|_\infty, t^{2/3}\|g(t)\|_2^2 >0$.  The bounds here depend only on $\supp(g_0)$.
\end{theorem}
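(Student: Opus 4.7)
The plan is to establish the two-sided moment bound $m_p(t) := \int_\R |x|^p g(t,x)\,dx \approx t^{2p/3}$; the stated consequences then follow from its \emph{upper} bound via Chebyshev's inequality and mass conservation $\int g = 1$, which together confine at least half the mass of $g(t,\cdot)$ to an interval of length $O(t^{2/3})$, whence pigeonhole yields $\|g(t)\|_\infty \gtrsim t^{-2/3}$ and Cauchy--Schwarz yields $\|g(t)\|_2^2 \gtrsim t^{-2/3}$.

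The central differential identity, obtained by differentiating $m_p$, integrating by parts on the diffusive term, and symmetrizing using the evenness of $R = \phi*\phi$, reads
\begin{equation*}
	\dot m_p(t) = \tfrac{p(p-1)}{2}\,m_{p-2}(t) + \iint_{\R\times\R} R(x-y)\,g(t,x)g(t,y)\Bigl[m_p(t) - \tfrac{|x|^p+|y|^p}{2}\Bigr]dx\,dy;
\end{equation*}
for $R=\delta$ the bracket collapses to $m_p - |x|^p$, recovering the identity used in~\cite{GuHenderson}. To close a Gronwall-type upper bound on this identity I would pair it with a dissipation estimate for $\lambda(t) := \langle R*g, g\rangle = \|\phi*g\|_2^2$: integrating by parts twice and applying Cauchy--Schwarz with the probability measure $g(t,x)\,dx$ gives
\begin{equation*}
	\dot\lambda(t) = -\|\partial_x(\phi*g)\|_2^2 + 2\lambda^2 - 2\int (R*g)^2 g\,dx \leq -\|\partial_x(\phi*g)\|_2^2,
\end{equation*}
so $\lambda$ is monotonically decreasing. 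Further arguments leveraging the Cauchy--Schwarz gap together with a Nash-type interpolation should then yield the polymer-scaling decay $\lambda(t), \|g(t)\|_\infty \lesssim t^{-2/3}$ (analogous to the corresponding estimates for $R=\delta$ in~\cite{GuHenderson}); feeding these into the moment identity produces $m_p(t) \leq C t^{2p/3}$.

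For the lower bound $m_p(t) \gtrsim t^{2p/3}$, the mechanism is that the nonlinear term penalizes concentration and forces $g$ to spread superdiffusively: were $g$ concentrated in an interval of length $\ell(t) \ll t^{2/3}$ (but long compared to the correlation length of $R$), then $R*g$ would be of order $1/\ell$ on this interval and the selection term would produce a large outward flux incompatible with this concentration. I would turn the heuristic into a rigorous lower bound on $m_p$ by running an ODE comparison on the pair $(m_p,\lambda)$ through the symmetrized identity above, adapting the corresponding argument for $R=\delta$ from~\cite{GuHenderson}.

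The hard part will be the absence of any pointwise comparison between $R*g$ and $g$: at a sharp peak of $g$ the convolution $R*g$ can be strictly smaller, blocking a direct transcription of the $R=\delta$ arguments which relied on inequalities like $g^2 \leq g\,\|g\|_\infty$. The remedy, which guides all of the estimates above, is to replace $\|g\|_2^2$ systematically by $\|\phi*g\|_2^2$ (so the factorization $R=\phi*\phi$ produces the clean dissipation identity displayed) and to exploit that on the target scale $x \sim t^{2/3}$, which far exceeds the correlation length of $R$, convolution by $R$ is a small perturbation of the identity whose error can be absorbed by the diffusive term.
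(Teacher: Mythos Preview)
Your skeleton is correct: the dissipation identity
\[
	\dot\lambda = -\|\partial_x(\phi*g)\|_2^2 - 2\int g\,(R*g - \lambda)^2\,dx
\]
is exactly the paper's equation~\eqref{e.differential_inequality}, and the moment identity, the reduction of the $L^2$ and $L^\infty$ lower bounds to the moment upper bound, and the deferral of the moment lower bound to the variational argument of~\cite{GuHenderson} all match the paper's route. The gap is in the single sentence ``Further arguments leveraging the Cauchy--Schwarz gap together with a Nash-type interpolation should then yield $\lambda(t),\|g(t)\|_\infty\lesssim t^{-2/3}$.'' This is precisely where all the work lies, and the mechanism you propose is not the one that succeeds.

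A Nash inequality in $d=1$ gives $\lambda^{3}\lesssim D_g:=\|\partial_x(\phi*g)\|_2^2$, which integrated through $\dot\lambda\le -D_g$ yields only $\lambda\lesssim t^{-1/2}$, the diffusive rate. To reach $t^{-2/3}$ one must extract a \emph{quantitative} lower bound from the Cauchy--Schwarz defect: the paper proves (\Cref{l.integral_bound}) that
\[
	\int g\,(R*g - \lambda)^2\,dx \;\gtrsim\; \frac{\lambda^5}{D_g},
\]
after which $\dot\lambda \le -D_g - c\,\lambda^5/D_g \le -2\sqrt c\,\lambda^{5/2}$ by Young's inequality, and this integrates to $t^{-2/3}$. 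Your proposal does not indicate how to obtain this $\lambda^5/D_g$ lower bound, and the heuristic you offer for it --- that on the target scale $x\sim t^{2/3}$ convolution by $R$ is a small perturbation of the identity --- is not the operative mechanism. The defect term lives where $g$ is concentrated, and there the relevant scale is that of $R$, not $t^{2/3}$; the paper instead uses the \emph{local-in-time Harnack inequality} (\Cref{p.harnack}) to prove a pointwise comparison $R*g(t,x)\gtrsim g(t,x)^2/M_g(t)$ for $t\ge 1$ (\Cref{l.maxima_guw}), and then a delicate level-set argument (\Cref{l.far_level_sets} and the proof of \Cref{l.integral_bound}) to convert this into the displayed functional inequality. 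This Harnack-based regularity input, which has no analogue in your outline, is also what the paper uses to pass from $\lambda\lesssim t^{-2/3}$ to $\|g\|_\infty\lesssim t^{-2/3}$.
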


In the statement of the above theorem, $a\approx b$ means $C^{-1}b\leq a\leq Cb$ for some constant $C>0$. Similarly, we will use $a\les b$ if $a\leq Cb$. We collect the precise meaning of all other notations  in \Cref{s.notation} below.

As alluded to above, the proof of \eqref{e.c711} when $R=\delta$ is exactly the content of \cite[Theorem~1.3]{GuHenderson}; however, the lower bound on the rescaled $L^\infty$ and $L^2$ norms of $g$ are new.  These are not difficult to prove after the other moment bounds have been established, but are stated here as these lower bounds are an essential ingredient in the establishing the precise behavior of $g$ (see \Cref{t.strong}).

The main new content of \Cref{t.decay} are the moment bounds in the case where $R$ is continuous.  The key step in proving this is the following $L^\infty$ bound on $g$.
\begin{proposition}\label{p.decay}
	Under the assumptions of \Cref{t.decay}, for all $t\geq 0$,
	\[
		\|g(t)\|_{\infty}
			\lesssim \min\{t^{-1/2}, t^{-2/3}\}.
	\]
\end{proposition}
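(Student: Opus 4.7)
My plan is to first establish an $L^2$-type decay and then upgrade to an $L^\infty$ bound via the heat semigroup. Set $u := g$ in the case $R = \delta$ and $u := \phi * g$ in the case $R = \phi * \phi$; in either case $u \geq 0$ is a probability density. Differentiating $\|u\|_2^2$ in time and using the identity $\|\phi * g\|_2^2 = \int g \cdot (R*g)\,dx$, the reaction terms combine into $2\|u\|_2^4 - 2\int g(R*g)^2\,dx$, and Cauchy--Schwarz on the probability measure $g\,dx$ gives
$$\|u\|_2^4 = \Bigl(\int g \cdot (R*g)\,dx\Bigr)^2 \leq \int g \cdot (R*g)^2\,dx.$$
Hence $\frac{d}{dt}\|u\|_2^2 \leq -\|\nabla u\|_2^2$. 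Combined with Nash's inequality in one dimension, $\|\nabla u\|_2^2 \gtrsim \|u\|_2^6$ (using $\|u\|_1 = 1$), this yields $\|u(t)\|_2^2 \lesssim t^{-1/2}$; in particular $\lambda(t) := \langle R*g, g\rangle(t) \lesssim t^{-1/2}$.

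To pass to the $L^\infty$ bound $\|g(t)\|_\infty \lesssim t^{-1/2}$, I would use Duhamel's formula starting from time $t/2$. The ``free'' term $\|e^{(t/2)\Delta/2}g(t/2)\|_\infty \lesssim t^{-1/2}$ follows from the one-dimensional heat kernel $L^1 \to L^\infty$ estimate and $\|g(t/2)\|_1 = 1$. The Duhamel integral $\int_{t/2}^t e^{(t-s)\Delta/2}\bigl[g(\lambda - R*g)\bigr](s)\,ds$ is more delicate; here one must exploit both the decay $\lambda(s) \lesssim s^{-1/2}$ and the mass-cancellation $\int g(\lambda - R*g)\,dx = 0$ in order to close a self-consistent estimate for $\|g(t)\|_\infty$.

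The sharper bound $\|g(t)\|_\infty \lesssim t^{-2/3}$ for $t \geq 1$ should be obtained by a bootstrap: once one has $\|g\|_\infty \lesssim s^{-1/2}$ and $\lambda \lesssim s^{-1/2}$, feeding these back into a refined version of the energy estimate can improve the decay rate of $\|u\|_2^2$, which via the Duhamel representation upgrades to the desired $t^{-2/3}$ bound on $\|g\|_\infty$.

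The main obstacle I foresee is the nonlocal term $R*g$. In the case $R = \delta$ treated in \cite{GuHenderson}, the energy inequality reduces to the pointwise Jensen identity $(\int g^2)^2 \leq \int g \cdot \int g^3 = \|g\|_3^3$, and maximum-principle arguments at peaks of $g$ apply directly since $R*g = g$. For continuous $R$, these cancellations are nonlocal and must be recovered in an averaged sense by working with $u = \phi*g$ and using Cauchy--Schwarz on the probability measure $g\,dx$; analogously, the Duhamel step cannot proceed by simple $L^\infty$ bounds on the nonlinearity and requires more delicate arguments exploiting the cancellation between $\lambda$ and $R*g$.
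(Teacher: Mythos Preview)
Your energy identity and the $t^{-1/2}$ decay of $E_g=\|u\|_2^2$ via Nash are correct and coincide with what the paper does. The real issue is the step from $t^{-1/2}$ to $t^{-2/3}$, which is the entire content of the proposition for $t\geq 1$; your ``bootstrap'' paragraph does not contain a mechanism for this improvement.

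Concretely: the inequality you derive is
\[
	\dot E_g + D_g = -2\int g\,(R*g - E_g)^2\,dx \leq 0,
\]
and you then discard the right-hand side. With only $\dot E_g \leq -D_g$ and Nash ($D_g \gtrsim E_g^3$ in $d=1$) you obtain $\dot E_g \lesssim -E_g^3$, hence $E_g \lesssim t^{-1/2}$, and this is sharp for that inequality (it is saturated by the heat equation). Feeding the bounds $E_g, M_g \lesssim s^{-1/2}$ back in does not change the exponent: Nash still only yields $D_g \gtrsim E_g^3$, and nothing in your scheme produces $\dot E_g \lesssim -E_g^{5/2}$, which is what $t^{-2/3}$ requires. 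The missing idea is precisely a quantitative \emph{lower} bound on the term you dropped,
\[
	\int g\,(R*g - E_g)^2\,dx \;\gtrsim\; \frac{E_g^5}{D_g},
\]
which, combined with $D_g$ via Young's inequality, gives $\dot E_g \lesssim -E_g^{5/2}$. In the paper this lower bound is the hard step (\Cref{l.integral_bound}); it is proved by a level-set argument that compares $g$, $\phi*g$, and $R*g$ using a local-in-time Harnack inequality (\Cref{p.harnack}). None of this is visible in your proposal, and a generic bootstrap will not produce it.

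A secondary point: your Duhamel route from $E_g\lesssim t^{-1/2}$ to $M_g\lesssim t^{-1/2}$ does not close with the crude bounds you indicate (the $L^1\to L^\infty$ and $L^\infty\to L^\infty$ smoothing estimates on the nonlinearity both give an $O(1)$ contribution over $[t/2,t]$). The paper avoids this entirely by a comparison-principle argument: with $E_g(t)\lesssim t^{-2/3}$ already in hand and the pointwise relation $R*g \gtrsim M_g$ at the spatial maximum (\Cref{l.maxima_guw}), one gets $\dot M_g \leq M_g\bigl(Ct^{-2/3} - C^{-1}M_g\bigr)$ and compares $M_g$ with $At^{-2/3}$. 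This step also genuinely uses Harnack (to relate $R*g$ at the maximum back to $M_g$ when $R\neq\delta$), which your outline does not address.
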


While the connection between \Cref{p.decay} and \Cref{t.decay} is analogous to that in the case $R=\delta$, the proof of \Cref{p.decay} is significantly more difficult, which we discuss now.

The proof in the case $R=\delta$ contained in \cite[Theorem~1.3]{GuHenderson} relies on several exact identities that are no longer available.  In fact, the first step when $R=\delta$ is noticing that~\eqref{e.main} yields
\[
	\frac{d}{dt} \|g(t)\|_{\infty}
		\leq \|g(t)\|_{\infty} ( \langle R* g, g\rangle - R*g)
		= \|g(t)\|_{\infty} ( \|g\|_2^2 - \|g(t)\|_{\infty}).
\]
The inequality follows from the fact that $\Delta g$ is nonpositive at a maximum, and the equality follows from using that $R=\delta$.  By H\"older's inequality and the fact that $g(t)$ is a probability measure, the right hand side is clearly nonpositive.  It is then a matter of quantifying this non-positivity (cf.~\cite[Lemma~4.3]{GuHenderson}).  Unfortunately, when $R$ is continuous, the above equality does not hold.  In fact, it is not even clear if $\|g(t)\|_{\infty}$ is decreasing with respect to $t$.

To overcome this difficulty, we may try to work, instead, with the $L^2$-norm of $g(t,\cdot)$.  When multiplying~\eqref{e.main} by $g$, and integrating by parts, we obtain
\be\label{e.c1011}
	\frac{1}{2} \frac{d}{dt} \|g(t)\|^2_2
		+ \frac{1}{2} \int |\nabla g|^2 dx
		= \|g\|^2_2 \langle R* g, g\rangle - \int g^2 R*g\, dx.
\ee
When $R = \delta$, the right hand side has the form $\|g\|^4_2 - \|g\|_3^3$, which, using H\"older's inequality, one easily sees is nonpositive.  
However, when $R$ is continuous, it is no longer clear that the right hand side is even nonpositive.  Again, it may not be.

Given the convolution term $R*g$ appearing in~\eqref{e.main}, a better approach is to multiply the equation by $R*g$ and integrate by parts.  Then~\eqref{e.main} becomes
\[
	\frac{1}{2} \frac{d}{dt} \langle g, R*g\rangle
		+ \frac{1}{2} \int \nabla g \cdot \nabla(R*g) dx
		= \langle R*g, g\rangle^2 - \int g (R*g)^2 dx.
\]
The right hand side is once again nonpositive.  The goal of the proof is to establish a lower bound on
\be\label{e.c1012}
	\frac{1}{2} \int \nabla g \cdot \nabla(R*g) dx
		+ \left[- \langle R*g, g\rangle^2 + \int g (R*g)^2 dx\right].
\ee
After, we then must relate $\langle g,R*g\rangle$ back to $\|g\|_2$ and $\|g\|_\infty$.

The main tool in the analysis is the local-in-time Harnack inequality (see \Cref{p.harnack}) that was first established in \cite{BHR_toads_delay}.  This inequality, which quantifies how spread out the level sets of $g(t,\cdot)$ are, allows us to show that, roughly, either $g$ is flat, in which case the bracketed terms in~\eqref{e.c1012} are large, or $g$ is not flat and the gradient term in~\eqref{e.c1012} is large.

Interestingly, while the functional inequalities used in~\cite{GuHenderson} to bound~\eqref{e.c1012} from below in the $R=\delta$ case held for {\em any} $H^1$ function, the lower bound on~\eqref{e.c1012} in the case where $R$ is continuous relies strongly on the regularizing effect of the heat equation, seen through the local-in-time Harnack inequality.  In some sense we are showing that the key functional inequality \cite[Lemma~4.3]{GuHenderson} (see also \cite[Lemma 2]{Constantin_2000}) is stable with respect to convolutions {\em as long as} the function is suitably regular.

The proofs of \Cref{t.decay} and \Cref{p.decay} are contained in \Cref{s.R_continuous}.


\subsection{Long time dynamics in one dimension}

Restricting to the case $R=\delta$, we investigate the behavior of $g$ under the $t^{2/3}$ scaling.  In particular, \Cref{t.decay} suggests that we may see non-trivial limiting behavior of $t^{2/3} g(t, x t^{2/3})$ as $t\to\infty$.  We establish that here.

In order to describe the long-time behavior, we define two important constants:
\be\label{e.cc_thc}
	\cc = \left(\frac{3}{2}\right)^{2/3}
		 =  1.31037...
		\qquad\text{and}\qquad
	\thc = \left(\frac{1}{18}\right)^{1/3}
		 =  .38157...
\ee
Notice that $2 \thc \cc = 1$.  The precise meaning of $\cc$ and $\thc$ is touched on below and then described in detail in \Cref{s.heuristic}. We now state our main results.

\begin{theorem}\label{t.strong}
	Suppose that $g$ solves~\eqref{e.main} with initial data $g_0$ satisfying~\eqref{e.g_0} and $R=\delta$.  Suppose further that $g_0$ is even and radially decreasing.  Then
		\[
			\lim_{t\to\infty} t^{2/3}\|g(t)\|^2_{2}
			= \lim_{t\to\infty} t^{2/3}\|g(t)\|_{\infty}= \thc.
		\]
	Further, for all $x \neq \pm \cc$,
		\[
			\lim_{t\to\infty} t^{2/3} g(t,x t^{2/3})
				= \thc\1_{[-\cc, \cc]}(x),
		\]
		where the limit holds uniformly for $x$  away from $\pm \cc$.
\end{theorem}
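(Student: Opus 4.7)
The plan is to work in the self-similar variables $h(\tau,y) := t^{2/3} g(t, y t^{2/3})$ with $\tau := \log t$, in which \eqref{e.main} (with $R = \delta$) reads
\[
\partial_\tau h = \tfrac{\epsilon}{2}\,\partial_y^2 h + \tfrac{2}{3}\,\partial_y(yh) + \tfrac{1}{\epsilon}\, h\bigl(\|h\|_2^2 - h\bigr), \qquad \epsilon := e^{-\tau/3},
\]
and to show $h(\tau,\cdot) \to \thc\,\1_{[-\cc,\cc]}$ as $\tau \to \infty$. First, the evenness and radial monotonicity of $g_0$ are preserved (by uniqueness of \eqref{e.main} and the strong maximum principle applied to $\partial_x g$), so $\|g(t)\|_\infty = g(t,0)$ and $\partial_x^2 g(t,0) \leq 0$. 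Evaluating \eqref{e.main} at $x=0$ and using the H\"older bound $\|g\|_2^2 \leq \|g\|_\infty\|g\|_1 = \|g\|_\infty$ gives
\[
\tfrac{d}{dt}\|g(t)\|_\infty \leq \|g\|_\infty\bigl(\|g\|_2^2 - \|g\|_\infty\bigr) \leq 0,
\]
so $\|g\|_\infty$ is decreasing. Multiplying \eqref{e.main} by $g$ and using $\|g\|_2^4 \leq \|g\|_1\|g\|_3^3 = \|g\|_3^3$ (Cauchy--Schwarz) gives the analogous monotonicity of $\|g\|_2^2$. Together with \Cref{t.decay}, both $t^{2/3}\|g\|_\infty$ and $t^{2/3}\|g\|_2^2$ are pinned between positive constants.

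Next I would show $t^{2/3}\bigl(\|g\|_\infty - \|g\|_2^2\bigr) \to 0$. If not, along some $t_n \to \infty$ the difference would exceed $\eta\,t_n^{-2/3}$; parabolic regularity bounds $|\partial_t(\|g\|_\infty - \|g\|_2^2)|$ by $\lesssim t_n^{-4/3}$, so the difference remains $\geq \eta\,t_n^{-2/3}/2$ on an interval of length of order $t_n^{2/3}$ around each $t_n$. The first differential inequality above then forces $\|g\|_\infty$ to drop by a fixed fraction on each such interval, contradicting the lower bound $\|g\|_\infty \geq c\,t^{-2/3}$ from \Cref{t.decay}. Consequently, any subsequential limit of $t^{2/3}\|g\|_\infty$ coincides with the corresponding limit of $t^{2/3}\|g\|_2^2$.

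Using the uniform $L^\infty$ bound of \Cref{p.decay}, tightness from the moment bounds in \Cref{t.decay}, and standard parabolic regularity, the family $\{h(\tau,\cdot)\}_{\tau \geq 1}$ is precompact in $L^1_{\rm loc}$. Any subsequential limit $h_\infty$ is even, radially decreasing, of mass $1$, and satisfies $\|h_\infty\|_\infty = \|h_\infty\|_2^2$ by the previous step. The singular prefactor $\epsilon^{-1} \to \infty$ multiplying the reaction $h(\|h\|_2^2 - h)$ forces any limit to take only the values $0$ and $\|h_\infty\|_2^2$; combined with monotonicity this yields $h_\infty = c\,\1_{[-a,a]}$ with $2ac = 1$.

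The final and hardest step is to show $c = \thc$ and $a = \cc$. For this I would reinterpret \eqref{e.main} as the logistic PDE $\partial_t g = \tfrac12\partial_x^2 g + \lambda(t)g - g^2$ with time-dependent rate $\lambda(t) := \|g(t)\|_2^2$. Classical KPP theory predicts that a compactly supported front spreads at instantaneous speed $\sqrt{2\lambda(t)}$, so the edge of $g$ should lie near $\int_0^t \sqrt{2\lambda(s)}\,ds$; this equals $\cc\, t^{2/3}$ precisely when $\lambda(t) \sim \thc\, t^{-2/3}$, via the identity $\tfrac{3}{2}\sqrt{2\thc}=\cc$ (equivalently $2\thc\cc=1$, as noted after \eqref{e.cc_thc}). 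Making this rigorous requires sandwiching $g$ between KPP traveling-wave sub- and super-solutions for the logistic equation with time-varying coefficient, and bootstrapping between the coarse estimate $\lambda(t) \approx t^{-2/3}$ from \Cref{t.decay} and the sharper information from the earlier steps. The main obstacle is exactly this bootstrap: the nonlocal self-consistency $\lambda(t) = \|g\|_2^2$ blocks a direct appeal to standard KPP spreading results, so an iterative argument between profile and coefficient is needed to close the loop. Once $c$ and $a$ are identified, uniform convergence of $h(\tau,y)$ on compact sets away from $\pm \cc$ follows from parabolic H\"older estimates.
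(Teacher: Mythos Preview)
Your outline has two genuine gaps.

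\textbf{The step $t^{2/3}(\|g\|_\infty - \|g\|_2^2)\to 0$ does not close.}  The asserted bound $|\partial_t(\|g\|_\infty-\|g\|_2^2)|\lesssim t^{-4/3}$ requires in particular $\|g_x\|_2^2\lesssim t^{-4/3}$ pointwise in $t$; from $\dot E_g=-\|g_x\|_2^2+2(E_g^2-\|g\|_3^3)$ one only gets this on average (indeed $\int_1^\infty\|g_x\|_2^2\,dt<\infty$ but spikes are not excluded).  More seriously, even granting the bound, the contradiction fails: on each bad interval $I_n=[t_n,t_n+ct_n^{2/3}]$ you get $t^{2/3}M_g$ dropping by a fixed factor, but between the $I_n$'s the quantity $t^{2/3}M_g$ is not monotone (only $M_g$ is), and its growth between $I_n$ and $I_{n+1}$ can be as large as $(t_{n+1}/t_n)^{2/3}$.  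Since the $t_n$ are an arbitrary subsequence, the drops can be fully compensated and no contradiction with $t^{2/3}M_g\gtrsim 1$ results.  A direct check: $\int_1^\infty M_g(M_g-E_g)\,dt\le M_g(1)<\infty$ is perfectly consistent with $M_g-E_g\equiv\eta\,t^{-2/3}$, so the differential inequality alone cannot force the gap to vanish.

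\textbf{Compactness via ``standard parabolic regularity'' is wrong reasoning.}  In your variables the diffusion coefficient is $\epsilon/2\to 0$, so parabolic estimates give no uniform $C^\alpha$ or $H^1$ control on $h(\tau,\cdot)$; the expected limit $\thc\1_{[-\cc,\cc]}$ is not even continuous.  The paper makes exactly this point when motivating its second rescaling.  (Monotonicity plus Helly would in fact give $L^1_{\rm loc}$ subsequential limits, but you invoke the wrong mechanism, and in any case this only produces $h_\infty=c\1_{[-a,a]}$ along subsequences with $(c,a)$ unidentified.)

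The paper avoids both issues by working with $\tau\sim t^{1/3}$ rather than $\log t$, then introducing a \emph{second} rescaling $u_c(\tau,z)=h(\tau,c+z/(\tau+1))$ that zooms into the transition layer so the equation becomes a genuine (non-degenerate) Fisher--KPP problem.  The constant $\thc$ is identified directly, not after proving $M_h-E_h\to0$: the key inputs are a one-sided slow-growth estimate $E_h(\tau')\le(\frac{\tau'+1}{\tau+1})^2E_h(\tau)$ (which replaces your two-sided regularity bound), explicit KPP sub/super-solutions for $u_c$ that pin the front location, and a quantitative H\"older-stability lemma that converts $M_h\approx E_h$ into the indicator profile without compactness.
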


Informally, \Cref{t.strong} implies that, for large $t$, we may write
\[
	g(t,x)
		 =  \frac{\thc}{t^{2/3}} \1_{[-\cc t^{2/3}, \cc t^{2/3}]}(x) + o(t^{-2/3}).
\]
It is clear that this is consistent with the results in \Cref{t.decay}.

We note that the convergence of the (rescaled) $L^2$- and $L^\infty$-norms is {\em almost} equivalent to the convergence of the profile.  Indeed, since $g$ is a probability density, its $L^2$- and $L^\infty$-norms can only be equal when the H\"older's inequality is an equality, which corresponds to functions that are a multiplicative constant of an indicator function.  Using then the symmetry and monotonicity of $g$, it follows that $g$ converges to the indicator function of an interval up to a multiplicative constant (though we note that this does not yield the exact constants $\cc$ and $\thc$).  Of course, we never have exact equality and thus, the above heuristics hinge on understanding the ``stability'' of H\"older's inequality.

We believe that the assumption that $g_0$ is even and decreasing is purely technical and can be removed at the expense of a significantly more involved proof.  As the proof is already quite complicated even with these assumptions, we opt to use them.  Throughout the proof of \Cref{t.strong}, we indicate where and how these assumptions are applied.

A key step in the proof of \Cref{t.strong} are the following bounds, which do not require these symmetry and monotonicity assumptions.

\begin{proposition}\label{p.weak}
	Suppose that $g$ solves~\eqref{e.main} with initial data $g_0$ satisfying~\eqref{e.g_0}.   Then
	\begin{enumerate}[(i)]
		\item  $\displaystyle \liminf_{t\to\infty} t^{2/3}\|g(t)\|_{2}^2, \ \liminf_{t\to\infty} t^{2/3}\|g(t)\|_{\infty} \leq \thc$;
		\item  $\displaystyle \thc \leq \limsup_{t\to\infty} t^{2/3}\|g(t)\|_{2}^2,\ \limsup_{t\to\infty}  t^{2/3}\|g(t)\|_{\infty}$.
	\end{enumerate}
\end{proposition}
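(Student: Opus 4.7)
Write $M(t) := \|g(t)\|_\infty$, $N(t) := \|g(t)\|_2^2$, and $V(t) := \int x^2 g(t,x) \dx$. The plan rests on three ingredients: monotonicity of $M$ and $N$; the pointwise bound $N \leq M$; and the sharp variance-vs-$L^\infty$ inequality for probability densities.

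First I would establish monotonicity. Since $g \geq 0$ and $\int g = 1$, one has $N \leq M$. At a spatial maximum $x^*$ of $g(t,\cdot)$, $\Delta g(t,x^*) \leq 0$, so \eqref{e.main} yields $\partial_t g(t,x^*) \leq M(N-M) \leq 0$; hence $M$ is non-increasing. Multiplying \eqref{e.main} by $g$ and integrating by parts,
\[
\tfrac{1}{2}N'(t) = -\tfrac{1}{2}\|\partial_x g\|_2^2 - (\|g\|_3^3 - N^2) \leq 0,
\]
using $\|g\|_3^3 \geq N^2$ by Cauchy--Schwarz applied to $g^{1/2}\cdot g^{3/2}$. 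Because $N \leq M$, $\liminf t^{2/3} N \leq \liminf t^{2/3} M$ and $\limsup t^{2/3} N \leq \limsup t^{2/3} M$, and the proposition reduces to proving the two one-sided statements $\limsup t^{2/3} N \geq \thc$ (for (ii)) and $\liminf t^{2/3} M \leq \thc$ (for (i)).

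Next I would establish the variance inequality by symmetric decreasing rearrangement: for any probability density $h \geq 0$ on $\R$ with $\|h\|_\infty = m$ and mean $\bar x$, $\int (x-\bar x)^2 h \dx \geq 1/(12 m^2)$, with equality iff $h = m\1_{[\bar x - 1/(2m),\bar x + 1/(2m)]}$. The algebraic identity $1/(12 \thc^2) = \cc^2/3$, equivalent to $2\thc\cc = 1$, identifies the extremum with the conjectured profile $\thc t^{-2/3}\1_{[-\cc t^{2/3}, \cc t^{2/3}]}$. Applied pointwise together with $\Var g \leq V$, this gives $t^{2/3} M(t) \geq 1/(2\sqrt{3 V(t)/t^{4/3}})$, so $\limsup t^{2/3} M \geq 1/(2\sqrt{3 \liminf V/t^{4/3}})$; consequently $\limsup t^{2/3} M \geq \thc$ follows from the upper bound $\liminf V/t^{4/3} \leq \cc^2/3$. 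To upgrade from $M$ to $N$ for (ii), I would use a quantitative stability analysis of the variance inequality: at a sequence of times $t_n$ realizing $V(t_n)/t_n^{4/3} \to \liminf V/t^{4/3}$, near-saturation of the variance inequality forces $g(t_n,\cdot)$ to be close (e.g.\ in $L^2(g\dx)$) to the indicator extremum, from which $N(t_n) \approx M(t_n) \approx \thc/t_n^{2/3}$ follows.

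The required bounds on $V$ come from the virial identity $V'(t) = 1 + NV - \int x^2 g^2 \dx$ (obtained by multiplying \eqref{e.main} by $x^2$), together with $\int x^2 g^2 \leq MV$. The main obstacle is pinning down the constant $\cc^2/3$: for exact indicator profiles $\int x^2 g^2 = NV$, so the virial degenerates to $V' = 1$, inconsistent with the expected growth $V \sim t^{4/3}$ (which requires $V' \sim t^{1/3}$). The missing contribution comes from a boundary layer of width $\sim t^{1/3}$ supporting the transition of $g$ from the value $N$ down to $0$; its size is controlled through the dissipation identity $\|g\|_3^3 - N^2 = \int g(g-N)^2 \dx$ from Step~1 together with its finite time integral $\int_T^\infty [\|\partial_x g\|_2^2 + 2\int g(g-N)^2 \dx]\,dt = N(T) \lesssim T^{-2/3}$ obtained by integrating $N'$. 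For (i), since the variance inequality is one-sided (yielding only lower bounds on $M$), reaching $\liminf t^{2/3} M \leq \thc$ requires combining the complementary bound $\limsup V/t^{4/3} \geq \cc^2/3$ with a ``near-indicator'' stability step: at times where $g$ is close to the extremal indicator, the variance inequality nearly saturates and $M \approx 1/(2\sqrt{3V})$, so $\limsup V/t^{4/3} \geq \cc^2/3$ translates into $\liminf t^{2/3} M \leq \thc$, closing the argument.
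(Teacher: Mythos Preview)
Your ingredients are all correct: $M$ and $N$ are non-increasing, $N\le M$, the sharp variance inequality $V\ge 1/(12M^2)$ holds for any probability density, and the virial identity $V'=1+NV-\int x^2g^2\,dx$ is valid. But the argument has a genuine gap exactly where you flag it: you never establish the sharp bounds $\liminf V/t^{4/3}\le \cc^2/3$ and $\limsup V/t^{4/3}\ge \cc^2/3$ on which both (i) and (ii) rest. The virial identity by itself cannot deliver these. Writing $NV-\int x^2g^2=\int x^2 g(N-g)\,dx$ and controlling it via Cauchy--Schwarz together with the dissipation bound $\int_T^\infty\int g(g-N)^2\,dx\,dt\lesssim T^{-2/3}$ gives only $|NV-\int x^2g^2|\lesssim t^{1/2}$ along a subsequence, hence $V'=1+O(t^{1/2})$, which is far too crude to isolate the constant $\cc^2/3$. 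Your proposed remedy, a ``near-indicator stability'' step forcing $g$ close to $\thc\1_{[-\cc t^{2/3},\cc t^{2/3}]}$ at well-chosen times, is essentially the content of the stronger \Cref{t.strong}; invoking it here would be circular, since \Cref{p.weak} is a key input to that theorem.

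The paper takes a completely different route that produces the constant $\thc$ directly, without any variance or virial analysis. One passes to self-similar variables $h(\tau,y)=(\tau+1)^2 g((\tau+1)^3, y(\tau+1)^2)$ and then to a moving-frame variable $u_c(\tau,z)=h(\tau,c+z/(\tau+1))$, obtaining a Fisher--KPP type equation $\partial_\tau u_c=\tfrac32\Delta u_c+3u_c(E_h-u_c)+2c\partial_z u_c+o(1)$. The pair $(\cc,\thc)$ is singled out by the minimal wave-speed relation $c=2\sqrt{(3/4)(3\theta/2)}$ together with the mass constraint $2\theta c=1$. Part (ii) is then a supersolution argument: if $\limsup E_h<\thc$, the exponential $Ae^{-(2\cc/3)z}$ is a supersolution for $u_{\cc}$, forcing $\int h\,dy<1$. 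Part (i) is a subsolution argument (\Cref{p.growing_u_c}): if $\liminf E_h>\thc$, Fisher--KPP subsolutions push $h$ above $\thc+\eps$ on all of $[-\cc,\cc]$, forcing $\int h\,dy>1$. The constant emerges from front-speed considerations, not from moment identities; this is why the paper's proof closes and yours does not.
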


The results in \Cref{p.weak} show that $t^{2/3}\|g(t)\|_{\infty}$ and $t^{2/3}\|g(t)\|^2_{2}$ get arbitrarily close to $\thc$ infinitely often; however, they do not rule out the possibility that these quantities make non-trivial oscillations around $\thc$.  We recall the discussion following \Cref{t.strong} that indicates the importance of the $L^2$- and $L^\infty$-norms in understanding the profile of $g$.

The intuition behind the constants $\thc$ and $\cc$ comes from various rescalings of the equation in which a non-local Fisher-KPP type equation arises.  Finding and heuristically interpreting the correct rescalings is a subtle issue and so is covered in detail in \Cref{s.heuristic}.  A discussion of the strategy of the proof and the major difficulties encountered is also contained there as it is best placed in the setting of the rescaled equations.

We note that a major difficulty in extending \Cref{t.strong} to the case when $R\neq \delta$ is that the comparison principle no longer holds for~\eqref{e.main} when $R\neq \delta$.

%
%
%

The proofs of \Cref{p.weak} and \Cref{t.strong} are contained in \Cref{s.weak} and \Cref{s.strong} respectively.

\subsection{Long time dynamics in higher dimensions}

We now discuss the behavior in higher dimensions.  Our first result is about the time decay of the moments of $g$, as well as the $L^2$- and $L^\infty$-norms of $g$.

\begin{theorem}\label{t.hd_moments}
	Suppose $d\geq 2$, $R = \delta$ or $R$ satisfies~\eqref{e.R}, and $g$ solves~\eqref{e.main} with initial data $g_0$ satisfying~\eqref{e.g_0}.
	Then, for any $p>0$
	\[
		\left(\int_\R |x|^p g(t,x) dx\right)^{1/p}
			\approx t^\frac{1}{2}
				\qquad\text{ for all } t\geq 1.
	\]
	In addition, we have
	\[
		\|g(t)\|_\infty, \|g(t)\|_2^2
			\approx t^{-\frac{d}{2}}
				\qquad \text{ for all } t \geq 1.
	\]
\end{theorem}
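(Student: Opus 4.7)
The plan is to first establish an $L^2$-type decay for $\langle R*g,g\rangle$ and then leverage a Cole--Hopf comparison to extract $L^\infty$ and moment bounds. Using the factorization $R=\phi*\phi$ from \eqref{e.R}, set $h:=\phi*g$, which is a probability density satisfying $\|h\|_1=1$ and $\langle R*g,g\rangle=\|h\|_2^2$. Multiplying \eqref{e.main} by $R*g$ and integrating by parts, the nonlinear contribution becomes $\langle R*g,g\rangle^2-\int g(R*g)^2\,dx$, which is nonpositive by Cauchy--Schwarz, while the Dirichlet form equals $\|\nabla h\|_2^2$. This yields
\[
\frac{d}{dt}\|h\|_2^2+\|\nabla h\|_2^2\le 0.
\]
Nash's inequality $\|h\|_2^{2(d+2)/d}\lesssim\|\nabla h\|_2^2$ (valid since $\|h\|_1=1$) then closes the ODE to give $\langle R*g,g\rangle=\|h\|_2^2\lesssim t^{-d/2}$. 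The same argument applied directly to $g$ handles the case $R=\delta$.

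Next, introduce $u:=g\,e^{-M(t)}$ with $M(t):=\int_0^t\langle R*g,g\rangle(s)\,ds$, which satisfies $\partial_t u=\tfrac12\Delta u-u\,(R*g)$. Since $u,R*g\ge 0$, the parabolic maximum principle gives $u\le e^{t\Delta/2}g_0$, so
\[
g(t,x)\le e^{M(t)}\,(e^{t\Delta/2}g_0)(x).
\]
For $d\ge 3$, the bound $\|h\|_2^2\lesssim s^{-d/2}$ is integrable at infinity, hence $M(t)\le M_\infty<\infty$; combined with the Gaussian heat-kernel estimate $(e^{t\Delta/2}g_0)(x)\lesssim t^{-d/2}e^{-c|x|^2/t}$ (valid because $g_0$ has compact support), one immediately obtains $\|g\|_\infty,\|g\|_2^2\lesssim t^{-d/2}$ and $\int|x|^p g\,dx\lesssim t^{p/2}$ by integration.

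The main obstacle is $d=2$, where only $M(t)\lesssim\log t$ is available, so the pointwise comparison degrades by a polynomial factor. To handle this case I would instead derive $\|g\|_2^2\lesssim t^{-1}$ by a differential inequality: the energy identity gives $\frac{d}{dt}\|g\|_2^2+\|\nabla g\|_2^2\le 2\|g\|_2^2\langle R*g,g\rangle\lesssim\|g\|_2^2/t$, and Nash applied to $g$ yields $\|\nabla g\|_2^2\gtrsim\|g\|_2^4$, so $H(t):=\|g\|_2^2$ obeys $H'+cH^2\lesssim H/t$; the supersolution $K/t$ for $K$ sufficiently large then gives $H\le K/t$. The $L^\infty$ bound follows from the Duhamel representation
\[
g(t)=e^{(t-s_0)\Delta/2}g(s_0)+\int_{s_0}^t e^{(t-s)\Delta/2}(gF)(s)\,ds,\qquad F:=\langle R*g,g\rangle-R*g,
\]
with $s_0=(1-\delta)t$: the $L^2\to L^\infty$ heat smoothing controls the first term by $C(\delta)/t$, while $\|F\|_\infty\le 2\|g\|_\infty$ and $\|gF\|_2\lesssim\|g\|_\infty\|g\|_2$ give, via the same $L^2\to L^\infty$ estimate, an integral contribution proportional to $\|g\|_\infty/t$ with a small prefactor when $\delta$ is small; this closes a bootstrap yielding $\|g\|_\infty\lesssim t^{-1}$.

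Finally, the lower bounds are elementary consequences of the unit-mass constraint $\int g=1$: Chebyshev together with the upper bound on $\int|x|^2 g$ gives $\|g\|_\infty,\|g\|_2^2\gtrsim t^{-d/2}$, while the upper bound on $\|g\|_\infty$ forces nontrivial mass on the scale $|x|\gtrsim t^{1/2}$, yielding $\int|x|^p g\,dx\gtrsim t^{p/2}$. The moment upper bound in $d=2$ can be obtained from the identity $\partial_t m_p=\tfrac{p(p+d-2)}{2}m_{p-2}+\int|x|^p gF\,dx$, where the nonlinear term is bounded by $C\|g\|_\infty m_p\lesssim m_p/t$; an induction on $p$ together with the integrating-factor argument (as for the $H$-equation above), supplemented by H\"older's inequality to cover the range of small $p$, then gives $m_p\lesssim t^{p/2}$.
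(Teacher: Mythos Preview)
Your decay $E_g=\langle R*g,g\rangle=\|\phi*g\|_2^2\lesssim t^{-d/2}$ via Nash, and the $d\ge 3$ case via the integrating-factor comparison $g\le e^{M(t)}e^{t\Delta/2}g_0$, match the paper's argument exactly. For $d=2$, your routes to the $L^2$ and $L^\infty$ upper bounds are genuinely different from the paper's and they are correct: the inequality $H'+cH^2\le CH/t$ (with $H=\|g\|_2^2$) does admit $K/t$ as a supersolution for $K$ large, and the Duhamel bootstrap with $s_0=(1-\delta)t$ closes because the $L^2\to L^\infty$ heat smoothing gains only $(t-s)^{-1/2}$ in two dimensions, making the prefactor on $\sup sM_g(s)$ of order $\sqrt\delta$. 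This is a clean alternative to the paper's supersolution.

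The gap is your moment \emph{upper} bound when $d=2$. From $m_p'\le \tfrac{p(p+d-2)}{2}m_{p-2}+(C/t)\,m_p$ the integrating factor is $t^{-C}$, where $C$ is the constant in $E_g\le C/t$ (essentially the reciprocal of the Nash constant), over which you have no control. The induction then gives only $m_{2k}\lesssim t^{\max(k,\,C+k-1)}$; when $C>1$ this never reaches $t^k$, and H\"older interpolation from large $P$ recovers at best $m_p\lesssim t^{p/2+\varepsilon}$, not $m_p\lesssim t^{p/2}$. There is no quadratic damping term here to play the role that $cH^2$ played in your $L^2$ argument, so the analogy you invoke (``as for the $H$-equation above'') does not transfer. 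Since your lower bounds on $\|g\|_\infty$ and $\|g\|_2^2$ go through Chebyshev using the sharp $m_2\lesssim t$, those are gapped as well.

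The paper closes this differently: it constructs the Gaussian barrier $\overline g=\frac{A}{t+1}e^{-|x|^2/(2A(t+1))}$ and, at a hypothetical first touching point $(t_0,x_0)$, invokes a Harnack-type \emph{lower} bound $R*g(t_0,x_0)\gtrsim \overline g(t_0,x_0)^2/M_{\overline g}(t_0)$ (cf.\ \Cref{l.maxima_guw}). This lower bound comes from the very term $-g(R*g)$ that your Cole--Hopf comparison throws away, and it is precisely what compensates for the borderline non-integrability of $E_g\sim 1/t$ in $d=2$. Once the pointwise Gaussian bound $g\lesssim t^{-1}e^{-|x|^2/(At)}$ is in hand, $m_p\lesssim t^{p/2}$ and the remaining lower bounds follow by direct integration.
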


The proof of \Cref{t.hd_moments} uses classical techniques based on the Nash inequality, which, in its original form relates the $L^1$, $L^2$, and $\dot H^1$ norms of a function, in order to bound $\langle g, R*g\rangle$.  In fact, we slightly extend the Nash inequality to apply to the quantities $\langle g, R*g\rangle$ and $\langle \nabla g, R*\nabla g\rangle$ in place of the $L^2$ and $\dot H^1$ norms, though the proof is analogous to the usual one.  To understand why these convolved macroscopic quantities are more useful than the $L^2$ and $\dot H^1$ norms, we refer to the discussion around~\eqref{e.c1011} and~\eqref{e.c1012}.

To bootstrap the bound on $\langle g, R*g\rangle$ to one on the moments of $g$, we construct a barrier function $\overline g$ such that, if $g$ and $\overline g$ were to touch, at the touching point
\[
	\partial_t \overline g - \frac{1}{2} \Delta \overline g
		- \overline g \left( \langle g, R*g\rangle - R*g\right)
		> 0,
\]
which rules out any touching points.

We now investigate the self-similar behavior of $g$ for large times.  We show that, when $d\geq 3$, the nonlinear terms are asymptotically negligible and $g$ has the same Gaussian behavior as the usual heat equation.  On the other hand, when $d=2$, we show that this need not be true by constructing solutions of~\eqref{e.main} that do not have a Gaussian profile in rescaled variables.

\begin{theorem}\label{t.hd_steady}
Suppose that $R = \delta$.
\begin{enumerate}[(i)]
	\item If $d\geq 3$, $g$ solves~\eqref{e.main} with $g_0$ satisfying $0\leq g_0(x) \leq A e^{-|x|^2/ B}$ for some $A, B > 0$ and all $x\in\R^d$ and $\int g_0\, dx = 1$, then
		\[
			\limsup_{t\to\infty}
				t^{r_d} \left\|t^{d/2} g(t, x \sqrt t) - \frac{1}{(2\pi)^{d/2}} e^{- \frac{|x|^2}{2}}\right\|_\infty
				<\infty
		\]
		where $r_d = 1$ if $d>3$ and $r_d$ is any element of $(0,1)$ if $d=3$.
	\item If $d=2$, there exists $0 \leq G \in C^\infty(\R^2)$ such that, letting
	\[
		g(t,x)	
			= \frac{1}{t} G\left(\frac{x}{\sqrt t}\right),
	\]
	the following holds: $G$ is not a Gaussian (that is, $G(x) \neq e^{-|x|^2/2\sigma^2}/(2\pi\sigma^2)$ for any $\sigma>0$), $g$ solves~\eqref{e.main}, there exist $A,B>0$ such that $G(x) \leq A e^{-|x|^2/B}$ for all $x \in \R^2$, and
	\[
		\int g(t,x) \dx
			= \int G(x) \dx
			= 1
			\qquad\text{for all } t>0.
	\]
\end{enumerate}
\end{theorem}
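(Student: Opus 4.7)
The plan is to treat part (i) as a perturbation of the linear heat flow via Duhamel's formula, exploiting the zero-mean cancellation in the nonlinear source, and part (ii) by constructing a nontrivial stationary profile of the self-similarly rescaled equation.

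For part (i), write $g = \bar g + \varphi$ where $\bar g$ solves the linear heat equation with initial data $g_0$, so that $\varphi$ satisfies the heat equation with forcing $h := g(\|g\|_2^2 - g)$ and $\varphi(0) = 0$, giving
\begin{equation*}
\varphi(t) = \int_0^t e^{(t-s)\Delta/2} h(s)\, ds.
\end{equation*}
The crucial property is $\int h(s,x)\, dx = 0$, which upgrades the naive $L^1\to L^\infty$ heat-kernel estimate to one involving first moments of $h$: via $G_{t-s}(x-y) - G_{t-s}(x) = O(|y|(t-s)^{-(d+1)/2})$ in the regime $|y|\lesssim \sqrt{t-s}$, while the near-singular regime $s\to t$ is handled by $\|h(s)\|_\infty$. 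The bounds from \Cref{t.hd_moments}, namely $\|g(s)\|_\infty + \|g(s)\|_2^2 \lesssim s^{-d/2}$ together with $\int|y|g(s,y)\,dy \lesssim s^{1/2}$, imply $\int|y||h(s,y)|\,dy \lesssim s^{(1-d)/2}$ and $\|h(s)\|_\infty \lesssim s^{-d}$. Splitting the Duhamel integral at $s = t/2$ and combining these ingredients produces $\|\varphi(t)\|_\infty \lesssim t^{-d/2 - r_d}$. The residual term $\bar g - \bar G$, where $\bar G(t,x) := (2\pi t)^{-d/2} e^{-|x|^2/2t}$, is controlled by Taylor-expanding the Gaussian heat kernel about the origin and using finiteness of the first two moments of $g_0$, giving the same rate of convergence in self-similar variables.

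For part (ii), the self-similar ansatz $g(t,x) = t^{-1} G(x/\sqrt t)$ in $d=2$ reduces \eqref{e.main} to the nonlocal elliptic equation
\begin{equation*}
\tfrac12 \Delta G + \tfrac12 y\cdot\nabla G + G\bigl(1 + \|G\|_2^2 - G\bigr) = 0, \qquad G\geq 0, \quad \int G\,dy = 1.
\end{equation*}
I would construct a radially symmetric profile $G(y) = \rho(|y|)$, reducing the PDE to the ODE
\begin{equation*}
\rho'' + \Bigl(\tfrac{1}{r}+r\Bigr)\rho' + 2\rho(1+\mu-\rho) = 0, \qquad \rho(0) = \rho_0,\ \rho'(0) = 0,
\end{equation*}
parameterized by $(\rho_0,\mu)$, coupled with the self-consistency $\mu = 2\pi\int_0^\infty \rho^2\,r\,dr$ and the normalization $2\pi\int_0^\infty \rho\,r\,dr = 1$. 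A two-parameter shooting/continuation argument in $(\rho_0,\mu)$ selects values producing a positive, Gaussian-decaying $\rho$ satisfying both constraints. Smoothness of $G$ then follows from elliptic regularity; the Gaussian upper bound $G(y)\leq A e^{-|y|^2/B}$ follows by comparison with a supersolution of the linearization at infinity; and non-Gaussianity is automatic, because substituting any $(2\pi\sigma^2)^{-1}e^{-|y|^2/2\sigma^2}$ into the elliptic equation forces incompatible relations among the $r^2$, constant, and $\rho$-linear coefficients.

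The main obstacle in (i) is extracting the precise rate $r_d$: the direct $L^1\to L^\infty$ heat-kernel estimate on the Duhamel integral is too weak, so one must combine the zero-mean cancellation in $h$ with the weighted $L^1$ bounds on $h$ propagated from \Cref{t.hd_moments}, keeping careful track of the near-singular regime $s\to t$ and the dimension-dependent crossover in the split time integral. In (ii) the chief difficulty is the nonlocal dependence of the equation on $\mu = \|G\|_2^2$, which obstructs a purely local ODE analysis and necessitates a topological/continuation argument in $(\rho_0,\mu)$ to close the self-consistency loop simultaneously with the mass normalization.
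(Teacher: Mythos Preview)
For part (i) your Duhamel approach is a valid alternative and should give the same rate as the paper's argument. The paper works instead in self-similar variables $G(\tau,y)=e^{d\tau/2}g(e^\tau,e^{\tau/2}y)$ and sets $W=G/\psi_0$ with $\psi_0=(2\pi)^{-d/4}e^{-|y|^2/4}$, obtaining $W_\tau=-MW+e^{-(d-2)\tau/2}W(\|G\|_2^2-G)$ for the Hermite operator $M=-\tfrac12\Delta+\tfrac{|y|^2}{8}-\tfrac{d}{4}$. Mass conservation gives $\langle W-\psi_0,\psi_0\rangle=0$, so the spectral gap of $M$ on $\psi_0^\perp$ yields the energy inequality $\partial_\tau\|W_\perp\|_2^2+\|W_\perp\|_2^2\lesssim e^{-(d-2)\tau/2}\|W_\perp\|_2$, and the rate follows after upgrading to $L^\infty$ by parabolic regularity. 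Your route trades this spectral structure for explicit heat-kernel bounds exploiting the zero mean of $h$; it is more elementary but needs the uniform-in-time Gaussian tails on $g$ (available from the proof of \Cref{t.hd_moments}) to control the region $|y|\gtrsim\sqrt{t-s}$ where the first-order Taylor bound on the kernel fails, a point you do not address.

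For part (ii) there is a genuine gap. First, a simplification you miss: integrating the stationary equation over $\R^2$ and using $\int G=1$ forces $\mu=\|G\|_2^2$ automatically, so the self-consistency is not an independent constraint and the problem is really one-parameter, not two. More seriously, the sentence ``a two-parameter shooting/continuation argument \dots\ selects values producing a positive, Gaussian-decaying $\rho$'' is where all the content lies and you supply none of it: for generic $(\rho_0,\mu)$ the ODE solution will change sign or fail to decay, and you give no mechanism isolating the admissible set nor any argument forcing the constraints to close within it. The paper avoids direct ODE analysis entirely. It freezes $\|G\|_2^2$ as a parameter $E$ and, on each ball $B_R$, produces the unique positive radial solution $G_{E,R}$ of $\tfrac12\Delta G+\tfrac{y}{2}\cdot\nabla G+G(1+E-G)=0$ with Dirichlet data by evolving an explicit subsolution under the associated parabolic flow (so positivity and the bound $G_{E,R}<1+E$ come from the maximum principle, not from shooting). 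An $R$-uniform Gaussian upper barrier then shows $\int_{B_R}G_{E,R}$ is continuous in $E$, small for small $E$ and $\gtrsim E$ for large $E$, so some $\mathcal E_R$ gives mass one. Integrating the equation over $B_R$ gives $\mathcal E_R-\|G_R\|_2^2=-\tfrac12\int_{\partial B_R}\partial_\nu G_R\to 0$ by the decay estimate, so the $R\to\infty$ limit closes the nonlocal coupling. Your closing check that no Gaussian solves the stationary equation is correct and supplies a detail the paper leaves implicit.
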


The first step in proving \Cref{t.hd_steady} is to convert to the self-similar coordinates suggested by its statement.  Letting $\tilde g$ be $g$ in these new coordinates $(\tau,y)=(\log t,\tfrac{x}{\sqrt{t}})$, and $\tilde{g}(\tau,y)
		= e^{\frac{d}{2}\tau} g\left(e^\tau, e^{\tau/2} y\right)$, we see that
\be\label{e.c1084}
	\partial_\tau \tilde g
		= \left[\frac{1}{2}\Delta \tilde g
			+ \frac{y}{2} \cdot \nabla \tilde g
			+ \frac{d}{2} \tilde g \right]
			+ e^{-\frac{d-2}{2} \tau} \tilde g \left( \|\tilde g\|_{2}^2 - \tilde g\right).
\ee
The difference between $d=2$ and $d\geq 3$ is clear from the above equation.  When $d\geq 3$, the last term is exponentially decaying and we proceed by analyzing the spectrum of the operator in brackets, which is well-understood.

When $d=2$, the last term is non-negligible.  The construction then proceeds by finding a steady solution of~\eqref{e.c1084}.  To begin, we pose the problem on a ball of radius $r$ and examine the local (and slightly less nonlinear) problem where $\|\tilde g\|_{2}^2$ is replaced by a constant $E$.  After finding a solution $\tilde g_E$ to this problem, we show that there is a critical value of $E$ where $\|\tilde g_E\|_{2}^2$ is equal to $E$.  A difficulty with this is that the dependence of $\tilde g$ on $E$ is monotonic; that is, the larger $E$ is, the larger $\tilde g_E$ is.  Hence, it is difficult to simply look at small and large $E$ and show that the ordering of $E$ and $\|\tilde g_E\|_{2}^2$ switches.  We overcome this by showing that the operator in brackets in~\eqref{e.c1084} induces sufficient decay away from $x=0$ to limit the growth of $\|\tilde g_E\|_{2}^2$ as $E$ is increased.  After finding this critical $E$ value, the proof is concluded by taking $r\to\infty$.

We make two comments on the limitations of \Cref{t.hd_steady}.  First, we do not handle the case when $R$ is continuous.  In \Cref{t.hd_steady} (i), it is trivial to extend our proof to that case since the nonlinear terms are exponentially decaying in the self-similar variables.  We believe that \Cref{t.hd_steady} (ii) can also be extended to the case when $R$ is continuous albeit with more technical proofs using elliptic regularity theory.  However, our construction of $G$ is already quite involved  and we opt instead for a clearer, more succinct construction.

Second, we do not address the stability of $G$; that is, we do not have a convergence result of $\tilde g$ to $G$ when $d=2$ as we do in \Cref{t.hd_steady} (i) with the Gaussian.  The spectral theory based argument of part (i) does not apply to the stability of $G$ when $d=2$ because, as shown in~\eqref{e.c1084}, the nonlinear term plays a crucial role in the equation, thus, any stability result must use the nonlinearity in an essential way.   The initial difficulty in establishing the stability of $G$ is that $\|G\|_{2}$ is unknown and, it is not clear why $\|\tilde g(\tau)\|_{2}$ converges to a constant (and that that constant is $\|G\|_{2}$).  Indeed, multiplying~\eqref{e.c1084} by $\tilde g$ and integrating by parts yields
\[
	\frac{1}{2} \frac{d}{d\tau} \|\tilde g(\tau)\|_{2}^2
		= - \frac{1}{2} \|\nabla \tilde g(\tau)\|_{2}^2
			+ \frac{1}{2} \|\tilde g(\tau)\|_{2}^2
			+ \|\tilde g(\tau)\|_{2}^4
			- \|\tilde g(\tau)\|_{3}^3.
\]
Unfortunately, it is not clear that this would yield convergence of the $L^2$-norm of $\tilde g$.  For example, there is no obvious monotonicity of $\|\tilde g(\tau)\|_2$ imparted by the equation above.  Until a better understanding of the fluctuations or convergence of $\|\tilde g(\tau)\|_{2}$ is gained, stability remains open, although we believe that $G$ is stable.

The proofs of \Cref{t.hd_moments} and \Cref{t.hd_steady} are contained in \Cref{s.higher_dimensions}.

\subsubsection*{Connection (or lack thereof) with the nonlocal Fisher-KPP equation}

One might think that~\eqref{e.main} appears, on its face, to be similar to the non-local Fisher-KPP  that has considered in a huge number of works (e.g.,~\cite{ABVV, BerestyckiNadinPerthameRyzhik, Britton, HamelRyzhik} and articles referencing these):
\be\label{e.nlfkpp}
	\partial_\tau u
		= D \Delta u + u(r - \phi * u),
\ee
for some non-negative function $\phi$ and constants $D, r>0$.  However, there is very little connection one can draw between them.

Most obvious is the fact that their qualitative behavior is extremely different.  For example, the $L^1$ norm of solutions $g$ of~\eqref{e.main} is conserved, while there are no conserved quantities for solutions $u$ of~\eqref{e.nlfkpp}.  From a larger perspective, one sees that the main questions regarding each model are very different: \eqref{e.nlfkpp} is a model for front propagation leading to questions about the existence, stability, and qualitative properties of traveling wave solutions (solutions of~\eqref{e.nlfkpp} made up of a fixed profile in $x$ being translated at a constant speed), while the main questions for~\eqref{e.main} are about quantitative $L^p$ and moment bounds as well as self-similar behavior.

Finally, the mathematical techniques applied to each are necessarily unrelated.  As with the standard (local) Fisher-KPP equation, the basic behavior of solutions $u$ of~\eqref{e.nlfkpp} can be obtained by linearizing the equation around $0$, in which case the nonlocal term disappears.  In this sense, the large scale features of $u$ are {\em linearly determined}.  On the other hand, a linearization of~\eqref{e.main} around zero yields the heat equation, which does not yield the conclusions of the main theorems \Cref{t.decay} and \Cref{t.strong}.  In this sense, the dominant behavior of~\eqref{e.main} is {\em nonlinearly determined}.

\subsection{Notation} \label{s.notation}
Throughout the manuscript we use the notation $\lesssim$ for the following: $A \lesssim B$ if there exists $C>0$ such that $A \leq C B$, where $C$ is any constant that does not depend on $g$ (except for possibly on $\supp(g_0)$).  We write $A\approx B$ to mean that $A\lesssim B$ and $B \lesssim A$.

All $L^p$ norms are taken with respect to the spatial variable only unless explicitly indicated.  Hence, $\|g\|_{p}$ and $\|g(t)\|_{p}$ both refer to
\[
	\left( \int_{\R^d} g(t,x)^p\, dx \right)^{1/p},
\]
when $p \in [1,\infty)$.  The analogous notation is used when $p=\infty$.

Various quantities play a special role in our analysis.  We adopt the following notation: for any measurable function $f:[0,\infty)\times \R^d\to [0,\infty)$, we denote
\be\label{e.macro_notation}
	\begin{split}
		&E_f(t) = \langle f(t), R*f(t)\rangle
			= \|\phi * f(t)\|_2^2,\\
		&D_f(t) = \langle \nabla f(t), \nabla R*f(t)\rangle
			= \|\nabla \phi * f(t)\|_2^2,
		\quad\text{ and}\\
		&M_f(t) = \|f(t)\|_\infty,
	\end{split}
\ee
where we used~\eqref{e.R} to get the relationship between the first and second characterizations of $E_f$ and $D_f$ (this also uses that convolutions can pass between functions in the $L^2$-inner product; see \eqref{e.convolution_identity}).  We point out that, when $R=\delta$, $E_f=\|f(t)\|_2^2$ and $D_f = \|\nabla f(t)\|_2^2$.

When writing $\lim$, $\liminf$, and $\limsup$, we often omit the notation regarding the variable when no confusion will arise.  For example, the conclusion of \Cref{p.weak} (i) can be written
\[
	\liminf t^{2/3} \|g\|_{2}^2
		= \liminf t^{2/3} \|g\|_{\infty} \leq \thc.
\]


We use $B_r(x)$ to mean a ball of radius $r>0$ centered at $x$ in the spatial variables.  When the ball is centered at the origin, we simply write $B_r$ in place of $B_r(0)$.

\section{The $2/3$ power law when $R$ is continuous.}\label{s.R_continuous}


%
%
%
%
%

Before beginning the proof, we notice that, due to H\"older's inequality
\begin{equation}\label{e.c2274}
	E_g \leq M_g,
\end{equation}
since $\int R*g  = 1$, which comes from the assumption $\int R=1$ and the fact that $\int g=1$.  In addition, by Young's inequality for convolutions,
\be\label{e.c6304}
	E_g
		\leq (\|g\|_2 \|\phi\|_1)^2
		= \|g\|_2^2.
\ee

A key aspect of the proof is understanding the precise relationship between $g$, $\phi * g$, and $R*g$.  As such, it is useful to define more succinct notation for the latter two functions.  Let
\be\label{e.uw}
	u
		= \phi * g
	\quad\text{ and }\quad
	w = \phi * \phi * g = R*g.
\ee


We first show how to deduce \Cref{t.decay} from \Cref{p.decay} in the following subsection.  Afterwards, in \Cref{s.E_g_M_g_decay} and the following subsections, we prove \Cref{p.decay}.  This is where the bulk of the work is undertaken.

We note that, even when not explicitly mentioned, we assume that $d=1$ and $R$ is continuous and satisfies~\eqref{e.R} throughout this section.

\subsection{The proof of \Cref{t.decay} from \Cref{p.decay}}

We establish the bounds on the moments via arguments very similar to \cite[Theorem~1.3]{GuHenderson}; however, the slight alterations in the method here allows us to reduce the dependence of the estimates on $g_0$ to only  on $\supp(g_0)$.  When possible, we defer to the arguments in \cite[Theorem~1.3]{GuHenderson} and omit them here.

\begin{proof}[Proof of \Cref{t.decay}]
First, we obtain a pointwise upper bound on $g$.  We have, from \Cref{p.decay}, that
\[
	g(t,\cdot) \lesssim t^{-2/3};
\]
however, we require estimates on $g$ when $|x| \gtrsim t^{2/3}$.  To this end, let $L>0$ be such that $\supp(g_0) \subset [-L,L]$, and define
\[
	\overline g(t,x) = e^{\int_0^t E_g(s)\, ds} h(t,x),
\]
where $h$ is the solution of
\[
	\begin{cases}
		h_t = \frac{1}{2} \Delta h
			\qquad &\text{ in } (0,\infty)\times \R,\\
		h = g_0
			\qquad &\text{ on } \{0\}\times \R.
	\end{cases}
\]
It is straightforward to check that
\be\label{e.c10121}
	\partial_t \overline g
		= \frac{1}{2} \Delta \overline g + E_g \overline g.
\ee
While~\eqref{e.main} does not enjoy the comparison principle,~\eqref{e.c10121} does.  In addition, the non-negativity of $g$ and~\eqref{e.main} ensure that $g$ is a subsolution of~\eqref{e.c10121}.  Thus, the comparison principle implies that $g \leq \overline g$.  We deduce that, for all $t>0$,
\be\label{e.c8101}
\begin{split}
	g(t,x)
		&\leq \overline g(t,x)
		= e^{\int_0^t E_g(s)\, ds} \int_{\supp(g_0)} \frac{1}{\sqrt{2\pi t}}e^{- \frac{(x-y)^2}{2t}} g_0(y) \dy\\
		&\leq e^{\int_0^t E_g(s)\, ds} \int_{\supp(g_0)} \frac{1}{\sqrt{2\pi t}}e^{- \frac{x^2}{4t} + \frac{y^2}{t}} g_0(y) \dy\\
		&\leq \frac{e^{\int_0^t E_g(s)\, ds - \frac{x^2}{4t} + \frac{L^2}{t}}}{\sqrt{2\pi t}} \int_{\supp(g_0)} g_0(y) \dy
		= \frac{e^{\int_0^t E_g(s)\, ds - \frac{x^2}{4t} + \frac{L^2}{t}}}{\sqrt{2\pi t}}.
\end{split}
\ee
In the first equality, we used the kernel representation of solutions to the heat equation, and in the second inequality, we used Young's inequality in the exponent.

Applying \Cref{p.decay} and the fact that $E_g \leq M_g$, we find $C>0$ such that, if $t\geq 1$, then $\int_0^t E_g ds \leq C t^{1/3}$.  Using this in~\eqref{e.c8101} yields, for any $|x| \geq \sqrt{8C}t^{2/3}$,
\[
	g(t,x)
		\lesssim e^{C t^{1/3} - \frac{x^2}{8t} - \frac{x^2}{8t} + \frac{L^2}{t}}
		\leq e^{C t^{1/3} - \frac{\left|\sqrt{8 C} t^{2/3}\right|^2}{8t} - \frac{x^2}{8t} + \frac{L^2}{t}}
		= e^{ - \frac{x^2}{8t} + \frac{L^2}{t}}.
\]
Using this and \Cref{p.decay}, we obtain the crucial estimate:
\[
	g(t,x)
		\lesssim
			\begin{cases}
				t^{-2/3} \qquad &\text{ if } |x| \leq \sqrt{8C} t^{2/3},\\
				e^{\frac{L^2}{t} - \frac{|x|^2}{8t}}
					\qquad & \text{ otherwise.}
			\end{cases}
\]
A direct computation using this upper bound yields the upper bounds  on $\int |x|^p g(t,x) \dx$ for any $p>0$ and $t\geq 1$.  The proof of the lower bound is exactly as in \cite[Theorem~1.3]{GuHenderson} (that is, it uses the upper bound and a variational argument, see \cite[Lemma~4.2]{GuHenderson}).  As such, we omit the details.

The last step is, thus, to obtain lower bounds on the $L^2$- and $L^\infty$-norms of $g(t,\cdot)$.  By H\"older's inequality, $\|g(t)\|_2^2 \leq \|g(t)\|_\infty$ (recall that $\|g(t)\|_1 =1$). Hence, it is sufficient to find a lower bound on $\|g(t)\|_2$ in order to finish the claim.

To this end, we utilize the previously-established moment bounds.  Fix $L>0$ to be determined.  Then
\[\begin{split}
	1
		&= \int g(t,x) \dx
		\leq \int_{-Lt^{2/3}}^{Lt^{2/3}} g(t,x) \dx
			+ \int_{[-Lt^{2/3},Lt^{2/3}]^c} \frac{|x|^2}{L^2 t^{4/3}} g(t,x) \dx\\
		&\leq \sqrt{2L t^{2/3}} \|g(t)\|_2 + \frac{1}{L^2 t^{4/3}} \int |x|^2 g(t,x) \dx.
\end{split}\]
where, in the second inequality, we used H\"older's inequality.  
Choosing $L$ sufficiently large, the last term is smaller than $1/2$ by the moment bounds above.  Thus, $t^{1/3} \|g(t)\|_2$ is bounded below, as desired. This concludes the proof.
%
\end{proof}

\subsection{The decay of $M_g$: proof of \Cref{p.decay}}\label{s.E_g_M_g_decay}

Although \Cref{p.decay} and~\eqref{e.c2274} yields that $E_g(t) \lesssim t^{-2/3}$, we, in fact, must establish the decay of $E_g$ before proving \Cref{p.decay} as the decay of $E_g$ is a crucial element in its proof.  This is in contrast to the proof in~\cite{GuHenderson} for the case $R = \delta$, where we deduced the decay of $M_g$ directly and then used this to establish the decay of $E_g$.  We state this main ingredient in the following proposition.

\begin{proposition}\label{p.E_g_decay}
	When $d=1$ and $R$ is continuous and satisfies~\eqref{e.R}, we have, for all $t$,
	\[
		E_g(t)
			\lesssim \min\left\{\frac{1}{\sqrt t},\frac{1}{t^{2/3}}\right\}.
	\]
\end{proposition}

In addition, we require the following lemma, allowing us to show that, in some sense, $g$, $u$, and $w$ cannot be ``too different.''
\begin{lemma}\label{l.maxima_guw}
	When $d=1$ and $R$ is continuous and satisfies~\eqref{e.R}, we have, for all $t\geq 1$,
	\be\label{e.c351}
		M_w(t)
			\leq M_u(t)
			\leq M_g(t)
			\lesssim M_w(t).
	\ee
	Furthermore, for any $(t,x) \in [1,\infty)\times \R$,
	\be\label{e.c352}
		\frac{g(t,x)^2}{M_g(t)} \lesssim w(t,x), u(t,x).
	\ee
\end{lemma}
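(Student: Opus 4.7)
The chain $M_w \leq M_u \leq M_g$ in~\eqref{e.c351} is immediate from Young's convolution inequality: since $\phi \geq 0$ with $\|\phi\|_1 = 1$, convolution with $\phi$ contracts the $L^\infty$ norm; applying this successively to $g$ (yielding $u = \phi * g$) and then to $u$ (yielding $w = \phi * u$) gives both inequalities.

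To establish the reverse bound $M_g \lesssim M_w$ in~\eqref{e.c351} and the pointwise estimate~\eqref{e.c352}, I would invoke the local-in-time Harnack inequality \Cref{p.harnack}. It applies with uniform constants for $t \geq 1$ because the coefficient $|E_g - w| \leq 2 M_g$ in the equation for $g$ is bounded: standard heat-kernel comparison with the linear equation~\eqref{e.c10121} yields $M_g(t) \lesssim t^{-1/2} \lesssim 1$. For $M_g \lesssim M_w$, let $x_0$ satisfy $g(t, x_0) \geq M_g(t)/2$; Harnack provides universal $r_0, c > 0$ with $g(t, y) \geq c M_g(t)$ on $B_{r_0}(x_0)$, whence
\[
    M_w(t) \geq w(t, x_0) \geq c\, M_g(t) \int_{B_{r_0}(0)} R(z)\,dz \gtrsim M_g(t),
\]
the last inequality using $R(0) = \|\phi\|_2^2 > 0$ and continuity of $R$.

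For~\eqref{e.c352}, I would fix $(t, x)$ and write $\lambda = g(t, x)$ and $M = M_g(t)$. The rescaled function $v = g/M$ satisfies $v \leq 1$ and a heat equation with uniformly bounded source (since $|E_g - w| \lesssim M \lesssim 1$), so interior parabolic $C^1$ estimates deliver the Lipschitz bound $\|\nabla g(t, \cdot)\|_\infty \lesssim M$. Hence $g(t, y) \geq \lambda/2$ on $B_r(x)$ with $r := \lambda/(2 C M) \leq 1/(2C)$. Since $R$ is continuous and $R(0) > 0$, $\int_{B_r(0)} R \gtrsim r \sim \lambda/M$, so
\[
    w(t, x) \geq \int_{B_r(x)} R(x-y) g(t, y)\,dy \geq \tfrac{\lambda}{2} \int_{B_r(0)} R \gtrsim \frac{\lambda^2}{M}.
\]
The bound $u(t, x) \gtrsim \lambda^2/M$ would be obtained analogously; one locates (by continuity of $\phi$ and $\int \phi = 1$) a point $z_0$ with $\phi \geq c_0 > 0$ on $B_{r_1}(z_0)$, and applies a Harnack-propagation/Lipschitz argument on the translated ball $B_r(x - z_0)$ to reach the support region of $\phi$.

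The main obstacle is the Lipschitz bound $|\nabla g| \lesssim M_g$: mere H\"older regularity with exponent $\alpha < 1$ would only produce a Harnack ball of radius $(\lambda/M)^{1/\alpha}$, which is too small since $\lambda/M \leq 1$. Full parabolic $C^1$ regularity for the rescaled equation is required, and is the point at which the uniform bound $M_g \lesssim 1$ for $t \geq 1$ plays its essential role. A secondary subtlety is the bound on $u$, where one must handle the fact that $\phi(0)$ need not be positive by translating to a ball on which $\phi$ is bounded below and then propagating the Harnack lower bound across the finite distance $|z_0|$.
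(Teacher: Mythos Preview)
Your plan is essentially correct, but it takes a different route from the paper for~\eqref{e.c352}, and there is one omission you should be aware of.

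\textbf{Comparison with the paper.} For the bound $w(t,x)\gtrsim g(t,x)^2/M_g(t)$ you use parabolic $C^1$ regularity to find a ball of \emph{variable} radius $r\sim g(t,x)/M_g(t)$ on which $g\geq g(t,x)/2$, and then exploit $R(0)=\|\phi\|_2^2>0$ to get $\int_{B_r}R\gtrsim r$.  The paper instead applies the local-in-time Harnack inequality (\Cref{p.harnack}) with $\eps=1/2$ directly: for $y$ in a \emph{fixed} ball $B_{r_0}$ (chosen so that $\int_{B_{r_0}}\phi=1/2$) one obtains
\[
g(t,x-y)\ \gtrsim\ \frac{g(t,x)^2}{M_g(t)},
\]
and then simply integrates against $\phi$.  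The advantage of the paper's route is that it treats $u$ and $w$ in one stroke and never needs $\phi(0)>0$; your Lipschitz argument is clean for $w$ but, as you note, for $u$ you must translate to a region where $\phi$ is bounded below and propagate the lower bound across that fixed distance --- and the only way to do that without losing the square is precisely the same Harnack inequality the paper uses.  So for $u$ your proposal and the paper's proof effectively coincide.

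\textbf{A missing ingredient.} Your Lipschitz bound $\|\nabla g(t,\cdot)\|_\infty\lesssim M_g(t)$ does not follow from $M_g\lesssim 1$ alone.  Interior parabolic $C^1$ estimates control $\nabla g(t,\cdot)$ by $\|g\|_{L^\infty([t-1/2,t]\times\R)}$, and to replace this by $M_g(t)$ you need the comparability $\sup_{s\in[t-1/2,t]}M_g(s)\lesssim M_g(t)$.  The paper isolates this as a separate lemma (\Cref{l.maxima_change}), proved by sandwiching $g$ between $e^{\pm A(t-t_0)}h$ for a heat solution $h$ and then invoking the standard parabolic Harnack inequality for $h$.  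The same issue arises in your Harnack-at-the-maximum argument for $M_g\lesssim M_w$: the estimate in \Cref{p.harnack} carries the factor $\|g\|_{L^\infty([t-t_0,t]\times\R)}^{\eps}$, which again must be traded for $M_g(t)$ via \Cref{l.maxima_change}.  You should invoke this explicitly.
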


Finally, we also have a small time bound on $M_g$.
\begin{lemma}\label{l.small_time}
	When $d=1$ and $R$ is continuous and satisfies~\eqref{e.R}, we have, for all $t\in [0,1]$, 
	\[
		E_g(t)
			\leq M_g(t)
			\lesssim \frac{1}{\sqrt t}.
	\] 
	The implied constant in the above inequality does not depend on $\supp(g_0)$.
\end{lemma}

We establish \Cref{p.E_g_decay} in \Cref{s.E_g_decay} up to a technical lemma.  The technical lemma is proved in \Cref{s.integral_term}, and relies as well on \Cref{l.maxima_guw}.  The latter is proved in \Cref{s.maxima}.  Finally, \Cref{l.small_time} has an elementary proof, that relies on the identity \eqref{e.differential_inequality} established at the beginning of \Cref{s.E_g_decay}, which describes the evolution of  $E_g$. 

To reiterate the dependencies outlined above, we use all three results (\Cref{p.E_g_decay}, \Cref{l.maxima_guw}, and \Cref{l.small_time}) to prove \Cref{p.decay}.  The proofs of \Cref{l.maxima_guw} and \Cref{l.small_time} do {\em not} depend on \Cref{p.E_g_decay}.  The proof of \Cref{p.E_g_decay}, however, {\em does} depend on \Cref{l.small_time} .\

We now prove \Cref{p.decay} assuming \Cref{p.E_g_decay} and \Cref{l.maxima_guw,l.small_time}.

\begin{proof}[Proof of \Cref{p.decay}]
The bound for $t\leq 1$ follows directly from \Cref{l.small_time}.  
Hence, we need only address the case $t\geq 1$.  By evaluating~\eqref{e.main} at the location of a spatial maximum $(t,x_t)$, we find
	\be\label{e.c6301}
		\dot M_g \leq M_g (E_g - w(t,x_t)).
	\ee
	Using \Cref{p.E_g_decay} and \Cref{l.maxima_guw} in~\eqref{e.c6301} yields, for some $C>0$,
	\[
		\dot M_g \leq M_g \left( \frac{C}{t^{2/3}} - \frac{M_g}{C}\right).
	\]
	Let $\overline M(t) = A t^{-2/3}$ for $A$ to be determined.  By \Cref{l.small_time}, if $A$ is sufficiently large, $\overline M(1) \geq M_g(1)$.  In addition, we have
	\[
		\dot{\overline M}
			- \overline M \left( \frac{C}{t^{2/3}} - \frac{\overline M}{C}\right)
			= - \frac{2 A}{3 t^{5/3}}
				+ \frac{A^2}{t^{4/3}} \left(\frac{1}{C} - \frac{C}{A}\right)
			>0,
	\]
	where the last inequality holds as long as $A$ is sufficiently large.  The comparison principle implies that, for all $t\geq 1$,
	\[
		M_g(t) \leq \overline M(t).
	\]
	This concludes the proof.
\end{proof}

\subsection{A differential equation for $E_g$}\label{s.E_g_decay}

Before we embark on the proof, we note a useful identity for convolutions that is applied often in the sequel.  By the symmetry of $R$ (recall~\eqref{e.R}),
\be\label{e.convolution_identity}
	\int (R* f_1) f_2\dx = \int f_1 (R*f_2) \dx
		\qquad\text{ for any $f_1, f_2$.}
\ee

We begin our proof by deriving a differential equation for $E_g$.  Convolving~\eqref{e.main} with $R$, we find
\be
	\partial_t w
		- \frac{1}{2} \Delta w
		= w E_g - R*(gw).
\ee
Multiplying by $g$ and integrating yields
\[
	\int g \partial_t w \dx
		+ \frac{1}{2} D_g
		= E_g^2 - \int g R*(gw) \dx.
\]

Notice that
\[
	\dot E_g
		= \int (\partial_t g w + g \partial_t w) \dx
\]
and
\[
	\int g \partial_t w dx
		= \int g R* (\partial_t g) \dx
		= \int (R*g) \partial_t g \dx
		=\int w \partial_t g dx.
\]
Above we used~\eqref{e.convolution_identity}. Hence
\[
	\int g \partial_t w \dx
		= \frac{1}{2} \dot E_g.
\]
Thus, the above becomes
\be\label{e.c6302}
	\frac{1}{2} \dot E_g
		+ \frac{1}{2} D_g
		= E_g^2 - \int g R*(gw) \dx.
\ee

We now derive a simplified form for the right hand side of~\eqref{e.c6302}.  First, using~\eqref{e.convolution_identity} a second time, we find
\[
	\int g R*(gw) \dx
		= \int (R*g) g w \dx
		= \int g w^2 \dx.
\]
Recalling that $g$ is a probability measure and $E_g = \langle g, w\rangle$, we obtain
\[\begin{split}
	E_g^2 - \int g w^2 \dx
		&= - \int g(w^2 - E_g^2) \dx
		= - \int g( w^2 - 2 w E_g + E_g^2) \dx \\
		&= - \int g(w - E_g)^2 \dx.
\end{split}\]

Putting this together with~\eqref{e.c6302}, we find
\be\label{e.differential_inequality}
	\dot E_g + D_g
		= - 2 \int g(w-E_g)^2 \dx.
\ee
We note that~\eqref{e.differential_inequality} implies that $E_g$ is decreasing.  However, this is not sufficient for \Cref{p.E_g_decay}, and, instead, it is required to bound the integral term on the right hand side away from zero.  Before proceeding with this proof, we show how to conclude \Cref{l.small_time}.

We now establish the (much simpler) bound on $E_g$ and $M_g$ for small times.

\begin{proof}[Proof of \Cref{l.small_time}]
The bound on $E_g$ follows from a general argument that works in all dimensions that is based on a generalization of the Nash inequality.  As this decay is the focus of \Cref{t.hd_moments} and the one dimensional result is simply a side-effect of the analysis, we postpone it until \Cref{s.higher_dimensions}.  The bound is given in \Cref{p.hd_decay}.  We note that the bound in \Cref{p.hd_decay} is weaker than that of \Cref{p.E_g_decay} and its proof is independent of \Cref{p.E_g_decay}.  Hence, as we noted above, the proof of \Cref{l.small_time} is independent of \Cref{p.E_g_decay}, which is important because we use \Cref{l.small_time} to prove \Cref{p.E_g_decay}.  Hence, to finish the proof we need only derive the bound for $M_g$ from the bound for $E_g$.

Let $C$ be such that $E_g(t) \leq C t^{-1/2}$.  Define $h$ to be the solution of $\partial_t h = (1/2) \Delta h$ with $h(0,\cdot) = g_0$.  Let $G$ be the heat kernel; that is,
\[
	G(t,x)
		= \frac{1}{\sqrt{2\pi t}} e^{-\frac{x^2}{2t}}.
\]
Then $h = G(t) * g_0$.

We now define our barrier function
\[
	\overline g(t,x) = e^{\int_0^\tau E_g(s)\, ds} h(t,x).
\]
Notice that $\overline g$ solves
\be\label{e.c10141}
	\partial_\tau \overline g
		= \frac{1}{2} \overline g
			+ E_g \overline g.
\ee
By~\eqref{e.main} and the non-negativity of $g$, we see that $g$ is a subsolution of~\eqref{e.c10141}.  Hence, the comparison principle implies that $g\leq \overline g$.  Thus, for all $t\in [0,1]$,
\[\begin{split}
	M_g(t)
		&\leq M_{\overline g}(t)
		= e^{\int_0^t E_g(s)\, ds} M_h(t)
		\leq e^{2C t} M_h(t)\\
		&\lesssim \|G(t) * g_0\|_\infty
		\leq \|G(t)\|_\infty \|g_0\|_1
		\lesssim \frac{1}{\sqrt t},
\end{split}\]
which concludes the proof.
\end{proof}

Having finished the proof of the bounds for small time, we focus on the case $t\geq 1$ for the remainder of the section.  To that end, we seek to bound the integral term on the right hand side of~\eqref{e.differential_inequality} away from zero.  We establish this in the following lemma.
\begin{lemma}\label{l.integral_bound}
	When $d=1$ and $R$ is continuous and satisfies~\eqref{e.R}, we have, for every $t\geq 1$,
	\[
		\frac{E_g^5}{D_g}
			\lesssim \int g(w - E_g)^2 \dx.
	\]
\end{lemma}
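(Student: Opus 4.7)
My goal is to prove the reverse-Poincar\'e estimate $E_g^5 \lesssim D_g \int g(w-E_g)^2\,dx$. Since $\int g=1$ and $\int gw=E_g$, the right-hand side is the variance of $w$ under the probability measure $g(x)\,dx$, and it vanishes only when $w$ is constant on $\mathrm{supp}(g)$; the inequality therefore quantifies how constant $w$ can be on the mass of $g$. The key analytic input is the gradient estimate
\[
\|\nabla w\|_\infty \leq \|\phi\|_2\|\nabla u\|_2 = \|\phi\|_2\,D_g^{1/2},
\]
obtained by writing $\nabla w = \phi*\nabla u$ and applying Young's convolution inequality.  This means any pointwise gap $|w(x_1)-w(x_2)|=\eta$ forces $|x_1-x_2| \gtrsim \eta\,D_g^{-1/2}$, converting values of $w$ into quantitative spatial scales.

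I would then split based on the size of $M_w$ relative to $E_g$.  In the \emph{peaked case} $M_w \geq 2E_g$, fix a maximum $x_0$ of $w$; the gradient bound yields $w-E_g \geq E_g/2$ on a ball $B_r(x_0)$ with $r \asymp E_g/D_g^{1/2}$, so that
\[
\int g(w-E_g)^2\,dx \geq \tfrac14\,E_g^2 \int_{B_r(x_0)} g\,dx,
\]
and it suffices to prove $\int_{B_r(x_0)} g\,dx \gtrsim E_g^3/D_g$.  Lemma~\ref{l.maxima_guw} gives $M_g \approx M_w$ and $g^2\lesssim M_g w$, which forces a maximum $x_*$ of $g$ to sit near $x_0$; applying the local-in-time Harnack inequality (Proposition~\ref{p.harnack}) at $x_*$ then propagates $g \gtrsim M_g$ into a neighborhood overlapping $B_r(x_0)$, which should furnish the required $g$-mass. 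In the \emph{flat case} $M_w < 2E_g$, Lemma~\ref{l.maxima_guw} forces $M_g \lesssim E_g$, so the effective support of $g$ has diameter $\gtrsim 1/E_g$.  Here I would use Harnack again to identify a ``plateau'' of size $\sim 1/E_g$ on which $g\gtrsim E_g$, and then observe that near the boundary of this plateau $w$ must transition from $\approx E_g$ toward $0$; the gradient bound forces the transition to occupy a layer of width $\gtrsim E_g/D_g^{1/2}$ on which $|w-E_g|\gtrsim E_g$ and $g\gtrsim E_g$, whose contribution to $\int g(w-E_g)^2\,dx$ is of order $E_g^5/D_g$.

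The main obstacle is the flat case, where three length scales interact nontrivially: the effective support of $g$ ($\sim 1/E_g$), the regularization scale fixed by $\phi$, and the $w$-transition scale $E_g/D_g^{1/2}$.  Producing a Harnack ball large enough to exhibit the plateau while keeping the Harnack constant under control---its size depends on the drift bound $\|E_g-w\|_\infty \lesssim M_g$ and on the chosen parabolic cylinder dimensions---and ensuring throughout that all constants depend only on $\mathrm{supp}(g_0)$, is the delicate technical point.  In spirit, the argument realizes the authors' slogan that the $R=\delta$ functional inequality from \cite{GuHenderson,Constantin_2000} is stable under convolutions \emph{provided} enough regularity, and Harnack is the tool that supplies that regularity.
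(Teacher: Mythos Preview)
Your two-case structure and the idea of combining Harnack with gradient information are exactly right, but the specific gradient tool you pick leads to the wrong exponent. Your control $\|\nabla w\|_\infty \lesssim \|\phi\|_2\,D_g^{1/2}$ (which also needs $\phi\in L^2$, not assumed) produces a transition layer of width $\sim E_g/D_g^{1/2}$. Multiplying out your flat-case contribution gives
\[
(\text{width})\cdot g\cdot (w-E_g)^2 \;\sim\; \frac{E_g}{D_g^{1/2}}\cdot E_g\cdot E_g^2 \;=\; \frac{E_g^4}{D_g^{1/2}},
\]
not $E_g^5/D_g$; these agree only when $D_g\sim E_g^2$, and Nash in $d=1$ only gives $D_g\gtrsim E_g^3$. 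The same gap appears in your peaked case: with $r\sim E_g/D_g^{1/2}$ and $g\sim M_g\sim E_g$ on $B_r$ (via Harnack), you get $\int_{B_r}g\sim E_g^2/D_g^{1/2}$, and the target $E_g^3/D_g$ would again require $D_g\gtrsim E_g^2$. If you feed $E_g^4/D_g^{1/2}$ into $\dot E_g+D_g=-2\int g(w-E_g)^2$ and optimize over $D_g$, you obtain only $\dot E_g\lesssim -E_g^{8/3}$, hence $E_g\lesssim t^{-3/5}$ rather than $t^{-2/3}$. So this is not a cosmetic slip: the $L^\infty$ bound is too weak to reach the lemma as stated.

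The paper replaces the pointwise gradient control by the $L^2$ estimate
\[
|u(a)-u(b)|^2 \le |a-b|\,D_g
\]
(fundamental theorem of calculus plus Cauchy--Schwarz), which for a gap $\eta\sim E_g$ in $u$ forces width $\gtrsim E_g^2/D_g$ and hence the correct product $E_g^3\cdot E_g^2/D_g=E_g^5/D_g$. To run this, the paper does \emph{not} try to locate a $w$-transition layer or a $g$-plateau of size $1/E_g$. Instead it fixes nested level sets of $g$: points $x_1<x_2<x_3<x_4$ with $g(x_i)=E_g/B_i$ (in the case $M_g\lesssim E_g$; when $M_g\gg E_g$ one uses heights $M_g/B_i$). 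The bridge between $g$ and $w,u$ is a separate lemma (\Cref{l.far_level_sets}), proved via the local-in-time Harnack inequality on $O(1)$ scales only: if $g(x_0)^2/(g(x_1)M_g)$ is large enough, then $w(x_1),u(x_1)\le \tfrac12 g(x_0)$. This simultaneously guarantees $g\gtrsim E_g$ on $[x_2,x_4]$ (by the choice of level sets), $|w-E_g|\gtrsim E_g$ there, and $|u(x_2)-u(x_4)|\gtrsim E_g$, without invoking Harnack on the large scale $1/E_g$. Your ``delicate technical point'' of producing a Harnack ball of size $1/E_g$ is therefore sidestepped entirely; that obstacle is real, and the level-set/\Cref{l.far_level_sets} mechanism is the missing idea.
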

Ideally, to prove \Cref{l.integral_bound}, we would apply similar methods as were used to establish the lower bound on $\int g(M-g)\dx$ in~\cite[Lemma 4.3]{GuHenderson} for the case $R=\delta$.  This is, in spirit, possible; however, it is complicated by the fact that the right hand side of~\eqref{e.differential_inequality} involves both $g$ and $w=R*g$, which take different values when $R\neq\delta$.  As a result, the proof of \Cref{l.integral_bound} is significantly more technical than the proof of its counterpart in \cite{GuHenderson}.  In fact, it is in the proof of this inequality that almost all of the difficulty lies.  We delay its proof until \Cref{s.integral_term}.  We now show how to conclude the upper bound on $E_g$ assuming \Cref{l.integral_bound}.

\begin{proof}[Proof of \Cref{p.E_g_decay}]
We first note that \Cref{l.small_time} yields a bound on $E_g(1)$.  This can be extended to a bound on $[1,2]$ since $E_g$ is decreasing (recall~\eqref{e.differential_inequality}).  Hence, we need only establish an upper bound on $[2,\infty)$.

We begin by applying~\eqref{e.differential_inequality} and \Cref{l.integral_bound} to find, for $t\geq 1$ and some $C>0$,
\[
	\dot E_g + D_g
		+ \frac{1}{C} \frac{E_g^5}{D_g}
			\leq 0.
\]
Applying Young's inequality, we have
\[
	\frac{2}{\sqrt{C}} E_g^{5/2}
		= 2\sqrt{D_g} \sqrt{\frac{E_g^5}{C D_g}}
		\leq D_g + \frac{1}{C} \frac{E_g^5}{D_g}.
\]
Combining the two above inequalities, we find
\[
	\dot E_g + \frac{2}{\sqrt{C}} E_g^{5/2}
		\leq 0.
\]
Solving this differential inequality yields, for all $t \geq 1$,
\[
	E_g(t)
		\lesssim \frac{1}{(t-1)^{2/3}}.
\]
This concludes the proof of the upper bound for all $t\geq 2$, which finishes the proof.
\end{proof}

\subsection{A lower bound on the integral term}\label{s.integral_term}

We begin by stating a key lemma related to the spatial regularity of $g$.  This is the ``local-in-time Harnack inequality.''  While this was introduced in~\cite{BHR_toads_delay}, we use the more precise statement of~\cite{BHR_nonlocal} as we require the flexibility in the parameter $t_0$ in the sequel.
\begin{proposition}\cite[Proposition 1.2]{BHR_nonlocal} \label{p.harnack}
	Fix any $t > t_0 > 0$.  Let $\eps \in (0,1)$.  There exists $C_\eps$, depending only on $\eps$, such that, for any $x,y\in\R$,
	\[
		g(t,x)
			\leq C_\eps \exp\Bigg\{C_\eps \|g\|_{L^\infty([t-t_0,t]\times \R)} t_0 + \frac{C_\eps |x-y|^2}{t_0}\Bigg\} g(t,y)^{1 - \eps} \|g\|_{L^\infty([t-t_0,t]\times \R)}^{\eps}.
	\]
\end{proposition}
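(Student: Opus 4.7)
I would view $g$ as a nonnegative solution of the linear parabolic equation
$$\partial_t g = \tfrac{1}{2}\Delta g + c(t,x)\, g, \qquad c(t,x) := \langle R*g(t), g(t)\rangle - (R*g)(t,x),$$
and exploit that $|c|$ is bounded on $[t-t_0,t]\times\R$ by $2M$, where $M := \|g\|_{L^\infty([t-t_0,t]\times\R)}$. Indeed, $\int R\,dx = \int g\,dx = 1$ and $R\geq 0$, together with Young's inequality, give $\|R*g\|_{L^\infty}\leq M$ and $|\langle R*g,g\rangle|\leq M$. The proposition is therefore a local-in-time Harnack inequality for the heat equation perturbed by a bounded potential.

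The plan has three steps. First, the sub-/super-solution structure $s\mapsto e^{\mp 2Ms}g(s,\cdot)$ with respect to the pure heat equation, combined with the comparison principle, yields the two-sided pointwise sandwich
$$e^{-2Mt_0}\,(G_{t_0}*g(t-t_0))(z)\ \leq\ g(t,z)\ \leq\ e^{2Mt_0}\,(G_{t_0}*g(t-t_0))(z)$$
for every $z\in\R$, where $G_\tau$ denotes the Gaussian heat kernel; this is the source of the $e^{CMt_0}$ factor. Second, I would transfer the spatial base point from $x$ to $y$ via the elementary Gaussian inequality $G_{t_0}(x-z) \les e^{(x-y)^2/(2t_0)} G_{2t_0}(y-z)$, which follows from $(x-z)^2 \geq \tfrac{1}{2}(y-z)^2 - (x-y)^2$. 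Combining the two steps gives
$$g(t,x)\ \les\ e^{CMt_0 + C|x-y|^2/t_0}\,(G_{2t_0}*g(t-t_0))(y).$$

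The delicate step is converting $(G_{2t_0}*g(t-t_0))(y)$ into an expression of the form $g(t,y)^{1-\eps}M^\eps$. Writing $G_{2t_0} = G_{t_0}*G_{t_0}$ and applying the upper bound of Step~1 reduces the task to bounding the local average $(G_{t_0}*g(t))(y)$ pointwise by $g(t,y)^{1-\eps}M^\eps$. For this I would split $g(t,z) \leq g(t,z)^{1-\eps} M^\eps$ and then use a Bernstein- or Li--Yau-type gradient estimate on $v := \log g$, which satisfies $\partial_t v = \tfrac{1}{2}\Delta v + \tfrac{1}{2}|\nabla v|^2 + c$, to control $g(t,z)^{1-\eps}$ in terms of $g(t,y)^{1-\eps}$ on a $\sqrt{t_0}$-neighborhood of $y$. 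The parameter $\eps$ provides the slack needed in this interpolation: the trivial pointwise bound $g\leq M$ is absorbed into the small factor $M^\eps$, while the pointwise value at $y$ is preserved as $g(t,y)^{1-\eps}$.

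The main obstacle I expect is this last interpolation step. It is analogous to the ``$\eps$-Harnack'' inequalities appearing in the Krylov--Safonov theory and requires genuine parabolic regularity rather than merely the comparison principle; without the $\eps$ exponent the inequality would fail in general, since $g(t,y)$ could vanish without $g(t,x)$ vanishing, and the loss of a small power of $M$ is exactly what is needed to accommodate this possibility.
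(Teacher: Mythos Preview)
The paper does not supply its own proof of this proposition: it is quoted verbatim from \cite[Proposition~1.2]{BHR_nonlocal} (with the idea originating in \cite{BHR_toads_delay}) and used as a black box, so there is no in-paper argument to compare against.

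That said, your outline is headed in the right direction and is close in spirit to the cited proof. Steps~1--3 are correct and already capture the $e^{CMt_0}$ and $e^{C|x-y|^2/t_0}$ factors in the clean way you describe. The genuine content is, as you flag, Step~4: turning the averaged quantity $(G_{\tau}*g(t-\tau))(y)$ into $g(t,y)^{1-\eps}M^\eps$. Your proposal to do this via a Li--Yau or Bernstein estimate on $\log g$ is plausible but would require substantial work to make precise, and it is not how the cited references proceed. The cleaner device is to observe that $w:=g^{1-\eps}$ is itself a \emph{supersolution} of the linear problem $\partial_t w=\tfrac12\Delta w-2Mw$ (the nonlinearity $\tfrac{\eps(1-\eps)}{2}g^{-1-\eps}|\nabla g|^2$ produced by the chain rule has a favorable sign), which yields directly
\[
g(t,y)^{1-\eps}\ \geq\ e^{-2M\sigma}\bigl(G_{\sigma}*g(t-\sigma)^{1-\eps}\bigr)(y).
\]
Combining this lower bound with your upper bound $g(t,x)\leq e^{2M\tau}M^\eps\bigl(G_\tau*g(t-\tau)^{1-\eps}\bigr)(x)$ (obtained by writing $g=g^\eps g^{1-\eps}\leq M^\eps g^{1-\eps}$ before convolving) gives exactly the structure needed; one then only has to balance the two Gaussian variances after the spatial shift, which is a bookkeeping matter of choosing $\tau$ and $\sigma$ appropriately within $[t-t_0,t]$. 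This supersolution trick replaces the parabolic-regularity step you anticipate and explains transparently why the $\eps$-loss is both necessary and sufficient.
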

The main difference between \Cref{p.harnack} and the standard parabolic Harnack inequality is the fact that we do not require a ``shift'' in time; that is, the $g$ terms on the right and left are both evaluated at the same $t$ (up to the $\|g\|_\infty^\eps$ error term).  We use \Cref{p.harnack} often in the sequel.  

%
%

\subsubsection{How the maxima evolve}\label{s.maxima}

We now establish a preliminary upper bound on $g$ and use it, along with the local-in-time Harnack inequality (\Cref{p.harnack}) to establish the comparison between $g$, $u$, and $w$ (\Cref{l.maxima_guw}).

We first show that the maxima of $g$, $u$, and $w$ are bounded independent of $t\geq 1$.  This allows us to obtain regularity of $g$ that is uniform in $t$ as $t\to\infty$.
\begin{lemma}\label{l.maxima_bounded}
	Suppose that $d=1$ and $R$ is continuous and satisfies~\eqref{e.R}.  For all $t \geq 1/4$,
	\[
		M_g(t) \lesssim 1.
	\]
\end{lemma}
\begin{proof}
Fix any $t_0 \geq 1/4$ we establish a bound of $M_g(t_0)$.  Let $\tilde g(t,x) = g(t + (t_0-1/4),x)$.  Since~\eqref{e.main} is an autonomous equation, $\tilde g$ satisfies~\eqref{e.main} with initial data $\tilde g(0,\cdot) = g(t_0-1/4,\cdot)$ and $\int \tilde{g}(0,\cdot) =1$.  Applying \Cref{l.small_time}, we find
\[
	M_g(t_0)
		= M_{\tilde g}(1/8)
		\lesssim \frac{1}{\sqrt{1/8}}
		\lesssim 1,
\]
which concludes the proof.
\end{proof}

We next require that, over finite time intervals, the $M_g$ does not change too much.  This allows us to apply \Cref{p.harnack} in such a way that we can replace the $\|g\|_{L^\infty([t-t_0,t]\times\R)}$ term by $M_g(t)$.
\begin{lemma}\label{l.maxima_change}
	For all $t_1 \geq 1$ and $t_0 \in [0,1/2]$, we have
	\[
			M_g(t_1)
				\lesssim M_g(t_1-t_0)
				\lesssim M_g(t_1).
	\]
\end{lemma}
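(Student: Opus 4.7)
The proof splits along the two inequalities, with only the backward one being nontrivial.

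The forward inequality $M_g(t_1) \lesssim M_g(t_1-t_0)$ will come from a Gronwall estimate on the envelope $M_g$. At a spatial maximizer $x_t$ of $g(t,\cdot)$, non-positivity of $\Delta g(t,x_t)$ together with $E_g \le M_g \le K$ — where $K$ is the uniform $L^\infty$ bound on $[1/4,\infty)$ supplied by \Cref{l.maxima_bounded} — gives $\partial_t g(t,x_t) \le K M_g(t)$, hence $\dot M_g \le K M_g$ in the envelope/Danskin sense. Integrating over $[t_1-t_0,t_1]$ yields $M_g(t_1) \le e^{K t_0} M_g(t_1-t_0) \le e^{K/2} M_g(t_1-t_0)$.

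For the backward inequality $M_g(t_1-t_0) \lesssim M_g(t_1)$, the plan is to invoke Moser's parabolic Harnack inequality. The function $g$ is a nonnegative classical solution of the linear parabolic equation $\partial_s g = \tfrac{1}{2}\Delta g + V g$ on $[1/4,\infty)\times \R$, with potential $V(s,y) := E_g(s) - w(s,y)$ satisfying $|V| \le 2K$ by \Cref{l.maxima_bounded}. Under the parabolic rescaling $(\sigma,z) = ((s-s_0)/T, (x-x_*)/\sqrt T)$ the rescaled potential becomes $TV$, which remains bounded by $2K$ whenever $T \le 1$, so Moser's inequality on the unit cylinder transports to a scale-invariant statement: there exists $C = C(K)$ such that for every $x_* \in \R$, $s_0 \ge 1/4$, and $T \in (0,1]$,
\[
    \sup_{[s_0 + T/4,\, s_0 + T/2]\, \times\, B_{\sqrt T}(x_*)} g
        \;\le\; C \inf_{[s_0 + 3T/4,\, s_0 + T]\, \times\, B_{\sqrt T}(x_*)} g.
\]
Choosing $T = 2 t_0$, $s_0 = t_1 - 2 t_0$, and $x_*$ a maximizer of $g(t_1-t_0,\cdot)$ places $(t_1-t_0, x_*)$ in the past sub-cylinder and $(t_1, x_*)$ in the future one, so the Harnack inequality immediately gives $M_g(t_1-t_0) = g(t_1-t_0,x_*) \le C g(t_1, x_*) \le C M_g(t_1)$.

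The main obstacle will be the edge case $s_0 = t_1 - 2 t_0 < 1/4$, arising for $t_1 \in [1, 5/4)$ and $t_0$ near $1/2$, where Moser's hypothesis on the cylinder fails because the cylinder reaches into the regime where \Cref{l.maxima_bounded} does not yet apply. On this compact parameter set I will finish by a soft compactness argument: $M_g$ is continuous and strictly positive on the compact interval $[1/2, 5/4]$, so it is bounded between two positive constants depending only on $\supp g_0$, and both inequalities of the lemma hold trivially with a (possibly worse) constant. A secondary technical point is that the Moser constant $C$ must be uniform in $t_0 \in (0,1/2]$; this is precisely what the parabolic rescaling ensures, since the rescaled coefficient bound $\|T V\|_\infty \le 2K$ is independent of $T$.
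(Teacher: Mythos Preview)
Your main argument is correct and more direct than the paper's. The paper introduces an auxiliary heat solution $h$ launched at time $t_1-3/4$, sandwiches $g$ between $e^{\pm A(t-(t_1-3/4))}h$ via the comparison principle, and then applies the parabolic Harnack inequality to the pure heat equation (so the Harnack constant is the universal one for the free heat operator). You instead apply Moser's Harnack inequality directly to $g$, treating it as a nonnegative solution of $\partial_t g = \tfrac12\Delta g + Vg$ with $\|V\|_\infty$ bounded via \Cref{l.maxima_bounded}; the parabolic rescaling correctly makes the Harnack constant uniform in $t_0\in(0,1/2]$. Your Gronwall bound $\dot M_g\le K M_g$ for the forward inequality is likewise shorter than the paper's route through $\overline g$. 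What the paper's indirection buys is that, by anchoring everything at $t_1-3/4\ge 1/4$, no edge case ever arises.

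Your edge-case patch, however, has a gap: continuity and strict positivity of $M_g$ on the compact interval $[1/2,5/4]$ only yield positive bounds depending on the particular initial datum $g_0$, whereas the $\lesssim$ convention in this paper requires constants depending only on $\supp g_0$. The upper bound is fine (it comes from \Cref{l.maxima_bounded}), but a lower bound on $M_g$ uniform over all probability densities supported in $[-L,L]$ needs a separate argument---for instance, combine the Gaussian barrier~\eqref{e.c8101} with $\int g\,dx=1$ to show that most of the mass of $g(t,\cdot)$ sits in an interval of length depending only on $L$, which forces $M_g(t)$ to be bounded below on $[1/2,5/4]$ by a constant depending only on $L$. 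Alternatively, you could sidestep the edge case entirely by always anchoring your Harnack cylinder no earlier than $1/4$, which is what the paper's auxiliary-$h$ construction effectively accomplishes.
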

\begin{proof}
	Fix $t_1\geq 1$ and $t_0 \in [0,1/2]$.  Let $h$ be the solution to
	\[
		\begin{cases}
			h_t = \frac{1}{2} \Delta h \qquad &\text{ in } (t_1-3/4,\infty)\times \R,\\
			h = g(t_1-3/4,\cdot) \qquad &\text{ on } \{t_1-3/4\} \times \R.
		\end{cases}
	\]
	Recall that $M_g(t)$ is bounded above by \Cref{l.maxima_bounded} for $t\geq 1/4$.  Let
	\be\label{e.c10142}
		A = \sup_{t\geq 1/4} M_g(t) < \infty
	\ee
	and define $\overline g(t,x) = e^{A(t-(t_1-3/4))} h(t,x)$ and $\underline g(t,x) = e^{-A(t-(t_1-3/4))} h(t,x)$.  We claim that
	\begin{equation}\label{e.c2271}
		\underline g(t,x) \leq g(t,x) \leq \overline g(t,x),
	\end{equation}
	for all $t\in [t_1 - 3/4,t_1]$, and that
	\begin{equation}\label{e.c2272}
		M_h(t_1)
			\leq M_h(t_1 - t_0)
			\lesssim M_h(t_1).
	\end{equation}
	We prove this in the sequel.  Let us momentarily assume both~\eqref{e.c2271} and~\eqref{e.c2272} are true and conclude the proof.
	
	Indeed, then we have that
	\[\begin{split}
		M_g(t_1)
			&\leq M_{\overline g}(t_1)
			= e^{3A/4} M_h(t_1)
			\leq e^{3A/4} M_h(t_1-t_0)\\
			&= e^{3A/4} \left(e^{A(3/4-t_0)} M_{\underline g}(t_1-t_0)\right)
			\leq e^{A(3/2-t_0)} M_g(t_1-t_0)
			\leq e^{3A/2} M_g(t_1-t_0),
	\end{split}\]
	and, similarly,
	\[\begin{split}
		M_g(t_1-t_0)
			&\leq M_{\overline g}(t_1-t_0)
			= e^{A(3/4 - t_0)} M_h(t_1-t_0)
			\lesssim e^{A(3/4 - t_0)} M_h(t_1)\\
			&= e^{A(3/4 - t_0)} \left(e^{3A/4}  M_{\underline g}(t_1)\right)
			\lesssim e^{3A/2} M_g(t_1).
	\end{split}\]
	This establishes the claim up to proving~\eqref{e.c2271} and~\eqref{e.c2272}.
	
	We first establish~\eqref{e.c2271}.  We show only the first inequality as the second is proved similarly.  Indeed, by the fact that $\partial_t \underline  g=\frac12\Delta \underline g-A \underline g$, we have
	\[\begin{split}
		\partial_t \underline g
			- \frac{1}{2} \Delta \underline g
			- \underline g \left(E_g - R*g\right)
			= - \underline g(A + E_g - R*g). 
	\end{split}\]
	Since $A + E_g\geq M_g$ by~\eqref{e.c10142} and $M_g \geq R*g$, $\underline g$ is a subsolution of~\eqref{e.main}.  Hence, by the comparison principle, $\underline g \leq g$ on $[t_1-3/4,\infty)\times \R$, as claimed.
	
	Now we establish~\eqref{e.c2272}.  The first inequality follows from the maximum principle; that is, the maximum of a solution to the heat equation is decreasing in time.  The second inequality follows indirectly from the parabolic Harnack inequality.  Indeed, let $x_m$ be the location of a maximum of $h(t_1-1/2,\cdot) = g(t_1-1/2,\cdot)$.  Then the parabolic Harnack inequality implies that
	\[
		M_h(t_1-1/2) = \sup_{x \in B_1(x_m)}h(t_1-1/2,x)
			\lesssim \inf_{x \in B_1(x_m)} h(t_1,x)
			\lesssim M_h(t_1).
	\]
	Using the maximum principle again, we find $M_h(t_1-t_0) \leq M_h(t_1-1/2)$ since $t_0 \in [0,1/2]$.  Combining this with the above inequality, establishes~\eqref{e.c2272}.  This concludes the proof.
\end{proof}

We are now able to utilize \Cref{p.harnack} to show that $g$, $u$, and $w$ are comparable; that is, we prove \Cref{l.maxima_guw}.

\begin{proof}[Proof of \Cref{l.maxima_guw}]
	The first and second inequalities in~\eqref{e.c351} are immediate by Young's inequality for convolutions: for all $(t,x)$,
	\[
		w(t,x)
			= \phi* u(t,x)
			\leq \|\phi\|_1 \|u(t,\cdot)\|_{\infty}
			= M_u(t).
	\]
	The proof of the second inequality is  the same and is thus omitted.
	
	Finally, we point out that the third inequality in~\eqref{e.c351} follows directly from~\eqref{e.c352} (when applied at $(t,x_t)$ that is the location of a spatial maximum of $g(t,\cdot)$).  In addition, the proof of~\eqref{e.c352} is the same for either right hand side ($w$ or $u$).  Thus, we show it only for  $u$.
	
	Fix $t \geq 1$ and any $x \in \R$.  Letting $I(r) = \int_{B_r} \phi$, we see that $I$ is continuous, $I(0) = 0$, and $I(\infty) = 1$. Hence, the intermediate value theorem implies the existence of
	$r_0>0$ such that
	\[
		\int_{B_{r_0}} \phi(y) dy = \frac{1}{2}.
	\]
	
	Fix any $y \in B_{r_0}$.  By \Cref{p.harnack} with $\eps = 1/2$ and $t_0 = 1/2$, we have
	\[
		g(t,x-y)^{1/2}
			\gtrsim \frac{g(t,x)}{\exp(C \sup_{s \in[t-1/2,t]} M_g(s) + \frac{C r_0^2}{1/2}) \sup_{s\in[t-1/2,t]} M_g(s)^{1/2}}.
	\]
	Using \Cref{l.maxima_bounded}, the exponential term in the denominator is bounded.  In addition, \Cref{l.maxima_change} implies that
	\[
		\sup_{s\in [t-1/2,t]} M_g(s)
			\lesssim M_g(t).
	\]
	Hence, we find, for all  $y\in B_{r_0}$,
	\be\label{e.c6306}
		g(t,x-y)
			\gtrsim \frac{g(t,x)^2}{M_g(t)}.
	\ee
	
	We now use this inequality to conclude.  Recalling the definition of $u$ and applying~\eqref{e.c6306} yields
	\[
		u(t,x)
			\geq \int_{B_{r_0}} \phi(y) g(t,x-y)dy
			\gtrsim \int_{B_{r_0}} \frac{g(t,x)^2}{M_g(t)} \phi(y) dy
			= \frac{g(t,x)^2}{2 M_g(t)},
	\]
	which concludes the proof.
\end{proof}


%
%

\subsubsection{The lower bound on the integral term: the proof of \Cref{l.integral_bound}}

Having established the relationship between $w$, $u$, and $g$, we are now in a position to prove the main technical lemma in the proof of \Cref{p.decay} apart from one final technical lemma.  This lemma shows that if $g(t,x_1)$ is sufficiently small compared to $g(t,x_0)$, then $w(t,x_1)$ and $u(t,x_1)$ will be also be small compared to $g(t,x_0)$.  We state this lemma now and prove it in the sequel.

\begin{lemma}\label{l.far_level_sets}
	Suppose that $d=1$ and $R$ is continuous and satisfies~\eqref{e.R}. For all $t\geq 1$, if $\frac{g(t,x_0)^2}{g(t,x_1) M_g(t)}$ is sufficiently large, depending only on $R$ and $M_g(t)/g(t,x_0)$, then
	\[
		w(t,x_1), u(t,x_1) \leq \frac{1}{2} g(t,x_0).
	\]
\end{lemma}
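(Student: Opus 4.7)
The plan is to split both convolutions into a ``near'' and a ``far'' piece around the point $x_1$, and to bound each separately. Writing $K$ for either $\phi$ or $R$ (both are nonnegative $L^1$ functions with unit mass) and noting that
\[
	u(t,x_1),\,w(t,x_1) = \int K(y)\, g(t,x_1-y)\,dy
\]
for the respective choices of $K$, pick $r>0$ and decompose the integral as $\int_{B_r} + \int_{B_r^c}$.

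For the far piece, I would simply use the crude bound $g(t,x_1-y) \leq M_g(t)$ to get
\[
	\int_{B_r^c} K(y)\, g(t,x_1-y)\,dy
		\leq M_g(t)\int_{B_r^c} K(y)\,dy.
\]
Since $K \in L^1(\R)$ and $\int K = 1$, one can choose $r = r(R, M_g(t)/g(t,x_0))$ large enough so that $\int_{B_r^c} K(y)\,dy \leq g(t,x_0)/(4 M_g(t))$, making this piece at most $\tfrac14 g(t,x_0)$.

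For the near piece, the key tool is the local-in-time Harnack inequality (Proposition~\ref{p.harnack}). Apply it with $t_0 = 1/2$ and a small exponent $\eps \in (0,1/2)$, swapping the roles of the two spatial arguments, to obtain, for $y \in B_r$,
\[
	g(t, x_1-y)
		\lesssim \exp\bigl\{C_\eps\,\|g\|_{L^\infty([t-1/2,t]\times\R)}\bigr\}\,
		e^{2C_\eps r^2}\, g(t,x_1)^{1-\eps}\,\|g\|_{L^\infty([t-1/2,t]\times\R)}^\eps.
\]
Since $t\geq 1$, Lemma~\ref{l.maxima_bounded} and Lemma~\ref{l.maxima_change} bound $\|g\|_{L^\infty([t-1/2,t]\times\R)}$ by a constant multiple of $M_g(t)$, and in particular the exponential prefactor is bounded independently of $t$. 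Hence, after integrating against $K$ over $B_r$ (using $\int K \leq 1$),
\[
	\int_{B_r} K(y)\, g(t,x_1-y)\,dy
		\lesssim e^{C r^2}\, g(t,x_1)^{1-\eps}\, M_g(t)^{\eps}.
\]

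It remains to verify this near piece is at most $\tfrac14 g(t,x_0)$. Setting $\lambda = g(t,x_0)^2/(g(t,x_1) M_g(t))$, we have $g(t,x_1) = g(t,x_0)^2/(\lambda M_g(t))$, so
\[
	e^{Cr^2}\, g(t,x_1)^{1-\eps}\, M_g(t)^{\eps}
		= e^{Cr^2}\, \lambda^{-(1-\eps)}\, g(t,x_0)\, \left(\frac{g(t,x_0)}{M_g(t)}\right)^{1-2\eps}.
\]
Since $\eps<1/2$ and $g(t,x_0) \leq M_g(t)$, the last factor is at most $1$, so requiring $\lambda^{1-\eps} \geq 4 e^{Cr^2}$ closes the estimate. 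As $r$ was chosen in terms of $R$ and $M_g(t)/g(t,x_0)$, the required lower bound on $\lambda$ depends only on these quantities, as claimed. The same argument applies verbatim to $w$ with $K = R$ (which is also nonnegative, continuous, and has unit mass), finishing the proof.

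The only mildly delicate point is ensuring that the prefactor $\exp\{C_\eps \|g\|_{L^\infty([t-1/2,t]\times\R)}\}$ in the Harnack inequality is uniformly bounded; this is where Lemmas \ref{l.maxima_bounded} and \ref{l.maxima_change} are essential, since without them the prefactor could in principle blow up as $t \to \infty$ and ruin the near-piece estimate.
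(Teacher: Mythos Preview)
Your proof is correct and follows essentially the same approach as the paper: a near/far decomposition of the convolution around $x_1$, with the far piece controlled by the tail of $K$ and the near piece controlled via the local-in-time Harnack inequality (\Cref{p.harnack}) together with \Cref{l.maxima_bounded,l.maxima_change}. The only structural difference is that the paper defines $r$ implicitly as the radius of the sub-level set $\{g \leq g(x_0)/4\}$ around $x_1$ and uses Harnack to show $r$ is large, whereas you fix $r$ up front from the tail of $K$ and apply Harnack directly to bound $g(t,x_1-y)$ in terms of $g(t,x_1)$; both routes arrive at the same estimate.
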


We now prove \Cref{l.integral_bound}.

\begin{proof}[Proof of \Cref{l.integral_bound}]
Since time plays no role here, except to allow us to apply the lemmas in \Cref{s.maxima}, we omit $t\geq 1$ notationally for the remainder of the proof.  We let $A> 1$ be a constant to be determined.  There are two cases to consider, and $A$ is determined in the second case.

{\bf Case one: $M_g \leq A E_g$.}  
The constant $A$ does not play a role in this case, and, as such, we absorb it into the constants in the $\lesssim$ notation.

Fix $1 = B_1 <B_2 < B_3<B_4$ to be determined.  We may find $x_1 < x_2 < x_3< x_4$ such that
\be\label{e.B_i_choices1}
\begin{aligned}
	&g(x_i) = \frac{E_g}{B_i}
		&\quad\text{ for each } i \in \{1,2,3,4\},\\
	&g(x) \leq \frac{E_g}{B_i}
		&\quad \text{ for all } x \in[x_i, x_4]
			\text{ and each } i \in \{1,2\}\\
	&g(x) \geq \frac{E_g}{B_i}
		&\quad \text{ for all } x \in[x_1, x_i]
			\text{ and each } i \in \{3,4\}\\
\end{aligned}
\ee
This can be achieved by making the choices
\begin{align*}
	&x_1 = \sup\{x \in \R : g(x) = E_g\}
		&\quad
	&x_4 = \inf\{x \geq x_1 : g(x) = E_g/B_4\}\\
	&x_2 = \sup\{x \leq x_4: g(x) = E_g/B_2\}
		&
	&x_3 = \inf\{x \geq x_1: g(x) = E_g/ B_3\}.
\end{align*}
Roughly, $x_1$ and $x_2$ are the ``last times'' $g$ takes the values $E_g/B_1$ and $E_g/B_2$, respectively, while $x_3$ and $x_4$ are the ``first times'' after $x_2$ that $g$ takes the values $E_g/B_3$ and $E_g/B_4$, respectively.  See \Cref{f.level_sets}.

\begin{figure}
	
	\begin{overpic}[scale=.325,angle=90]
		{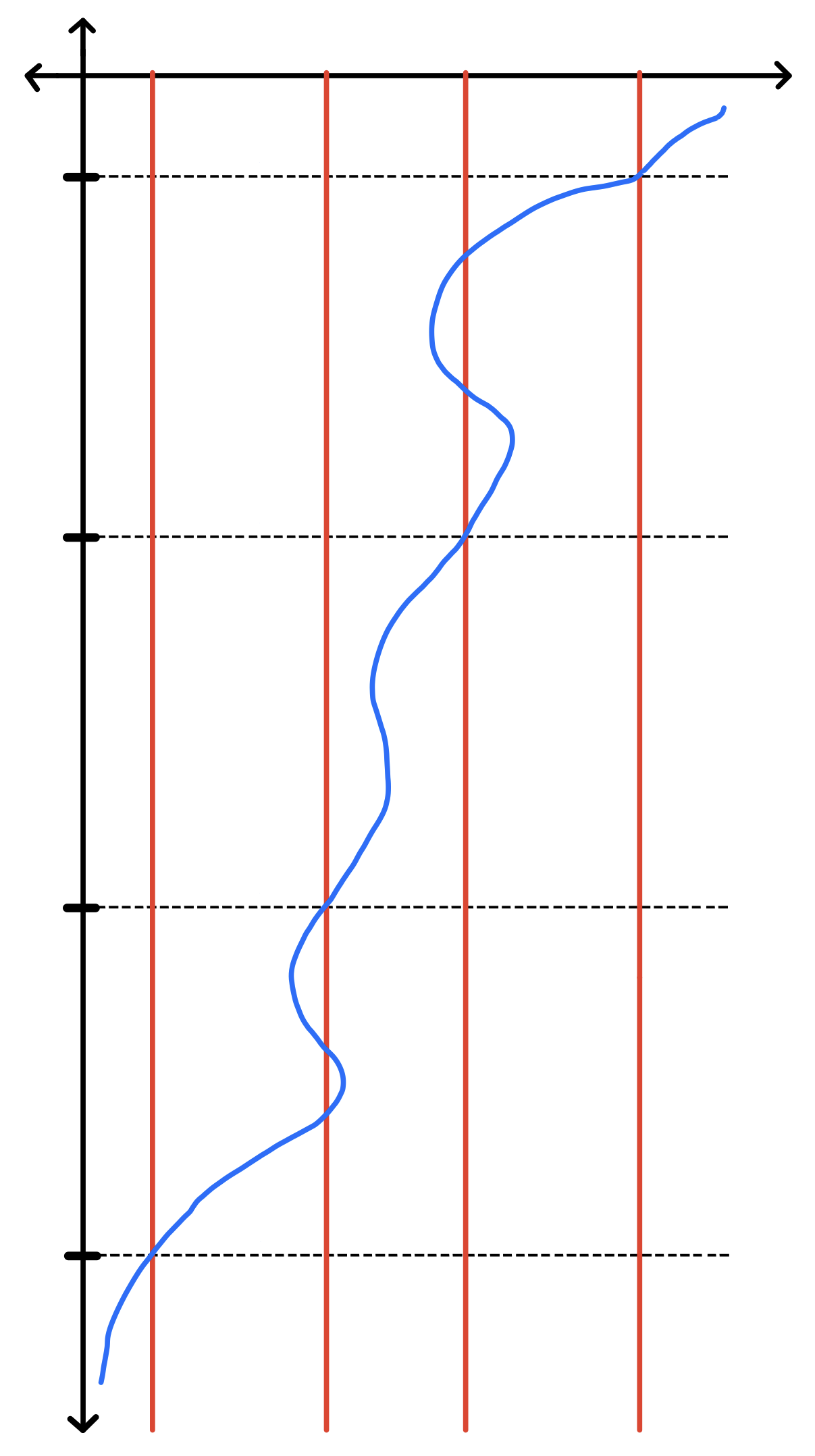}
		\put(0,9.5){\color{red} $\frac{E_g}{B_4}$}
		\put(0,22){\color{red} $\frac{E_g}{B_3}$}
		\put(0,32){\color{red} $\frac{E_g}{B_2}$}
		\put(0,45){\color{red} $\frac{E_g}{B_1}$}
		\put(9,50){\color{blue} $g$}
		\put(11.5,2){$x_1$}
		\put(35.5,2){$x_2$}
		\put(61,2){$x_3$}
		\put(85,2){$x_4$}
	\end{overpic}
	\caption{A cartoon depicting the relationship of $x_i$, $B_i$, and $g$.}\label{f.level_sets}
\end{figure}


Fix any $x \in [x_2,x_4]$.  Applying \Cref{l.far_level_sets}, we have that, choosing $B_2$ so that
\[
	\frac{g(x_1)^2}{g(x) M_g}
		= \frac{E_g^2}{g(x) M_g}
		\geq \frac{B_2}{A}
\]
is sufficiently large, depending only on $M_g/g(x_0) = M_g/E_g \leq A$, then
\[
	w(x) \leq \frac{g(x_1)}{2}
		= \frac{E_g}{2}.
\]
We conclude that $E_g - w \geq E_g/2$.  Hence, 
%
%
%
\begin{equation}\label{e.c2282}
	\int_{x_2}^{x_4} g (E_g - w)^2 dx
		\geq \int_{x_2}^{x_4} \frac{E_g}{B_4} \frac{E_g^2}{4} dx
		= \frac{E_g^3}{4B_4} |x_4 - x_2|
		\gtrsim E_g^3 |x_4 - x_2|.
\ee
Here we used that $g \geq E_g/B_4$ on $[x_2,x_4]$.  On the other hand, we have
\be\label{e.c361}
	\begin{split}
		|u(x_2) - u(x_4)|^2
			&= \left(\int_{x_2}^{x_4} \partial_x u dx\right)^2
			\leq |x_4 - x_2| \int_{x_2}^{x_4} |\partial_x u|^2 dx\\
			&\leq |x_4 - x_2| \int |\partial_x u|^2 dx
			= |x_4 - x_2| D_g.
	\end{split}
\ee

We now seek a lower bound on $|u(x_2) - u(x_3)|$.  Using \Cref{l.maxima_guw} to bound $u(x_2)$ from below, we find
\be\label{e.c884}
	u(x_2)
		\gtrsim \frac{g(x_2)^2}{M_g}
		= \frac{E_g^2}{M_g B_2^2}
		\gtrsim \frac{E_g}{AB_2^2}.
\ee
Choosing now $B_3$ to be $A B_2^2$ multiplied by a universal constant, we have
\be\label{e.c884}
	u(x_2)
		\geq \frac{E_g}{B_3}.
\ee
Next, we again apply \Cref{l.far_level_sets} to conclude that, choosing $B_4$ so that
\[
	\frac{g(x_3)^2}{g(x_4) M_g}
		= \frac{E_g}{M_g} \frac{B_4}{B_3^2}
		\geq \frac{1}{A} \frac{B_4}{B_3^2}
\]
is sufficiently large, depending only on $M_g/g(x_3) \leq A B_2$, then
\be\label{e.c885}
	u(x_4)
		\leq \frac{g(x_3)}{2}
		= \frac{E_g}{2 B_3}.
\ee
Putting the bounds~\eqref{e.c884} and~\eqref{e.c885} together, we find
\[
	|u(x_2) - u(x_4)|
		\geq \frac{E_g}{2B_3}.
\]

Including the above inequality in~\eqref{e.c361}, we find
\be\label{e.c2283}
	E_g^2
		\lesssim |x_4 - x_2| D_g,
\ee
where we have absorbed the dependence of the $B_i$ into the $\lesssim$ notation.  Combining~\eqref{e.c2282} and~\eqref{e.c2283} finishes the proof in case one.  Note that, while the estimate above depends on $A$, in the second case, we choose $A$ to be a fixed large number depending only on $R$ and independent of $E_g$.

{\bf Case two: $M_g \geq A E_g$.}  
The argument in this case is similar; however, instead of bounding $w$ {\em above} by $E_g/2$, we bound it {\em below} by $3E_g/2$.  Indeed, let $1 = B_1< B_2 < B_3$ and find $x_1 < x_2 < x_3$ such that
\[
	\begin{aligned}
	&g(x_i) = \frac{M_g}{B_i}
		\quad &\text{for each } i \in \{1,2,3\},\\
	&g(x) \leq \frac{M_g}{ B_2}
		\quad &\text{ for all } x \in[x_2,x_3], &\quad\text{ and}\\
	&g(x) \geq \frac{M_g}{B_3}
		\quad &\text{ for all } x \in [x_1,x_3].
\end{aligned}
\]
Note that $B_1 < B_2 < B_3$.

Arguing exactly as in~\eqref{e.c361} in the previous case, we find
\[
	|u(x_1) - u(x_3)|^2
		\leq |x_3 - x_1| D_g.
\]
From \Cref{l.maxima_guw}, we have
\be\label{e.c893}
	u(x_1)
		\gtrsim \frac{g(x_1)^2}{M_g}
		= M_g.
\ee
The constant above does not depend on $A$ or on any of the $B_i$'s.  Hence, we may select $B_2$ such that $g(x_2) \leq u(x_1)$.  

Then, after increasing $B_3$ such that
\[
	\frac{g(x_2)^2}{g(x_3) M_g}
		= \frac{B_3}{B_2^2}
\]
is sufficiently large, depending only on $M_g/g(x_2) = B_2$, \Cref{l.far_level_sets} implies that
\[
	u(x_3)
		\leq \frac{g(x_2)}{2}
		\leq \frac{u(x_1)}{2}.
\]
Using this and~\eqref{e.c893}, we conclude that
\be\label{e.c892}
	M_g^2
		\lesssim \left|\frac{u(x_1)}{2}\right|^2
		\leq |u(x_3)-u(x_1)|^2
		\leq |x_3 - x_1| D_g.
\ee

Applying \Cref{l.maxima_guw}, we have that, for $x\in [x_1,x_3]$,
\[
	w(x)
		\gtrsim \frac{g(x)^2}{M_g}
		\geq \frac{M_g}{B_3^2},
\]
In the second inequality we used that $g\geq M_g/B_3$ on $[x_1,x_3]$.  Since $E_g \leq M_g/A$, then, choosing $A$ such that $A/ B_3^2$ is sufficiently large, we have $w(x) - E_g \gtrsim M_g$ for all $x \in [x_1, x_3]$.  Thus, we find
\be\label{e.c891}
	\int_{x_1}^{x_3} g (E_g - w)^2 dx
		\gtrsim M_g^3 |x_1 - x_3|.
\ee

Combining~\eqref{e.c891} and~\eqref{e.c892} yields
\[
	\int g (E_g - h)^2 dx
		\gtrsim \frac{M_g^5}{D_g}.
\]
The proof is finished in this case by using the fact that $M_g \geq A E_g$.  Thus, we have concluded the proof in all cases.
\end{proof}

We now finish this section by proving the final lemma, \Cref{l.far_level_sets}.  The idea behind this proof is that, by \Cref{p.harnack}, if the ratio $g(x_0)^2/g(x_1) M_g$ is large enough, then $g$ is closer to $g(x_1)$ than $g(x_0)$ on a large set.  Plugging this into the convolutions defining $u$ and $w$ yields the result.

\begin{proof}[Proof of \Cref{l.far_level_sets}]
	We show the result for $w$.  The proof is exactly the same for $u$ and, hence, we omit it.  We assume without loss of generality that $x_0 < x_1 = 0$ since the equation is invariant by reflection and translation.  We suppress the time dependence in the proof as $t$ plays no role.
	
	Let
	\be\label{e.c881}
		\bar\varepsilon = \frac{g(0) M_g}{g(x_0)^2}.
	\ee
	We may assume that $\bar\varepsilon < 1/4$.  Define
	\[
		r = \sup \{ s > 0 : g(y) \leq \frac{g(x_0)}{4} \text{ for all } y \in [-s,s]\}.
	\]
	From~\eqref{e.c881} and the fact that $\bar\varepsilon <1/4$, $g(0) < g(x_0)/4$, we conclude that $r>0$ and  $g \leq g(x_0)/4$ on $[-r,r]$.  In addition, by continuity, either $g(r) = g(x_0)/4$ or $g(-r) = g(x_0)/4$.  We assume the former, although the proof is similar in the latter case.
	
	Using this, we estimate the below directly:
	\be\label{e.c882}
	\begin{split}
		w(0)
			&= \int R(-y) g(y) dy
			\leq \int_{[-r,r]} R(-y) \frac{g(x_0)}{4} dy
				+ \int_{[-r,r]^c} R(-y) M_g dy\\
			&\leq \frac{g(x_0)}{4}
				+ 2M_g \int_r^\infty R(y) dy.
	\end{split}
	\ee
	The final inequality follows from the fact that $R$ is symmetric\footnote{Symmetry is not necessary for this result, but it simplifies the notation in the proof.}.
	
	We seek an estimate on the second term on the last line of~\eqref{e.c882}.  Indeed, we wish to show that the integral term is small, which corresponds to $r$ being large.  In order to conclude this, we appeal to \Cref{p.harnack} with $\eps = 1/2$ and $t_0 = 1/2$, along with \Cref{l.maxima_change}, to find
	\[
		\frac{1}{4\sqrt \eps}
			= \frac{1}{4} \frac{g(x_0)}{\sqrt{g(0) M_g}}
			= \frac{g(r)}{\sqrt{g(0) M_g}}
			\lesssim \exp\left\{ C(M_g + r^2)\right\}
			\lesssim \exp\left\{C r^2\right\}.
	\]
	The last line follows from \Cref{l.maxima_bounded}.  Choosing $\bar\varepsilon$ sufficiently small, makes $r$ sufficiently large that
	\[
		\int_r^\infty R(y) dy
			\leq \frac{g(x_0)}{8 M_g}.
	\]
	Plugging this into~\eqref{e.c882}, we conclude that $w(0) \leq \frac{g(x_0)}{2}$, which finishes the proof.
\end{proof}

\section{Long time dynamics in one dimension}\label{s.weak}


For technical reasons that become clear in the sequel, we notice that, since~\eqref{e.main} is autonomous, we may shift the initial data in time and assume that $g(1,\cdot) = g_0 \in C_c(\R)$ without losing generality.  In addition, recall that here we make the choice $R = \delta$ whence~\eqref{e.main} becomes
\be\label{e.main2}
	\begin{cases}
		g_t = \frac{1}{2} \Delta g + g \left( E_g - g\right)
			\qquad&\text{in } (1,\infty)\times \R,\\
		g = g_0
			\qquad&\text{on } \{1\}\times \R.
	\end{cases}
\ee

We begin with a heuristic argument that motivates the constants $\thc$ and $\cc$ defined in~\eqref{e.cc_thc}.  This argument also yields the main rescalings and objects of study for us and allows to outline our strategy and the main difficulties encountered in the proof.

\subsection{The rescalings and the heuristic argument}\label{s.heuristic}

We appeal to two different scalings revealing different features of the dynamics.   One suggests that $g$, properly rescaled, converges to a constant multiple of an indicator function while the second unveils the exact constant involved.  We call the rescaled functions, respectively, $h$ and $u$.  Importantly, and somewhat surprisingly, $u$ satisfies an equation reminiscent of the Fisher-KPP equation.

We begin with the new variables $\tau \sim t^{1/3}$ and $y \sim x/t^{2/3}$; that is, define
\be\label{e.h}
	h(\tau, y)
		= (\tau+1)^2 g((\tau+1)^3, y (\tau+1)^2).
\ee
Notice that this respects the scaling in $x$ and the decay of $g$ shown in \Cref{t.decay} and \Cref{p.decay}.  Thus, we have that $C^{-1} < M_h, E_h < C$.  We see that
\be\label{e.heq}
	\partial_\tau h
		= \frac{3}{2 (\tau+1)^2} \Delta h + 3 h(E_h - h) + \frac{2}{\tau+1} \left(h + y \partial_y h\right).
\ee
The meaning of the time shift of the initial conditions to be defined at $t=1$, above, is now clear:~\eqref{e.heq} does not degenerate as $\tau \searrow 0$ and the initial data of $h$ is defined at $\tau = 0$.

Notice that, heuristically, this indicates that if $h(\tau,y)\to H(y)$ as $\tau\to\infty$, then $H (E_{H} - H) \equiv 0$, implying that $H$ has to be of the form $|A|^{-1} \1_A$ for some set $A$ (recall that $g$ is a probability measure and, hence, $h$ is one too).  Of course, we expect $A = [-c,c]$ for some $c>0$.

Unfortunately, as the Laplacian term in~\eqref{e.heq} degenerates as $\tau\to\infty$, we cannot lean on compactness in order to obtain convergence of $h$.  Instead we need to obtain sharp estimates on $h$ away from the boundary points $y = \pm c$, which leads us to the next scaling.

We pull apart the dynamics of $g$ near $c t^{2/3}$ (i.e., $h$ near $c$) to see the transition between $(2c)^{-1} t^{-2/3}$ and $0$.  Since the coefficient in front of the Laplacian is $O(\tau^{-2})$, we expect that this transition layer has width $O(\tau^{-1})$.  Hence, for any $c\in \R$, we let
\be\label{e.u}
	u_c(\tau, z)
		= h(\tau, c + z (\tau+1)^{-1}).
\ee
Using~\eqref{e.heq}, we find
\be\label{e.ueq}
	\partial_\tau u_c
		= \frac{3}{2} \Delta u_c + 3 u_c( E_h - u_c) + 2c \partial_z u_c + \frac{2}{\tau+1} u_c  + \frac{z}{\tau+1}   \partial_z u_c.
\ee

We expect that, as $\tau\to\infty$, $E_h(\tau) \to \theta$ for some $\theta \geq 0$ and $h(\tau,y) \to H(y) = (2c)^{-1} \1_{[-c,c]}$.  For these to be consistent, we have $\theta = (2c)^{-1}$.  In addition, looking at~\eqref{e.ueq} with $\tau = \infty$, we formally find
\[
	- c \partial_z u_c = \frac{3}{4} \Delta u_c + \frac{3\theta}{2} u_c\left( 1 - \frac{u_c}{\theta}\right).
\]
This is the equation for a traveling wave solution (of speed $c$) of the Fisher-KPP equation.  Although there is a family of traveling wave solutions, whose speeds make up an infinite half-line, we expect the correct long-time dynamics to correspond to the minimal speed wave because our initial data is compactly supported (see, e.g.~\cite{Fisher,KPP}).  The minimal speed is given by a formula in terms of the coefficients (see, e.g.,~\cite[(1.25)]{Xin_book}), which, in our setting, yields
\[
	c = 2 \sqrt{\frac{3}{4} \cdot \frac{3\theta }{2}}= \frac{3}{\sqrt{2}} \sqrt{\theta}.
\] 
Recalling that $\theta = (2c)^{-1}$, the unique solution to the two equations is $(c,\theta) = (\cc,\thc)$, where $\cc$ and $\thc$ are defined in~\eqref{e.cc_thc}.

We now discuss the difficulties in establishing the above heuristics for $u_c$.  The first issue is a subtle one.  While the last two terms~\eqref{e.ueq} appear to be error terms, this is only true for the last term with the correct choice of $c$, that is, {\em only in the correct moving frame}.  Indeed, the coefficient of last term is approximately $z/\tau$.  If the transition from $\theta$ to $0$ occurs at $\tilde c \tau$ with $\tilde c \neq c$, then non-trivial behavior for $u_c$ occurs at $z\approx (\tilde c-c) \tau$.  In this case, both $\partial_z u_c$ and $z/(\tau+1)$ are non-trivial at $(\tilde c - c) \tau$.  Thus, if we have changed to the ``incorrect'' moving frame,
\[
	\liminf_{\tau\to\infty} \left\| \frac{z}{\tau+1} \partial_z u_c\right\|_\infty >0,
\]
and, hence, the heuristics above are no longer useful.  It is, thus, crucial to work in the frame with $c=\cc$.

Although the equation~\eqref{e.ueq} is a non-local Fisher-KPP type equation similar to that considered in~\cite{ABVV, BerestyckiNadinPerthameRyzhik, Britton, HamelRyzhik}, which takes the form
\be\label{e.nlfkpp1}
	\partial_\tau u
		= D \Delta u + u(r - \phi * u),
\ee
for some non-negative function $\phi$ and some constants $D, r>0$, there is an important difference that prevents us from using techniques developed for~\eqref{e.nlfkpp1} to study~\eqref{e.ueq}: while $\phi*u(x) \sim u(x)$ for each $x$ (when appropriately interpreted), $E_h \not\sim u_c(x)$ (recall that $E_h$ is independent of $x$ and is expected to converge to a constant).  As such, the role of the nonlocality in the two equations is quite different.  On a more technical level, this difference is manifested in the following way.  The spreading speed in~\eqref{e.nlfkpp1} is identified using the linearization of the equation around zero $\partial_\tau u = D \Delta u + ru$, in which the nonlocal term does not appear.  
On the other hand, the spreading speed of $u_c$ will be almost entirely determined by the long time behavior of the nonlocal term $E_h$.  
As a result, we are not able to draw inspiration from the techniques introduced  in the previous work  to deal with the nonlocal term of~\eqref{e.nlfkpp1}.

The next complication is due to the coupling between $E_h$ and $u_c$.  Their codependence makes it impossible to first show convergence of $E_h$ and then analyze $u_c$ when $\tau\gg 1$ and $E_h$ is almost constant.  On the other hand, one might be tempted to consider $E_h$ to be a time dependent prescribed coefficient; however, there is little robust theory on how the front of a reaction diffusion equation depends on the coefficients when their oscillations have no specific structure such as periodicity.  This is in part because one can construct coefficients to have a diverse range of fronts if the coefficients oscillate in a complicated manner~\cite{BerestyckiHamelNadin}.  Indeed, there are even quite simple settings where there is no defined spreading speed due to oscillations of the coefficients~\cite{GarnierGilettiNadin}.


To overcome this, we derive a differential inequality that shows that $E_h$ can only increase slowly (although we do not rule out it decreasing arbitrarily quickly).  Focusing our analysis on the resulting long upslopes, we can work with an almost constant $E_h$ term.  In this case, if $E_h$ is too large, it will have been too large for a large time interval beforehand.  Then the front, on that time interval, will move too quickly due to the large $E_h$ term in~\eqref{e.ueq}.  This corresponds to $h$ having wide support and contradicts the fact that the integral of $h$ is one.  Similarly, if $E_h$ is too small, it will be small for a long time afterwards.  Then the front, on that time interval, will move too slowly, corresponding to $h$ having too narrow of support and contradicting the fact that the integral of $h$ is one.

Before proceeding with the proof, we note that the rescalings above, combined with \Cref{p.decay} and the fact that $\int g \dx = \int h \dy = 1$, yield the following bounds, used often below.
\begin{lemma}\label{l.rescaled_bounds}
	For any $c\in \R$, We have, for all $\tau \geq 1$,
	\[
		1
			\lesssim E_h(\tau)
			\leq M_h(\tau) = M_{u_c}(\tau)
			\lesssim 1.
	\]
\end{lemma}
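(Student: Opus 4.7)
The claim is essentially a bookkeeping exercise translating the known bounds on $g$ from \Cref{t.decay} and \Cref{p.decay} into the rescaled coordinates, so the plan is simply to collect the pieces in the right order.

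First I would verify that $h(\tau,\cdot)$ is a probability density for every $\tau \geq 0$. Using definition \eqref{e.h} and the change of variables $x = y(\tau+1)^2$, one sees $\int h(\tau,y)\,dy = \int g((\tau+1)^3, x)\,dx = 1$, where the last equality is the mass conservation for $g$ discussed in the introduction. Once this is in hand, H\"older's inequality immediately yields the middle inequality $E_h(\tau) = \int h^2 \,dy \leq \|h(\tau)\|_\infty \int h(\tau,y)\,dy = M_h(\tau)$. The identity $M_h(\tau) = M_{u_c}(\tau)$ is then immediate from \eqref{e.u} since $u_c$ is just a spatial translation (and relabeling) of $h$.

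Next I would handle the upper bound $M_h(\tau) \lesssim 1$. By \Cref{p.decay}, $\|g(t)\|_\infty \lesssim t^{-2/3}$ for all $t \geq 1$, so with $t = (\tau+1)^3 \geq 1$ we get
\[
M_h(\tau) = (\tau+1)^2\, \|g((\tau+1)^3)\|_\infty \lesssim (\tau+1)^2 \cdot (\tau+1)^{-2} = 1,
\]
which covers the upper bound. By the previous paragraph, this also gives $E_h(\tau) \lesssim 1$ and $M_{u_c}(\tau) \lesssim 1$.

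Finally, for the lower bound $E_h(\tau) \gtrsim 1$, I would use the same change of variables to compute
\[
E_h(\tau) = \int h(\tau,y)^2 \,dy = (\tau+1)^2 \,\|g((\tau+1)^3)\|_2^2,
\]
and then invoke the lower bound $\liminf_{t\to\infty} t^{2/3}\|g(t)\|_2^2 > 0$ from \Cref{t.decay}. This yields $\|g(t)\|_2^2 \gtrsim t^{-2/3}$ for all $t$ sufficiently large; on any bounded interval $[1,T]$ the $L^2$ norm is bounded below by continuity and positivity, so the bound extends to all $t \geq 1$. Substituting $t = (\tau+1)^3$ gives $E_h(\tau) \gtrsim 1$. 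The main (and only) ``obstacle'' here is simply being careful that the asymptotic lower bound in \Cref{t.decay} can be upgraded to a uniform one for $\tau \geq 1$, but this is clear from the continuity of $g$ in time together with the strict positivity of $\|g(t)\|_2$ on any compact time interval.
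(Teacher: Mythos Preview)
Your proof is correct and matches the paper's intent. The paper does not give a detailed argument for this lemma; it simply remarks that the bounds follow from the rescalings, \Cref{p.decay}, and the fact that $\int g\,dx = \int h\,dy = 1$. You have filled in exactly those details: mass conservation for $h$ via change of variables, the H\"older step for $E_h \leq M_h$, the identification $M_h = M_{u_c}$ from~\eqref{e.u}, the upper bound from \Cref{p.decay}, and the lower bound on $E_h$ from \Cref{t.decay} together with a continuity argument on compact time intervals. The only remark is that the paper's one-line justification cites \Cref{p.decay} alone, whereas the lower bound $E_h \gtrsim 1$ really needs the conclusion $\liminf t^{2/3}\|g(t)\|_2^2 > 0$ of \Cref{t.decay}; you correctly identify and invoke this.
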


\subsection{The weak bounds: proof of \Cref{p.weak}}

A key step in establishing the strong bounds in \Cref{t.strong} is first establishing the weaker bounds in \Cref{p.weak}.  We show this argument here.

\subsubsection{The lower bound on the limsup}
We begin with the lower bound on the lim sup.  We require a simple relationship between $M$ and $E$ for large times, proved at the end of this section.

\begin{lemma}\label{l.limsup}
	We have $\displaystyle
		\limsup_{\tau\to\infty} M_h(\tau) = \limsup_{\tau\to\infty} E_h(\tau).$
\end{lemma}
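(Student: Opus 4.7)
The plan is to obtain both directions by elementary means. One inequality is immediate: since $\int h(\tau,\cdot)\dy = 1$ and $R=\delta$, H\"older gives
\[
	E_h(\tau) = \int h(\tau,y)^2\dy \leq \|h(\tau)\|_\infty \int h(\tau,y)\dy = M_h(\tau),
\]
so $\limsup E_h \leq \limsup M_h$.

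For the reverse inequality, I would first convert a pointwise differential inequality for $M_g$ into one for $M_h$. Evaluating~\eqref{e.main2} at a spatial maximum $x_t$ of $g(t,\cdot)$ (which exists by the Gaussian decay from~\eqref{e.c8101}) and using $\Delta g(t,x_t)\leq 0$ together with $R=\delta$ gives, in the upper-derivative sense,
\[
	\dot M_g(t) \leq M_g(t)\bigl(E_g(t)-M_g(t)\bigr).
\]
Using the scalings $M_h(\tau) = (\tau+1)^2 M_g((\tau+1)^3)$ and $E_h(\tau) = (\tau+1)^2 E_g((\tau+1)^3)$, together with $dt/d\tau = 3(\tau+1)^2$, a short calculation converts this into
\[
	\dot M_h(\tau)
		\leq \frac{2M_h(\tau)}{\tau+1} + 3 M_h(\tau)\bigl(E_h(\tau) - M_h(\tau)\bigr).
\]
By \Cref{l.rescaled_bounds}, the first term on the right is $O(1/\tau)$.

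Now argue by contradiction. Suppose $\alpha := \limsup M_h > \limsup E_h =: \beta$ and pick $\delta>0$ with $\alpha-\beta > 3\delta$. Choose $\tau_0$ so large that $E_h(\tau) < \beta+\delta$ and $2M_h(\tau)/(\tau+1) < \eta$ for all $\tau\geq\tau_0$, where $\eta>0$ is to be fixed. Whenever $M_h(\tau)\geq \alpha-\delta$ and $\tau\geq\tau_0$,
\[
	\dot M_h(\tau) \leq \eta + 3(\alpha-\delta)\bigl((\beta+\delta)-(\alpha-\delta)\bigr) \leq \eta - 3(\alpha-\delta)\delta.
\]
Choosing $\eta$ small enough, the right side is bounded above by some $-c<0$.

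The final step is to extract the contradiction. The set $S := \{\tau\geq\tau_0 : M_h(\tau) > \alpha-\delta\}$ is open by continuity of $M_h$ (which follows from parabolic regularity of $g$ combined with the Gaussian tail bound from \Cref{t.decay}). On every connected component $(a,b)$ of $S$, continuity forces $M_h(a) = \alpha-\delta$, while the strict decrease $\dot M_h \leq -c$ on $(a,b)$ would force $M_h(\tau) < \alpha-\delta$ for $\tau$ slightly larger than $a$, contradicting $(a,b)\subset S$. Hence $S=\emptyset$, so $M_h(\tau) \leq \alpha-\delta$ for all $\tau\geq\tau_0$, contradicting $\limsup M_h = \alpha$. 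The main delicate point is justifying the pointwise differential inequality for $M_h$ (an a.e.\ derivative or Dini derivative statement via Danskin's theorem), but this is standard once smoothness and attainment of the maximum are known.
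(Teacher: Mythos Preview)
Your argument is correct and rests on the same key idea as the paper: evaluate the equation at a spatial maximum to obtain the differential inequality $\dot M_h \leq \frac{2M_h}{\tau+1} + 3M_h(E_h - M_h)$, then exploit that $E_h - M_h$ is eventually strictly negative under the contradictory hypothesis. The paper packages the conclusion differently, splitting into cases according to whether $M_h$ is eventually monotone (extracting a sequence $\tau_n$ along which either $M_h$ has a local maximum or $\partial_\tau M_h(\tau_n)\to 0$, then passing to the limit in the equation for $u_{c=0}$), whereas you run a direct barrier-type contradiction on the open set $S=\{M_h>\alpha-\delta\}$. Your route is arguably cleaner in that it avoids the case split and the auxiliary sequence; the paper's version makes the limiting inequality $0\leq 3\theta_M(\theta_E-\theta_M)$ explicit, which is conceptually transparent.

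One small point to patch in your write-up: the sentence ``continuity forces $M_h(a)=\alpha-\delta$'' tacitly assumes $a>\tau_0$. If the leftmost component of $S$ begins at $a=\tau_0$, you cannot conclude $M_h(\tau_0)=\alpha-\delta$; instead use that $\dot M_h\leq -c$ on that component together with the uniform lower bound on $M_h$ (\Cref{l.rescaled_bounds}) to force the component to terminate at some finite $b$ with $M_h(b)=\alpha-\delta$, after which your argument shows no further components exist. This still yields $M_h\leq\alpha-\delta$ for all $\tau\geq b$, giving the desired contradiction.
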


The need for this lemma is the following.  We argue by contradiction, assuming that $\limsup E_h$ is small.  \Cref{l.limsup} implies that $M_h$ is eventually small as well.  However, to be consistent with the requirement that $\int h \dy = 1$, this forces $h$ to be nontrivial near some $y_0 > \cc$.  Using the connection to~\eqref{e.ueq}, this corresponds to front propagation of speed $y_0$ starting from compactly supported initial data; however, $y_0$ is greater than the speed $2 \sqrt{(3/4)\cdot(3E_h/2)} = 2 \sqrt{9 E_h/8}$ of the minimal speed traveling wave, which is known to be impossible.  The last step is achieved by constructing a supersolution that is typical in studying Fisher-KPP.

We now show how to conclude \Cref{p.weak}.(ii) using \Cref{l.limsup}.

\begin{proof}[Proof of \Cref{p.weak}.(ii)]
We prove this by contradiction, assuming that, for some $\eps>0$,
\[
	\limsup_{\tau\to\infty} E_h(\tau)
		\leq \theta_{\rm crit} - \eps.
\]
Then, by \Cref{l.limsup}, $\limsup M_h \leq \theta_{\rm crit} - \eps$, and, hence, there exists $\tau_0$ such that $E_h(\tau), M_h(\tau) \leq \theta_{\rm crit} - \eps/2$ for all $\tau \geq \tau_0$.

First we show that, for any $c$ and $\tau_0$, there exists $D_c>0$, depending on $\tau_0$ and $c$, such that
\be\label{e.c1224}
	u_c(\tau_0,z) \leq D_c \exp\left\{ - \frac{z^2}{D_c}\right\}.
\ee
Notice that, by inverting the change of variables relating $u_c$ and $h$, it is enough to show this for $c=0$.   The bound of $u_{c=0}$ of the form in~\eqref{e.c1224} follows from changing variables from $g$ to $u_{c=0}$ and using~\eqref{e.c8101}.

Fix $A>0$ to be chosen.  Let $c = \cc$, $\lambda = 2 c/3$, and let
\[
	\overline u_c(\tau, z) = A e^{-\lambda z}.
\]
We aim to show that $u_c \leq \overline u_c$ on $[\tau_0,\infty)\times \R$.  

First, we consider the domain $[\tau_0,\infty)\times (-\infty,0]$.  By \Cref{l.rescaled_bounds}, $u_c$ is uniformly bounded above, and $\overline u_c \geq A$ for any $z \leq 0$.  Hence, if $A$ is sufficiently large, then we have that $\overline u_c \geq u_c$ on $[\tau_0,\infty)\times(-\infty,0]$.

Next we show that $\overline u_c \geq u_c$ on $[\tau_0,\infty)\times (0,\infty)$.  We do so via the comparison principle.  The first step is to show that $\overline u_c$ is a super-solution of~\eqref{e.ueq} on $(\tau_0,\infty)\times (0,\infty)$.  Indeed,
\[\begin{split}
	&\partial_\tau\overline u_c
		- \frac{3}{2} \Delta \overline u_c
		- 3 \overline u_c(E_h - \overline u_c)
		- 2 c \partial_z \overline u_c
		- \frac{2}{\tau+1} \overline u_c
		- \frac{z}{\tau+1} \partial_z \overline u_c\\
	&\qquad
		= \overline u_c\left(
			- \frac{3}{2} \lambda^2
			- 3E_h
			+ 3\overline u_c
			+ 2 c \lambda
			- \frac{2}{\tau+1}
			+ \frac{\lambda}{\tau+1}z		
		 \right)\\
	&\qquad
		\geq \overline u_c\left(
			- \frac{3}{2} \lambda^2
			- 3(\theta_{\rm crit} - \eps/2)
			+ 2 c \lambda
			- \frac{2}{\tau+1}
		 \right)
		= \overline u_c \left(
			\frac{2c^2}{3}
			- 3 (\theta_{\rm crit} - \eps/2)
			- \frac{2}{\tau+1}
		\right).
\end{split}\]
By our choice of $c$, it is clear that, up to increasing $\tau_0$ if necessary, the last line is non-negative and, thus, $\overline u_c$ is a supersolution of~\eqref{e.ueq}.

In order to apply the comparison principle, we address the parabolic boundary.  First, we have $u_c \leq \overline u_c$ on $[\tau_0,\infty)\times \{0\}$, as established above.  Second, up to increasing $A$, we have, via~\eqref{e.c1224},
\[
	u_c(\tau_0,z)
		\leq D_c \exp \left\{ - \frac{z^2}{D_c}\right\}
		\leq D_c \exp\left\{ D_c \lambda^2 - \lambda z\right\}
		\leq A e^{-\lambda z}
		\qquad\text{ for all } z\geq 0.
\]
Hence, $u_c \leq \overline u_c$ on $\{\tau_0\} \times (0,\infty)$.

We conclude that $u_c \leq \overline u_c$ on the parabolic boundary of $(\tau_0,\infty)\times (0,\infty)$ and that $\overline u_c$ is a supersolution of~\eqref{e.ueq}.  We can, thus, apply the comparison principle, which implies that $\overline u_c \geq u_c$ on $(\tau_0,\infty)\times \R$.  This concludes the proof that $u_c \leq \overline u_c$ on $[\tau_0,\infty)\times\R$.

From the definition of $\overline u_c$, it is clear that, for any $c' > c$,
\[\begin{split}
	\limsup_{\tau\to\infty} \int_{y > c'} h(\tau, y) \dy
		&= \limsup_{\tau \to \infty} \frac{1}{\tau+1}\int_{z > (c'-c)(\tau+1)} u_c(\tau,z) \dz\\
		&\leq \limsup_{\tau \to \infty} \frac{1}{\tau+1}\int_{z > (c'-c)(\tau+1)} \overline u_c(\tau,z) \dz
		= 0.
\end{split}
\]
A similar argument as above can be used to obtain a supersolution of $u_{-c}$ and bound the integral for $y< -c'$.  Thus, we find
\[
	\limsup_{\tau\to\infty} \int_{|y| > c'} h(\tau,y) dy = 0.
\]
In addition, we notice that
\[
	\limsup_{\tau \to \infty}
		\int_{|y|\leq c'}  h(\tau,y) \dy
		\leq \limsup_{\tau\to\infty} 2c' M_h(\tau)
		= 2c' \limsup_{\tau\to\infty} E_h(\tau)
		\leq 2c' (\theta_{\rm crit} - \eps).
\]
Here we used \Cref{l.limsup}.

On the other hand, $\int h(\tau,y) \dy = 1$.  Combining all above estimates, we have
\[
	1
		\leq \limsup_{\tau\to\infty} \int_{|y| > c'} h(\tau,y) \dy + \limsup_{\tau\to\infty} \int_{|y|\leq c'} h(\tau,y) \dy
		\leq 2c' (\theta_{\rm crit} - \eps).
\]
Since this is true of all $c'>c=\cc$, it follows for $c' = \cc$ in the limit.  Hence,
\[
	1
		\leq 2 \cc (\theta_{\rm crit} - \eps).
\]
Recall that $2 \cc \thc = 1$.  It follows that the right hand side is strictly less than one.  This is a contradiction, which concludes the proof.
\end{proof}

We now prove \Cref{l.limsup}.

\begin{proof}[Proof of \Cref{l.limsup}]
	Since $M_h(\tau) \geq E_h(\tau)$ for all $\tau$, it is enough to establish only the ``$\leq$'' in the claim above.   For notational reasons, let $\theta_M = \limsup M_h$ and $\theta_E= \limsup E_h$. 
	
	There are two cases.  Either $M_h$ is eventually monotonic in $\tau$ or not.  Consider first the latter case.  
	
	If $M_h$ is not eventually monotonic, then we may select $\tau_n$ that are local maxima of $M_h$ such that $M_h(\tau_n) \to \theta_M$.  It follows that $u_{c=0}$ has local maxima at $(\tau_n, z_n)$ for some $z_n$.  Using the equation~\eqref{e.ueq}, we find, at $(\tau_n, z_n)$,
	\[
		0 \leq \partial_\tau u_{c=0} - \frac{3}{2} \Delta u_{c=0}
			= 3 u_{c=0} \left( E_h(\tau_n) - u_{c=0} + \frac{2}{3(\tau_n+1)} u_{c=0}\right).
	\]
	Taking a limit as $n\to\infty$ and using the fact that $u_{c=0}(\tau_n,z_n) = M(\tau_n) \to \theta_M$ and $\limsup E_h(\tau_n) \leq \theta_E$, we find
	\be\label{e.c10151}
		0 \leq 3 \theta_M \left( \theta_E - \theta_M\right).
	\ee
	Since $\theta_M >0$ by \Cref{t.decay}, it follows that $\theta_M \leq \theta_E$, which concludes the proof in this case.

	We now consider the case where $M_h$ is eventually monotonic.  The case where $M_h$ is eventually nonincreasing and the case where it is eventually nondecreasing are handled similarly, so we show only the argument for when it is eventually nonincreasing.  In this case, we can select $\tau_n$ tending to infinity such that $\partial_\tau M_h(\tau_n) \to 0$ as $n\to\infty$, since, otherwise, $\partial_\tau M_h$ is uniformly negative, which implies that $M_h(\tau)\to-\infty$ as $\tau \to \infty$, a contradiction.  Since $M_h$ is eventually monotonic, $M_h(\tau_n) \to \theta_M$ as $n\to\infty$.  Note that $\Delta u_{c=0}(\tau_n,z_n) \leq 0$ when $z_n$ is the location of the spatial maximum of $u_{c=0}(\tau_n, \cdot)$. 
Using the equation~\eqref{e.ueq}, we find, at $(\tau_n, z_n)$,
	\[
		0\leq - \frac{3}{2} \Delta u_{c=0}
			= 3 u_{c=0} \left( E_h(\tau_n) - u_{c=0} + \frac{2}{3(\tau_n+1)} u_{c=0}\right) - \partial_\tau u_{c=0}.
	\]
	Taking a limit as $n\to\infty$ and using the fact that $u_{c=0}(\tau_n,z_n) = M(\tau_n) \to \theta_M$, $\limsup E_h(\tau_n) \leq \theta_E$, and $\lim \partial_\tau u_{c=0} = 0$, we find, again,~\eqref{e.c10151} in this case.  The proof is then concluded in the same way as in the previous case.
\end{proof}

\subsubsection{The upper bound on the liminf}

We begin by stating the proposition that is the crucial step.  It roughly states that if $E_h$ remains large enough (depending on $c$) for long enough, then $u_c$ grows up to $E_h$.  This is shown by approximating from below by solutions of the Fisher-KPP equation.

\begin{proposition}\label{p.growing_u_c}
	Fix any $\tau_0$, $\tau_1$, $R$, $\mu$, $\eps$, $\delta$, and $c$.  Suppose that
	\[
		u_c(\tau_0, \cdot) \geq \mu \1_{B_R }
			\quad\text{ and }\quad
		18 E_h(\tau) - 4 c^2 > \delta \ \ \text{ for all } \tau\in[\tau_0,\tau_1].
	\]
	If $\tau_0$ is sufficiently large, depending only on $\delta$ and $\eps$, then there exists $\underline \tau$, depending only on $\mu$, $R$, $\eps$, and $\delta$, such that, if $\tau_1 - \tau_0 \geq \underline \tau$, then
	\[
		u_c(\tau_1,0) \geq E_h(\tau_1) - \eps.
	\]
\end{proposition}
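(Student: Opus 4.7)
The strategy is to view $u_c$ as approximately satisfying a Fisher-KPP equation with time-dependent saturation $E_h(\tau)$ in the comoving frame $\zeta = z + 2c\tau$. In this frame the drift is eliminated and $u_c$ satisfies, up to the perturbations $\frac{2}{\tau+1}u_c + \frac{z}{\tau+1}\partial_z u_c$, the equation $\partial_\tau U = \frac{3}{2}\Delta U + 3U(E_h - U)$. The hypothesis $18 E_h - 4c^2 > \delta$ is precisely the statement that the KPP spreading speed $\sqrt{18 E_h}$ exceeds the drift $2|c|$ with a uniform gap $\eta_0 = \eta_0(\delta) > 0$, so that in the $z$-coordinate the saturated region about the initial support $B_R$ expands in both directions and eventually covers $z = 0$. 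The perturbation terms are controlled on any bounded window $|z| \leq L$ once $\tau_0 \gg L$, and this is what dictates the dependence of $\tau_0$ on $\eps$ and $\delta$.

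The argument proceeds by building an explicit subsolution in two stages. In the \emph{lift-up} stage, I conjugate away the drift via $u_c = e^{-2cz/3}\tilde u$, as in the computation of \Cref{s.heuristic}, producing a driftless equation with reaction coefficient $3E_h - 2c^2/3 \geq \delta/6$. On a Dirichlet ball $B_\rho$ the principal eigenfunction $\phi_\rho$ has eigenvalue $\frac{3}{2}(\pi/(2\rho))^2$, so a subsolution of the form $\alpha(\tau)e^{-2cz/3}\phi_\rho(z)$ with $\alpha$ growing exponentially at rate $\delta/6 - O(\rho^{-2}) - O(\tau_0^{-1})$ is available once $\rho$ is large (depending on $\delta$) and $\tau_0$ is large. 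Halting the growth at level $\bar E - \eps/3$, where $\bar E = (4c^2 + \delta)/18$ is a safe uniform lower bound for $E_h$, yields $T_1 = T_1(\mu, R, \delta)$ and $R_1 = R_1(\mu, R, \delta)$ such that $u_c(\tau_0 + T_1, \cdot) \geq (\bar E - \eps/3)\1_{B_{R_1}}$. In the \emph{spreading} stage, I compare $u_c$ from below with $V(\tau, z + 2c(\tau - \tau_0 - T_1))$, where $V$ solves the constant-coefficient Fisher-KPP equation $\partial_\tau V = \frac{3}{2}\Delta V + 3V(\bar E - V)$ with the lifted data; the classical Aronson-Weinberger spreading theorem guarantees $V \geq \bar E - \eps/3$ on $|\zeta| \leq (\sqrt{18 \bar E} - \eta_0/2)(\tau - \tau_0 - T_1)$. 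Because of the gap $\sqrt{18\bar E} > 2|c|$, this interval, translated back to $z$, contains $z = 0$ after $T_2 = T_2(R_1, \delta, \eps)$ additional units of time, producing $u_c(\tau_1, 0) \geq \bar E - \eps/2$.

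The final step refines $u_c(\tau_1, 0) \geq \bar E - \eps/2$ to the stated bound $u_c(\tau_1, 0) \geq E_h(\tau_1) - \eps$. Once $u_c$ is of order $\bar E$ throughout a neighborhood of $z = 0$, the embedded pointwise ODE $\dot u \approx 3u(E_h(\tau) - u)$ attracts $u_c(\tau, 0)$ toward the moving stable branch $u = E_h(\tau)$ at rate $\sim 3\bar E$; combining with the a priori bounds $E_h \in [\bar E, C]$ from \Cref{l.rescaled_bounds} and parabolic regularity controlling $\Delta u_c$ on the resulting near-flat plateau, the relaxation time needed to come within $\eps/2$ of $E_h(\tau_1)$ is bounded in terms of $\eps$ and $\delta$ only, and is absorbed into the choice of $\underline\tau$.

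The \emph{main obstacle} is the nonlocal, time-dependent coefficient $E_h(\tau)$, which cannot be handled as a prescribed function and whose pointwise values can oscillate within $[\bar E, C]$. The chosen workaround is to use only the uniform lower bound $\bar E$ throughout the subsolution construction, where classical KPP theory applies cleanly, and to defer the sharper comparison with $E_h(\tau_1)$ to the final pointwise relaxation step. A secondary difficulty is the drift-like perturbation $\frac{z}{\tau+1}\partial_z u_c$, whose coefficient is unbounded in $z$: this restricts the valid range of the spreading subsolution to a window of size proportional to $\tau_1 - \tau_0$, and is precisely the reason $\tau_0$ must be chosen large in terms of $\delta$ and $\eps$ at the outset.
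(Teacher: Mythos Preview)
Your first two stages roughly parallel the paper's approach (its \Cref{l.weak_lower_bound}), which also builds a KPP-type subsolution using the conjugation $e^{-2cz/3}$ times a compactly supported profile to obtain a uniform $O(1)$ lower bound on $u_c$ throughout $[\tau_0,\tau_1]$. The paper does not separate lift-up from spreading, but the outcome is the same: a bound $u_c(\tau,z)\geq 1/C_{\delta}$ on a large ball, uniform over admissible $c$.

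The genuine gap is in your final relaxation step. You claim that once $u_c(\cdot,0)\geq\bar E-\eps/2$, the embedded ODE $\dot u\approx 3u(E_h(\tau)-u)$ drives $u_c(\tau,0)$ to within $\eps/2$ of $E_h(\tau_1)$ in a time depending only on $\eps,\delta$. This fails as stated: the hypothesis only gives $E_h(\tau)>\bar E$ on $[\tau_0,\tau_1]$, so nothing prevents $E_h$ from sitting near $\bar E$ until time $\tau_1-o(1)$ and then rising to some $E_h(\tau_1)\gg\bar E$; in that scenario the ODE solution is trapped near $\bar E$ and cannot reach $E_h(\tau_1)-\eps$ in bounded time. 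Your appeal to ``parabolic regularity on the near-flat plateau'' does not rescue this, since the plateau is only bounded below by $\bar E-\eps/2$ and above by $M_h$, and these need not be close.

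The missing ingredient is the slow-growth estimate \Cref{l.slow_E_growth}, which gives $E_h(\tau')\leq\bigl((\tau'+1)/(\tau+1)\bigr)^2 E_h(\tau)$ and hence $E_h(\tau)\geq E_h(\tau_1)-\eps/2$ on a terminal layer $[\tau_1(1-\beta_\eps),\tau_1]$ whose length is proportional to $\tau_1$ and therefore can be made as large as needed. The paper freezes the reaction at the constant $\rho=\min_{[\tau_1(1-\beta_\eps),\tau_1]} E_h\geq E_h(\tau_1)-\eps/2$ on that layer and then applies a constant-coefficient KPP result (its \Cref{l.strong_lower_bound}, proved by compactness) to push $u_c(\tau_1,0)$ up to $\rho-\eps/2\geq E_h(\tau_1)-\eps$. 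If you insert \Cref{l.slow_E_growth} before your relaxation step and run your subsolution with the frozen value $\rho$ on the terminal layer rather than the moving $E_h(\tau)$, your argument can be repaired along the same lines.
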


A crucial point is that $\underline\tau$ does not depend on $c$; that is the lower bound is uniform over all admissible $c$.  We prove \Cref{p.growing_u_c} in \Cref{s.technical_lemmas}.  We show how to conclude \Cref{p.weak}.(i) assuming \Cref{p.growing_u_c}.

\begin{proof}[Proof of \Cref{p.weak}.(i)]
We prove this by contradiction.  Assume that $\liminf E_h \geq \theta_{\rm crit} + \eps$ for some $\eps>0$.  We may find $\tau_0>0$ such that $E_h(\tau) \geq \thc + \eps/2$ for all $\tau \geq \tau_0$.

Notice that, for $\tau \geq \tau_0$,
\[
	18 E_h(\tau)
		- 4 \cc^2
		\geq 18(\thc + \eps/2) - 4 \cc^2
		= 18 \thc + 9 \eps - 4(2\thc)^{-2}
		= 9 \eps.
\]
In the last line we used that $18 \thc^3 =1$.  Thus, a straightforward application of \Cref{p.growing_u_c} yields
\be\label{e.c1223}
	\liminf_{\tau\to\infty} \inf_{|y| \leq \cc} h(\tau,y)
		= \liminf_{\tau\to\infty} \inf_{|c| \leq \cc} u_c(\tau,0)
		\geq \theta_{\rm crit} + \eps/4.
\ee

Using~\eqref{e.c1223} and that $2 \cc \thc = 1$, we find
\[
	1
		= \liminf_{\tau\to\infty} \int h(\tau, y) \dy
		\geq \liminf_{\tau\to\infty} \int_{-\cc}^{\cc} h(\tau,y) \dy
		\geq 2\cc (\theta_{\rm crit} + \eps/4)
		= 1 + \frac{\cc \eps}{2}.
\]
This is clearly a contradiction, which concludes the proof.
\end{proof}

\section{The strong bounds with radial symmetry} \label{s.strong}

We now show the significantly stronger bounds under the assumption that $g_0$ is even and radially decreasing.  There are two major steps here.  First, we show that $E_h$ converges as $\tau\to\infty$.  Then we use that to obtain convergence of $h$.  The first step is restated in the following proposition:

\begin{proposition}\label{p.E_limit}
If $g$ solves~\eqref{e.main2} with initial data $0 \leq g_0 \in C_c(\R)$ satisfying $\int g_0 \dx = 1$ that is even and radially decreasing, then
\[
	\lim_{\tau\to\infty} E_h(\tau)  = \lim_{\tau\to\infty} M_h(\tau) = \thc.
\]
\end{proposition}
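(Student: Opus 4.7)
The plan is to prove $\lim_{\tau\to\infty} E_h(\tau) = \thc$, from which $\lim M_h = \thc$ follows using $M_h \geq E_h$ and $\limsup M_h = \limsup E_h$ from \Cref{l.limsup}. Two preliminaries drive the argument. First, since $R=\delta$ and $g_0$ is even and radially decreasing, standard symmetry and sliding-comparison arguments show $h(\tau, \cdot)$ is even and radially decreasing for every $\tau \geq 0$; in particular $M_h(\tau) = h(\tau, 0)$, and for every $c > 0$,
\[
	1 = \int h(\tau, y)\dy \geq 2c\, h(\tau, c), \qquad \text{i.e.,}\qquad u_c(\tau, 0) = h(\tau, c) \leq \tfrac{1}{2c}.
\]
Second, multiplying~\eqref{e.heq} by $h$, integrating by parts (noting $\int y h \partial_y h\dy = -E_h/2$), and using Cauchy--Schwarz in the form $E_h^2 \leq \int h^3$ to dominate the cubic term, yields the slow-increase estimate
\[
	\partial_\tau E_h(\tau) \leq \frac{2 E_h(\tau)}{\tau+1}.
\]
Combined with the uniform bound from \Cref{l.rescaled_bounds}, this implies that for any fixed $T > 0$ and $\varepsilon > 0$, $E_h(\tau) \leq E_h(\tau_0) + \varepsilon$ on $[\tau_0, \tau_0 + T]$ provided $\tau_0$ is large.

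For the upper bound $\limsup E_h \leq \thc$, suppose for contradiction $\limsup E_h \geq \thc + 4\eta$. Then for any $T$ there exist arbitrarily large $\tau_1$ with $E_h(\tau_1) \geq \thc + 3\eta$ and, by slow-increase, $E_h \geq \thc + 2\eta$ throughout $[\tau_1 - T, \tau_1]$. Using the algebraic identity $18\thc = 4\cc^2$, fix $c = \cc + \varepsilon'$ with $\varepsilon'$ so small that $18 E_h - 4c^2 \geq \eta$ on this interval while $\tfrac{1}{2c} \leq \thc - \varepsilon_0$ for some $\varepsilon_0 > 0$. The initial-data hypothesis of \Cref{p.growing_u_c} at $\tau = \tau_1 - T$ is verified by combining the Gaussian upper bound~\eqref{e.c8101} (which forces the mass of $h$ outside any high super-level set to be small) with the $L^\infty$ bound from \Cref{l.rescaled_bounds} and radial monotonicity, so that the super-level set $\{h(\tau_1 - T, \cdot) \geq \mu_0\}$ contains $[-c, c]$ for some $\mu_0 > 0$. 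Then \Cref{p.growing_u_c} yields $u_c(\tau_1, 0) \geq E_h(\tau_1) - \varepsilon \geq \thc + \eta$, contradicting the pointwise bound $u_c(\tau_1, 0) \leq \thc - \varepsilon_0$.

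For the lower bound $\liminf E_h \geq \thc$, suppose $\liminf E_h \leq \thc - 3\eta$, so there is a sequence $\tau_n \to \infty$ with $E_h(\tau_n) \leq \thc - 5\eta/2$. Slow-increase then gives $E_h(\tau) \leq \thc - 2\eta$ for $\tau \in [\tau_n, \tau_n + T]$ once $n$ is large. On each such interval, I would construct exponential super-solutions $\overline u_{\pm \cc}(\tau, z) = A e^{-\lambda z}$ as in the proof of \Cref{p.weak}.(ii), the key point being that $18 E_h - 4 \cc^2 \leq -36\eta < 0$ over the interval. The initial data at $\tau = \tau_n$ is dominated by $\overline u_{\pm\cc}(\tau_n, \cdot)$ after choosing $A$ sufficiently large, using~\eqref{e.c8101} to control the Gaussian tails of $h(\tau_n, \cdot)$ and \Cref{l.rescaled_bounds} to bound the remainder. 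Comparison forces $\int_{|y| > \cc} h(\tau_n + T, y) \dy \to 0$ as $n \to \infty$, so $\int_{-\cc}^\cc h(\tau_n + T, y) \dy \geq 1 - o(1)$, and Cauchy--Schwarz then gives
\[
	\bigl(1 - o(1)\bigr)^2 \leq 2\cc \int_{-\cc}^\cc h(\tau_n + T, y)^2 \dy \leq 2\cc\, E_h(\tau_n + T),
\]
i.e.\ $E_h(\tau_n + T) \geq \thc - o(1)$, contradicting $E_h(\tau_n + T) \leq \thc - 2\eta$ for $n$ large.

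The main obstacle I anticipate is the supersolution construction in the lower-bound step, where $\overline u_{\pm\cc}$ must dominate the non-Gaussian profile $u_{\pm\cc}(\tau_n, \cdot)$ at each time $\tau_n$ along a subsequence (rather than at a single initial time, as in the proof of \Cref{p.weak}.(ii)); calibrating $A$, $\lambda$, and $T$ uniformly along the subsequence requires carefully combining the Gaussian decay~\eqref{e.c8101} with the uniform $L^\infty$ bound of \Cref{l.rescaled_bounds}. A secondary technical point is verifying the initial-data hypothesis of \Cref{p.growing_u_c} in the upper-bound step: propagating a positive lower bound for $h(\tau_1 - T, \cdot)$ from near the origin out to a full neighborhood of $y = \cc + \varepsilon'$ requires combining radial monotonicity with a quantitative spread estimate on the super-level sets of $h$, again via~\eqref{e.c8101}.
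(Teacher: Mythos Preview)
Your overall strategy---contradiction in both directions, slow-increase of $E_h$, and \Cref{p.growing_u_c}---is sound, but each half has a genuine gap that your own closing remarks correctly identify as the hard points and that your proposed fixes do not close.

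\textbf{Upper bound.} You apply \Cref{p.growing_u_c} at $c=\cc+\varepsilon'>\cc$ and need a \emph{uniform} $\mu_0>0$ with $h(\tau_1-T,c)\ge\mu_0$ along the sequence $\tau_1\to\infty$. The tools you list cannot deliver this. The Gaussian bound~\eqref{e.c8101} is an \emph{upper} bound and only forces mass inside $[-L,L]$ for some $L$ you do not control; together with $M_h\le C$ and radial monotonicity it yields at best $\{h\ge\mu_0\}\supset[-\tfrac{1-o(1)}{2C},\tfrac{1-o(1)}{2C}]$, and nothing prevents $\tfrac{1}{2C}<\cc+\varepsilon'$. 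Concretely, profiles approaching $C\,\1_{[-1/(2C),1/(2C)]}$ are consistent with all your constraints yet have $h(\cc+\varepsilon')\to0$. The paper avoids this by working at $\underline c<\cc$ and invoking two ingredients you do not use: \Cref{l.M_follows_E} (which pins $M_h$ close to $E_h$ at a well-chosen time) and the H\"older-stability \Cref{l.holder_stability} (which then forces $h(\tau_1,\underline c)\gtrsim1$). The contradiction is then integral, $\int_{-\underline c}^{\underline c}h>1$, rather than pointwise.

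\textbf{Lower bound.} Restarting the barrier $\overline u_{\cc}=Ae^{-\lambda z}$ at each $\tau_n$ forces $A\ge\sup_{z\ge0}u_{\cc}(\tau_n,z)e^{\lambda z}$. But $u_{\cc}(\tau_n,z)=h(\tau_n,\cc+z/(\tau_n{+}1))$ need not decay at rate $\lambda=2\cc/3$ in $z$: if $h(\tau_n,\cdot)$ is merely $O(1)$ on $[\cc,\cc+\delta]$ (which nothing you have excluded), then $u_{\cc}(\tau_n,z)e^{\lambda z}\gtrsim e^{\lambda\delta(\tau_n+1)}$, so $A$ must grow exponentially in $\tau_n$ and $\int_{|y|>\cc}h(\tau_n+T)\,dy\le A/(\lambda(\tau_n{+}1))$ no longer tends to zero. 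The bound~\eqref{e.c8101} does not rescue this unless the constant $C$ there satisfies $C<2\cc^2/9=\thc$, which is not available. The paper's supersolution is instead global in $\tau$ from $\tau=0$, carries the running integral $3\int_0^\tau E_+$ in the exponent, and crucially uses $\limsup E_h=\thc$ (proved first) so that the time average $\overline E_+\to\thc$; the final contradiction also uses $M_h(\tau_n)$ rather than $E_h(\tau_n)$, again via \Cref{l.M_follows_E}.
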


We perform these steps in reverse order.  First, in \Cref{s.h_converges}, we show that $h$ converges as claimed assuming that $E_h$ and $M_h$ do; that is, \Cref{p.E_limit} holds.  Then, in \Cref{s.E_limit} we prove \Cref{p.E_limit}.

%
%

\subsection{Convergence of $h$ given convergence of $E_h$ and $M_h$}
\label{s.h_converges}

We now show how to conclude \Cref{t.strong} via \Cref{p.E_limit}.  While we use radial symmetry in this step, it is not required and is only used here to simplify the proof.  Indeed, a close inspection of the arguments reveals how to construct explicit sub- and supersolutions of $u_c$ (and, therefore, $h$) in order to obtain such pointwise bounds without using radial symmetry.  

A main idea in the proof is that, by H\"older's inequality, $M_h$ and $E_h$ can only be equal if $h$ is an indicator function.  Since $h$ is even and radially decreasing, this implies that $h = \theta \1_{[-c,c]}$ for some $\theta$ and $c$, which must be $\thc$ and $\cc$ by the rescalings described in \Cref{s.heuristic}.  Of course, for all finite $t$, $M_h$ and $E_h$ are not exactly equal, so we must understand the stability of H\"older's inequality in our setting.  This is encoded in the following general lemma, which essentially yields that if $M_h \approx E_h$, then $h \approx \theta \1_{[-c,c]}$.
\begin{lemma}\label{l.holder_stability}
	Fix $d\in \N$, and let $\omega_d$ be the volume of the unit ball in $\R^d$.  Suppose that $f: \R^d \to [0,\infty)$ is a rotationally symmetric and radially decreasing function with $\int f \dx = 1$.  Then, for every $r>0$:
	\begin{enumerate}[(i)]
		\item if $x_0\in B_r $ and $\int_{B_r} f \dx < 1$,
			\[
				f(x_0) \geq M_f \frac{\frac{E_f}{M_f} - \int_{B_r} f \dx}{1 - \int_{B_r} f \dx}.
			\]
		\item if $x_0\notin B_r $,
			\[
				f(x_0) \leq \frac{1 - \int_{B_r} f \dx}{\omega_d(|x_0|^d - r^d)}.
			\]
	\end{enumerate}

\end{lemma}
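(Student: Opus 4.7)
The plan is to prove both parts as straightforward consequences of the radial monotonicity of $f$, by splitting integrals into appropriate regions and using pointwise comparisons of $f$ with either $f(x_0)$ or $M_f$.

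For part (ii), since $|x_0|>r$, radial monotonicity yields $f(y)\geq f(x_0)$ for every $y$ in the annulus $A_{x_0}:=\{y : r\leq |y|\leq |x_0|\}$. Integrating over $A_{x_0}$ and extending the integral to the complement of $B_r$, we get
\[
f(x_0)\,\omega_d(|x_0|^d - r^d)=f(x_0)|A_{x_0}| \leq \int_{A_{x_0}} f \dy \leq \int_{B_r^c} f \dy = 1 - \int_{B_r} f \dx,
\]
and rearranging gives the claim.

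For part (i), the key observation is how to split $E_f=\int f^2\dx$ in order to make $f(x_0)$ appear. Write $I_r:=\int_{B_r}f\dx$. Since $f\leq M_f$ pointwise, we have $\int_{B_r}f^2\dx\leq M_f I_r$. On the other hand, because $x_0\in B_r$ and $f$ is radially decreasing, $f(y)\leq f(x_0)$ for every $y\notin B_r$, whence $\int_{B_r^c}f^2\dy\leq f(x_0)\int_{B_r^c}f\dy = f(x_0)(1-I_r)$. Adding the two estimates,
\[
E_f \leq M_f I_r + f(x_0)(1-I_r).
\]
Solving for $f(x_0)$ (using the assumption $I_r<1$) and factoring out $M_f$ gives exactly the inequality in part (i).

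There is no serious obstacle in the proof itself: both bounds are elementary once one recognizes the right splitting of the integral. The only mild subtlety is in part (i), where the asymmetry of the splitting — using the uniform bound $M_f$ on $B_r$ but the local bound $f(x_0)$ on $B_r^c$ — is what converts the gap between $E_f$ and $M_f$ (measured by $E_f/M_f$ vs.\ $I_r$) into a pointwise lower bound on $f(x_0)$. This is precisely the flavor of stability of H\"older's inequality alluded to in the discussion preceding the lemma, and it is what makes the estimate useful in the subsequent argument deducing convergence of $h$ from convergence of $M_h$ and $E_h$.
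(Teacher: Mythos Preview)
Your proof is correct and follows essentially the same approach as the paper: both parts split the relevant integral at the sphere $\partial B_r$ and use the radial monotonicity to compare $f$ pointwise with $M_f$, $f(x_0)$, or $f(r)$ on each piece. Your argument for part (i) is in fact marginally cleaner, since you bound $f$ on $B_r^c$ directly by $f(x_0)$ rather than by the boundary value $f(r)$ (the paper's version implicitly uses $f(x_0)\geq f(r)$ at the end).
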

%
%
%
Notice that if $\|f\|_2^2 = E_f = M_f = \|f\|_\infty$, we find $f(x) \geq M_f$ in (i) for all $|x| < (\omega_d M_f)^{-1/d}$.   Then, using this with the constraint $\int f \dx = 1$, we see that $f(x) = M_f$ for all $|x| < (\omega_d M_f)^{-1/d}$ and $f(x) \equiv 0$ for all $|x| > (\omega_d M_f)^{-1/d}$.  Hence, $f$ is an indicator function, which is the only function for which H\"older's inequality is an equality:
\[
	E_f
		= \int f^2 \dx
		\leq \|f\|_\infty \|f\|_1
		= M_f \int f \dx = M_f.
\]
This is why we describe \Cref{l.holder_stability} as a stability estimate for H\"older's inequality.  We note that when $E_f < M_f$, both (i) and (ii) are necessary to obtain precise bounds on $f$.  This is in contrast to the special case described above of $E_f = M_f$ where only (i) is used.

\Cref{l.holder_stability} is proved in \Cref{s.technical_lemmas}.  We now show how to deduce \Cref{t.strong} using the above results.

\begin{proof}[Proof of \Cref{t.strong}]
	First, we notice that \Cref{p.E_limit} yields the claim about the long time limits of $E_g$ and $M_g$ after suitable rescaling.  Next, we notice that the claim about the profile of $g$ in long times is equivalent to showing that
	\begin{equation}\label{e.c2131}
		h(\tau,y) \to \thc \1_{[-\cc, \cc]}
			\qquad \text{ as } \tau\to\infty,
	\end{equation}
	which we now show.

	Fix any $\eps>0$.  Applying \Cref{l.holder_stability}.(i) and using \Cref{p.E_limit}, we see that
	\[
		\liminf_{\tau\to\infty} \min_{|y| \leq \cc - \eps} h(\tau,y) \geq \thc,
	\]
	and, recalling that $h$ is even and radially decreasing,
	\[
		\limsup_{\tau\to\infty} \max_{|y| \leq \cc - \eps} h(\tau,y)
			\leq \limsup_{\tau\to\infty} h(\tau,0)
			= \limsup_{\tau\to\infty} M_h(\tau)
			= \thc.
	\]
	The last equality follows by \Cref{p.E_limit}.  Thus we have established~\eqref{e.c2131} in the case where $|y| < \cc$.  
	
	We now investigate the upper bound.  Applying \Cref{l.holder_stability}.(ii), we have
	\[
		h(\tau,\cc+\eps)
			\leq \frac{1 - \int_{-\cc}^{\cc} h(\tau,y') dy'}{2\eps}.
	\]
	From the lower bound established above, we have that $\lim \int_{-\cc}^{\cc} h(\tau,y) dy \to 1$ as $\tau \to \infty$.  Thus, we conclude that
	\[
		h(\tau,\cc + \eps) \to 0
			\qquad \text{ as } \tau \to\infty.
	\]
	Since $h$ is even and radially decreasing, we find
	\[
		\limsup_{\tau\to\infty} \sup_{|y|\geq \cc + \eps} h(\tau,y) = 0,
	\]
	which concludes the proof by the arbitrariness of $\eps$.
\end{proof}

\subsection{The convergence of $E_h$ and $M_h$}\label{s.E_limit}

In order to provide clearer references and to more closely mirror the structure of the proof, we split this into two separate propositions.
\begin{proposition}\label{p.E_limsup}
Under the assumptions of \Cref{p.E_limit},
\[
	\limsup_{\tau\to\infty} E_h(\tau)  = \limsup_{\tau\to\infty} M_h(\tau) = \thc.
\]
\end{proposition}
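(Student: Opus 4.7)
The equality $\limsup M_h = \limsup E_h$ is furnished by \Cref{l.limsup}, and \Cref{p.weak}(ii) provides the lower bound $\limsup E_h \ge \thc$, so the new content is the reverse inequality $\limsup E_h \le \thc$. The plan is to prove it by contradiction, using the KPP-style propagation estimate \Cref{p.growing_u_c} together with the monotonicity of $E_g$ in the original $t$-variable.

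Assuming $\limsup_{\tau\to\infty} E_h(\tau) \ge \thc + 2\eps$ for some $\eps>0$, pick $\tau_n\to\infty$ with $E_h(\tau_n)\ge \thc+3\eps/2$. The first step is to exhibit a long interval preceding $\tau_n$ on which $E_h\ge \thc+\eps$. For this I use \eqref{e.differential_inequality}, which when $R=\delta$ reads $\dot E_g = -\|\nabla g\|_2^2 - 2\int g(g-E_g)^2\,dx \le 0$, so $E_g$ is non-increasing in $t$. Since $E_h(\tau) = (\tau+1)^2 E_g((\tau+1)^3)$, for any $\sigma\le \tau_n$,
\[
E_h(\sigma) \;=\; (\sigma+1)^2 E_g((\sigma+1)^3) \;\ge\; \left(\frac{\sigma+1}{\tau_n+1}\right)^2 E_h(\tau_n).
\]
Picking $\alpha\in(0,1)$ with $\alpha^2(\thc+3\eps/2)\ge \thc+\eps$ and setting $\sigma_n := \alpha(\tau_n+1)-1$, I obtain $E_h\ge\thc+\eps$ on $[\sigma_n,\tau_n]$ with $\tau_n-\sigma_n = (1-\alpha)(\tau_n+1)\to\infty$.

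The next step chooses $c^\star \in \bigl(\cc,\, \sqrt{9(\thc+\eps)/2}\,\bigr)$, which is possible because $\cc = \sqrt{9\thc/2}$. On $[\sigma_n,\tau_n]$, one has $18 E_h - 4(c^\star)^2 \ge 18(\thc+\eps) - 4(c^\star)^2 =: \delta > 0$, so the hypothesis on $E_h$ in \Cref{p.growing_u_c} is satisfied with $c=c^\star$. Granted that we can also verify the initial bound $u_{c^\star}(\sigma_n,\cdot)\ge \mu\1_{B_R}$ for some $\mu,R>0$ independent of $n$, then for $n$ so large that $\tau_n-\sigma_n \ge \underline\tau(\mu,R,\eps/2,\delta)$ the proposition delivers $h(\tau_n,c^\star) = u_{c^\star}(\tau_n,0)\ge E_h(\tau_n)-\eps/2 \ge \thc+\eps$. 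Radial monotonicity of $h$—preserved by \eqref{e.main2} since $\partial_x g$ satisfies a linear parabolic equation with the symmetry condition $\partial_x g|_{x=0}\equiv 0$—then forces $h(\tau_n,y)\ge \thc+\eps$ for $|y|\le c^\star$, whence
\[
1 \;=\; \int h(\tau_n,y)\,dy \;\ge\; 2c^\star(\thc+\eps) \;>\; 2c^\star\thc \;>\; 2\cc\thc \;=\; 1,
\]
a contradiction.

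The principal obstacle is verifying the initial lower bound $u_{c^\star}(\sigma_n,\cdot)\ge \mu\1_{B_R}$: in $h$-coordinates this demands a positive pointwise lower bound on $h$ in a vanishing neighborhood of $y=c^\star>\cc$, a location that is \emph{not} controlled a priori by the easy bound $h(\sigma_n,0)=M_h(\sigma_n)\ge E_h(\sigma_n)\ge \thc+\eps$ at the origin together with radial decrease. I plan to resolve this by a bootstrap. Interior parabolic regularity for the $u_0$-equation from \eqref{e.ueq} (whose leading coefficient $3/2$ is constant and whose lower-order coefficients are locally uniformly bounded by virtue of \Cref{l.rescaled_bounds}) first promotes the bound at $y=0$ to $u_0(\sigma',\cdot)\ge \mu_0\1_{B_{R_0}}$ with fixed $\mu_0,R_0>0$. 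Then along a finite chain $0=c^{(0)}<c^{(1)}<\dots<c^{(N)}=c^\star$ and intermediate times $\sigma_n=\sigma_n^{(0)}<\sigma_n^{(1)}<\dots<\sigma_n^{(N)}<\tau_n$ inside $[\sigma_n,\tau_n]$, I iterate \Cref{p.growing_u_c} to pass from $c^{(k)}$ to $c^{(k+1)}$, each step followed by parabolic regularity for $u_{c^{(k+1)}}$ to upgrade the resulting pointwise bound $h(\sigma_n^{(k+1)},c^{(k+1)})\ge \thc+3\eps/4$ into one on a ball. Only finitely many steps (depending on $\eps$, not on $n$) and a fixed total time are required, which is comfortably accommodated by $\tau_n-\sigma_n\to\infty$.
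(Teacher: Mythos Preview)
Your reduction is correct, and your device for producing a long interval $[\sigma_n,\tau_n]$ on which $E_h\ge\thc+\eps$ (using the monotonicity of $E_g$ in $t$, which is the backward form of \Cref{l.slow_E_growth}) is perfectly fine.  The gap is in the bootstrap you propose for the seed $u_{c^\star}(\sigma_n,\cdot)\ge\mu\1_{B_R}$.

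\Cref{p.growing_u_c}, applied with parameter $c$, takes a lower bound $u_c(\tau_0,\cdot)\ge\mu\1_{B_R}$ and returns a lower bound on $u_c(\tau_1,0)=h(\tau_1,c)$ at the \emph{same} $c$; it does not advance $c$.  In $y$–coordinates a ball $B_R$ for $u_c$ at time $\sigma$ is an interval of width $R/(\sigma+1)$ centered at $c$, so radial monotonicity only gives $h(\sigma,y)\ge\mu$ for $|y|\le c$ and no information at $c^{(k+1)}>c^{(k)}$.  Switching to the $u_{c^{(k+1)}}$ frame, your lower bound lives on a ball centered at $z_*=-(c^{(k+1)}-c^{(k)})(\sigma+1)\to-\infty$, and Harnack for $u_{c^{(k+1)}}$ spreads mass only $O(1)$ in $z$ (i.e.\ $O(1/\sigma)$ in $y$) per unit time; moreover the transport term $\frac{z}{\tau+1}\partial_z u$ in \eqref{e.ueq} is \emph{not} small on such $z$–scales, so the smallness hypothesis on $a$ in \Cref{l.weak_lower_bound} fails there.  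The upshot is that ``finitely many steps independent of $n$'' cannot bridge a fixed $y$–gap $c^{(k+1)}-c^{(k)}$; any honest chain would need order $\sigma_n$ steps and a genuine FKPP spreading lemma that is neither stated nor proved here.

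The paper avoids this altogether.  It works at $\underline c<\cc$ rather than at $c^\star>\cc$, and produces the seed in one shot: using \Cref{l.M_follows_E} to find a time $\tau_1$ with $M_h(\tau_1)/E_h(\tau_1)$ close to $1$, it invokes the ``stability of H\"older'' estimate \Cref{l.holder_stability}(i) to conclude $h(\tau_1,\underline c)\gtrsim 1$, hence by radial monotonicity $h(\tau_1,\cdot)\gtrsim 1$ on the \emph{fixed} interval $[-\underline c,\underline c]$.  In $u_c$–coordinates this is a ball of radius $\sim\eps\tau_1\to\infty$ for every $|c|\le\underline c-\eps$, so \Cref{p.growing_u_c} applies uniformly in $c$ without any iteration.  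The final contradiction is then $\int h\ge 2(\underline c-\eps)(\theta-2\eps)>1$, using that $\limsup E_h=\theta>\thc$ rather than that $c^\star>\cc$.  If you want to salvage your route, the missing ingredient is an honest propagation statement for $u_c$ with an off-center seed and $O(1)$ (not small) transport coefficient; short of that, the H\"older-stability seed of the paper is the clean fix.
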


\begin{proposition}\label{p.E_liminf}
Under the assumptions of \Cref{p.E_limit},
\[
	\liminf_{\tau\to\infty} E_h(\tau)  = \liminf_{\tau\to\infty} M_h(\tau) = \thc.
\]
\end{proposition}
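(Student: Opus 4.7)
By Proposition~\ref{p.E_limsup}, $\limsup_{\tau\to\infty} E_h = \limsup_{\tau\to\infty} M_h = \thc$, and H\"older's inequality yields $E_h \leq M_h$, so $\liminf M_h \geq \liminf E_h$. It therefore suffices to prove $\liminf_{\tau\to\infty} E_h(\tau) \geq \thc$, after which both equalities in the statement follow since $\liminf E_h \leq \liminf M_h \leq \limsup M_h = \thc$.

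The key tool is a differential inequality for $E_h$. Multiplying \eqref{e.heq} by $2h$, integrating in $y$, and using the identities $\int h\,\Delta h\,\dy = -\int(\partial_y h)^2\,\dy$ and $\int h\,y\,\partial_y h\,\dy = -E_h/2$ (from integration by parts) yields
\[
\frac{dE_h}{d\tau} = -\frac{3}{(\tau+1)^2}\int (\partial_y h)^2\,\dy + 6\bigl(E_h^2 - \|h\|_3^3\bigr) + \frac{2E_h}{\tau+1}.
\]
Since $(\int h^2)^2 \leq \int h\cdot\int h^3 = \|h\|_3^3$ by Cauchy--Schwarz (using $\int h = 1$), the nonlinear term is non-positive, giving
\[
\frac{dE_h}{d\tau} \leq \frac{2E_h}{\tau+1},
\]
so $E_h(\tau)/(\tau+1)^2$ is non-increasing. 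Consequently, if $E_h(\tau_n) < \thc - 3\eta$ along some sequence $\tau_n\to\infty$, then for $\delta = \delta(\eta) > 0$ small enough that $(1+\delta)^2(\thc - 3\eta) \leq \thc - 2\eta$, one has $E_h(\tau) \leq \thc - 2\eta$ throughout $[\tau_n, \tau_n^*]$ where $\tau_n^* = (1+\delta)(\tau_n+1) - 1$.

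I then argue by contradiction, assuming $\liminf E_h < \thc - 3\eta$. With $\tau_n, \tau_n^*$ as above, I pick $c$ with $\sqrt{9(\thc - 2\eta)/2} < c < \cc$ and $\lambda = 2c/3$; the algebra $2c\lambda - \tfrac{3}{2}\lambda^2 - 3(\thc - 2\eta) = 2c^2/3 - 3(\thc - 2\eta) > 0$ lets one verify, just as in the proof of \Cref{p.weak}.(ii), that $\overline u_c(z) = Ae^{-\lambda z}$ solves the supersolution inequality for \eqref{e.ueq} on $(\tau_n, \tau_n^*]\times(0,\infty)$ for large $\tau_n$. The comparison principle applies because $u_c(E_h - u_c) \leq u_c((\thc-2\eta) - u_c)$ pointwise (since $E_h \leq \thc - 2\eta$ and $u_c \geq 0$), so the equation becomes effectively local. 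After translating back, $\int_{|y| > c} h(\tau_n^*, y)\,\dy \to 0$, and by the evenness of $g_0$ the same holds for $\int_{|y| < -c}$. Combined with $M_h(\tau_n^*) \leq \thc + \eta''$ from Proposition~\ref{p.E_limsup},
\[
1 = \int h(\tau_n^*,y)\,\dy \leq 2c(\thc + \eta'') + o(1),
\]
and since $c < \cc$ and $2\cc\thc = 1$, choosing $\eta''$ small enough (depending on $\cc - c$) makes the right-hand side strictly less than $1$, the sought contradiction.

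\textbf{Main obstacle.} The delicate part is securing the initial comparison $u_c(\tau_n, \cdot) \leq \overline u_c$ with $A$ independent of $n$. In the proof of \Cref{p.weak}.(ii) the initial time is a fixed $\tau_0$, so even a crude Gaussian bound at that one time suffices; here $\tau_n \to \infty$, and the Gaussian decay inherited from \eqref{e.c8101} carries a prefactor of order $(\tau_n+1)^{1/2}\exp\bigl((3\thc - y^2/4)(\tau_n+1)\bigr)$, which in fact \emph{grows} exponentially in $\tau_n$ for $y$ in the critical intermediate range $(\cc,\, 2\sqrt{3\thc})$. The resolution is to splice, as a viscosity supersolution with a concave kink, the uniform plateau $\thc + \eta''$ from Proposition~\ref{p.E_limsup} over a $\tau_n$-dependent range $z \in [0,Z_0]$ with the exponential tail $(\thc+\eta'')e^{-\lambda(z-Z_0)}$ beyond; then \eqref{e.c8101} is invoked only in the far region $y > 2\sqrt{3\thc}$, where it genuinely decays. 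Verifying that this patched profile is compatible with the supersolution inequality (where the kink is favorable since $\Delta \overline u_c$ has a positive delta) and produces an $n$-independent bound is the bulk of the technical work.
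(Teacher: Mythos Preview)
Your reduction to showing $\liminf E_h \geq \thc$ is correct, and the differential inequality you derive is exactly \Cref{l.slow_E_growth}. The gap is in the conclusion you draw from the patched supersolution: it does \emph{not} give $\int_{|y|>c} h(\tau_n^*,y)\,dy \to 0$. Your barrier is time-independent with plateau width $Z_0$ of order $(2\sqrt{3\thc}-c)(\tau_n+1)$ in the $z$-variable, so
\[
\int_{y>c} h(\tau_n^*,y)\,dy
= \frac{1}{\tau_n^*+1}\int_0^\infty u_c(\tau_n^*,z)\,dz
\le \frac{(\thc+\eta'')\bigl(Z_0+\lambda^{-1}\bigr)}{\tau_n^*+1}
\to \frac{(\thc+\eta'')(2\sqrt{3\thc}-c)}{1+\delta},
\]
which is a fixed positive number, not $o(1)$. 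Replacing the static plateau by one that shrinks at the fastest possible rate (the gap $2c - 3\sqrt{2(\thc-2\eta)}$ in the moving frame) does not help: as $\eta\to 0$ both this rate and $\delta$ tend to $0$, while the initial plateau width in $y$ is bounded below by $2\sqrt{3\thc}-\cc>0$. In short, the intervals $[\tau_n,\tau_n^*]$ on which you control $E_h$ are simply too short, relative to how far ahead of $c$ the profile may sit at $\tau_n$, for the front to recede past $c$.

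The paper sidesteps the initial-matching problem entirely by building a supersolution of $u_{\cc}$ on all of $(0,\infty)\times(0,\infty)$, starting at $\tau=0$ where the data are compactly supported. The key device is to replace $E_h$ in the reaction by $E_+(\tau):=\max\{E_h(\tau),\thc\}$; this makes the barrier a genuine supersolution globally, and the cost---the extra factor $\exp\{3\tau(\overline E_+(\tau)-\thc)\}$---is harmless because \Cref{p.E_limsup} forces $\overline E_+(\tau)\to\thc$. One then takes $\overline c>\cc$ and a sequence $\tau_n$ (from \Cref{l.M_follows_E}) along which $M_h(\tau_n)\to\theta<\thc$; the supersolution gives $\int_{y>\overline c}h(\tau_n,y)\,dy\to 0$, and $\int_0^{\overline c}h\le \overline c M_h(\tau_n)$ then yields $\tfrac12\le \overline c\,\theta$. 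Choosing $\overline c=1/(\thc+\theta)$ produces the contradiction $\tfrac12\le \theta/(\thc+\theta)<\tfrac12$. The point is that tracking the time-averaged reaction from $\tau=0$ replaces your delicate late-time matching with a single global barrier.
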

Clearly \Cref{p.E_limsup,p.E_liminf} imply \Cref{p.E_limit}.  Hence, we focus on proving each of the above propositions in turn in \Cref{s.E_h_above,s.E_h_below}, respectively.  Interestingly, the proof of \Cref{p.E_liminf} uses \Cref{p.E_limsup}, and, hence, the order in which these propositions are proved is important.

\subsubsection{Technical lemmas}

%

We begin by stating and proving two crucial lemmas.

\begin{lemma}\label{l.slow_E_growth}
	For any $\tau'>\tau>0$,
	\[
		E_h(\tau') \leq \left(\frac{\tau'+1}{\tau+1}\right)^2 E_h(\tau).
	\]
\end{lemma}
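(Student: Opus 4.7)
The plan is to translate the claim from the rescaled time $\tau$ back to the physical time $t$ via the defining relation $h(\tau,y)=(\tau+1)^2 g((\tau+1)^3,y(\tau+1)^2)$, and then invoke the monotonicity of $E_g$ that is already contained in the paper.

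First I would establish the scaling identity
\[
    E_h(\tau) \;=\; \int h(\tau,y)^2\dy \;=\; (\tau+1)^2 \int g\bigl((\tau+1)^3,x\bigr)^2 \dx \;=\; (\tau+1)^2\, E_g\bigl((\tau+1)^3\bigr),
\]
obtained by the substitution $x = y(\tau+1)^2$. Next I would observe that, in the case $R=\delta$ (so that $w=g$), the identity~\eqref{e.differential_inequality} reduces to
\[
    \dot E_g \;=\; -D_g \;-\; 2\int g(g-E_g)^2\dx \;\le\; 0,
\]
so $E_g$ is non-increasing in $t$. Combining these two observations, for $\tau'>\tau>0$ with $t=(\tau+1)^3<(\tau'+1)^3=t'$,
\[
    E_h(\tau') \;=\; (\tau'+1)^2\, E_g(t') \;\le\; (\tau'+1)^2\, E_g(t) \;=\; \left(\frac{\tau'+1}{\tau+1}\right)^{\!2} E_h(\tau),
\]
which is exactly the claim.

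There is really no obstacle here: the estimate is a consequence of the parabolic scaling built into the definition of $h$ plus the $L^2$-dissipation of $g$. For robustness, I would also record the alternative self-contained derivation that works directly in the $\tau$ variable: multiplying~\eqref{e.heq} by $h$, integrating, and using $\int y h\,\partial_y h\dy = -\tfrac{1}{2}\|h\|_2^2$, one obtains
\[
    \frac{d}{d\tau} E_h \;=\; -\frac{3}{(\tau+1)^2}\|\nabla h\|_2^2 \;+\; 6\bigl(E_h^2 - \|h\|_3^3\bigr) \;+\; \frac{2}{\tau+1}\, E_h.
\]
The gradient term is manifestly nonpositive, and $E_h^2 \le \|h\|_3^3$ by Cauchy--Schwarz (since $\|h\|_1=1$ gives $E_h^2 = (\int h\cdot h)^2 \le \|h\|_1\|h\|_3^3$). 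Hence $\tfrac{d}{d\tau}\log E_h \le \tfrac{2}{\tau+1}$, and integrating from $\tau$ to $\tau'$ yields the stated inequality. Either presentation makes it clear that the factor $((\tau'+1)/(\tau+1))^2$ is precisely the price paid by the two linear drift terms $\tfrac{2}{\tau+1}(h+y\partial_y h)$ in~\eqref{e.heq}.
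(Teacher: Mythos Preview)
Your proposal is correct. The second derivation you give---multiply \eqref{e.heq} by $h$, integrate by parts, and use $E_h^2\le\|h\|_3^3$ (from $\|h\|_1=1$) to obtain $\dot E_h\le \tfrac{2}{\tau+1}E_h$---is exactly the paper's proof. Your first derivation, via the scaling identity $E_h(\tau)=(\tau+1)^2 E_g((\tau+1)^3)$ together with the monotonicity of $E_g$ from \eqref{e.differential_inequality}, is a clean alternative that sidesteps the energy computation in the $\tau$ variable entirely; it is arguably more transparent since it makes explicit that the factor $((\tau'+1)/(\tau+1))^2$ comes purely from the change of variables while all the analytic content is the already-established dissipation $\dot E_g\le 0$.
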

\begin{proof}
	Multiplying~\eqref{e.heq} by $h$ and integrating by parts implies that $\dot E_h	\leq \frac{2}{\tau + 1} E_h.$
	Solving this differential inequality yields the claim: $E_h(\tau') \leq \left( \frac{\tau'+1}{\tau+1}\right)^2 E_h(\tau).$
\end{proof}

\begin{lemma}\label{l.slow_M_growth}
	For any $\tau'>\tau>0$,
	\[
		M_h(\tau') \leq \left(\frac{\tau'+1}{\tau+1}\right)^2 M_h(\tau).
	\]
\end{lemma}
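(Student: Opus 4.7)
The plan is to mirror the proof of \Cref{l.slow_E_growth}, but with a pointwise maximum principle argument in place of the energy estimate. Fix $\tau>0$ and let $y_\tau$ be a point where the spatial maximum of $h(\tau,\cdot)$ is attained; such a point exists because the Gaussian upper bound \eqref{e.c8101} on $g$ translates, via the rescaling \eqref{e.h}, into decay of $h$ at infinity, so that the supremum is actually a maximum. Evaluating \eqref{e.heq} at $(\tau,y_\tau)$, we have $\Delta h(\tau,y_\tau)\leq 0$ and $\partial_y h(\tau,y_\tau)=0$, so the spatial derivative terms contribute nonpositively. Moreover, the reaction term $3h(E_h-h)$ at that point equals $3M_h(\tau)\bigl(E_h(\tau)-M_h(\tau)\bigr)$, which is nonpositive because H\"older's inequality together with $\int h\,dy=1$ gives
\[
E_h(\tau)=\int h^2\,dy \leq M_h(\tau)\int h\,dy = M_h(\tau).
\]
The only surviving term is $\tfrac{2}{\tau+1}h$, so in the sense of upper derivatives,
\[
\dot M_h(\tau)\leq \frac{2}{\tau+1}M_h(\tau).
\]

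To make this rigorous without appealing to differentiability of $M_h$, the standard device is to compare $h$ with the supersolution $\overline h(\tau,y):=(\tau+1)^2(\sigma+1)^{-2}M_h(\sigma)$ on $[\sigma,\tau']\times\R$ for any fixed $\sigma\geq\tau$. Indeed, since $\overline h$ is spatially constant, plugging it into the right side of \eqref{e.heq} reduces the equation to $\partial_\tau \overline h = \tfrac{2}{\tau+1}\overline h + 3\overline h(E_h-\overline h)$. As long as $\overline h\geq h$ somewhere, any first contact point between $h$ and $\overline h$ would have to be an interior spatial maximum of $h$, where, by the computation above, $\partial_\tau h\leq \tfrac{2}{\tau+1}h + 3h(E_h-h)\leq \partial_\tau \overline h$ (using $E_h\leq M_h\leq\overline h$ at a touching point). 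A standard comparison argument then rules out any first contact, giving $h\leq\overline h$ on $[\sigma,\tau']\times\R$, and therefore $M_h(\tau')\leq \bigl(\tfrac{\tau'+1}{\sigma+1}\bigr)^2 M_h(\sigma)$. Taking $\sigma=\tau$ yields the claim.

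The only potentially subtle step is justifying the comparison, but no obstacle arises here: the Gaussian tail bound on $g$ from \eqref{e.c8101} ensures that $h(\tau,\cdot)\to 0$ at infinity uniformly on compact time intervals, so a first contact point between $h$ and the spatially constant $\overline h$ must occur in the interior of the spatial domain (and hence at an interior maximum of $h$) or at $\tau=\sigma$, where by construction $\overline h(\sigma,\cdot)=M_h(\sigma)\geq h(\sigma,\cdot)$.
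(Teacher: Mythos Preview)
Your proof is correct and follows the same strategy the paper indicates: replace the energy estimate of \Cref{l.slow_E_growth} by a comparison principle, using a spatially constant supersolution $\overline h(\tau)=\bigl(\tfrac{\tau+1}{\sigma+1}\bigr)^2 M_h(\sigma)$. The key observation you make --- that at a touching point one has $E_h\leq M_h=\overline h$, so the reaction term does not spoil the supersolution inequality --- is exactly the content of the omitted argument. One minor remark: the touching argument as written gives only a non-strict inequality $\partial_\tau h\leq\partial_\tau\overline h$ at first contact, and the initial data already touch at the maximum; you correctly flag this as ``standard,'' and indeed the usual $\epsilon$-perturbation (or, more directly, invoking \Cref{l.slow_E_growth} to get $E_h(\tau)\leq\overline h(\tau)$ for \emph{all} $\tau\geq\sigma$ and then applying the ordinary comparison principle with $E_h$ treated as a known coefficient) closes the gap cleanly.
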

The proof of this fact is exactly as in \Cref{l.slow_E_growth}, though using the comparison principle in place of energy estimates, so we omit its proof.

We require another technical lemma; however, its proof is quite involved and so we postpone it until \Cref{s.technical_lemmas}.
\begin{lemma}\label{l.M_follows_E}
	Fix any $\eps>0$.  There exists $\tau_\eps>0$ and $\tau_{\rm shift}$, depending only on $\eps$, such that if $\tau \geq \tau_\eps$, then
	\[
		M_h(\tau + \tau_{\rm shift})
			\leq (1+ \eps)E_h(\tau).
	\]
	Moreover, there exists a sequence $\tau_n$ tending to infinity such that
	\[
		\lim_{n\to\infty} M_h(\tau_n)
			= \lim_{n\to\infty} E_h(\tau_n)
				= \liminf_{\tau\to\infty} E_h
				= \liminf_{\tau\to\infty} M_h.
	\]
\end{lemma}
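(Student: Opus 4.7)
\emph{Proof proposal.} My plan is to prove the first assertion by contradiction and then deduce the second by a diagonal extraction. Throughout, I would use the universal bounds $c_0 \leq E_h(\tau) \leq M_h(\tau) \leq C$ for $\tau \geq 1$ from \Cref{l.rescaled_bounds}, together with the elementary inequality $E_h \leq M_h$ (which follows from $\int h \dy = 1$). Suppose, towards a contradiction, that $M_h(\tau^*) > (1+\eps) E_h(\tau)$ for some $\tau \geq \tau_\eps$, where $\tau^* := \tau + \tau_{\rm shift}$ and $\tau_\eps, \tau_{\rm shift}$ are to be chosen at the end.

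The first step is to upgrade this single-time inequality to a uniform lower bound on $M_h/E_h$ throughout $[\tau,\tau^*]$. I would apply \Cref{l.slow_E_growth} on $[\tau,\tau']$ and \Cref{l.slow_M_growth} on $[\tau',\tau^*]$ to obtain
\[
	E_h(\tau') \leq \left(\frac{\tau'+1}{\tau+1}\right)^2 E_h(\tau), \qquad M_h(\tau') \geq \left(\frac{\tau'+1}{\tau^*+1}\right)^2 M_h(\tau^*).
\]
Since $\tau_{\rm shift}$ is fixed, both correction factors lie within $1+\eta$ of $1$, where $\eta = \eta(\tau_\eps,\tau_{\rm shift}) \to 0$ as $\tau_\eps \to \infty$. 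Combining these two inequalities with the contradiction hypothesis then yields, for $\tau_\eps$ large enough,
\[
	M_h(\tau') \geq \left(1 + \frac{\eps}{2}\right) E_h(\tau') \qquad \text{for all } \tau' \in [\tau,\tau^*].
\]

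Next I would run the pointwise differential inequality for $M_h$. Under the even and radially decreasing hypothesis (which is preserved by~\eqref{e.heq}), the maximum of $h(\tau',\cdot)$ is attained at $y=0$, so $M_h(\tau') = h(\tau',0)$ and the advective term $y \partial_y h$ vanishes there. Parabolic regularity then lets me differentiate in time and use $\Delta h(\tau',0) \leq 0$ to obtain
\[
	\dot M_h(\tau') \leq 3 M_h(\tau')\bigl(E_h(\tau') - M_h(\tau')\bigr) + \frac{2}{\tau'+1} M_h(\tau').
\]
Substituting the lower bound on $M_h/E_h$ just established and using $E_h \geq c_0$, this becomes, for $\tau_\eps$ sufficiently large,
\[
	\dot M_h(\tau') \leq -\frac{\eps c_0}{2} M_h(\tau'), \qquad \tau' \in [\tau,\tau^*].
\]
Integrating over $[\tau,\tau^*]$ gives $M_h(\tau^*) \leq C e^{-\eps c_0 \tau_{\rm shift}/2}$. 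Choosing $\tau_{\rm shift}$ large enough (depending only on $\eps$, $c_0$, and $C$) makes the right-hand side strictly less than $(1+\eps) c_0 \leq (1+\eps) E_h(\tau)$, the desired contradiction.

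For the second assertion, set $L := \liminf_\tau E_h$ and pick $\sigma_n \to \infty$ with $E_h(\sigma_n) \to L$. Fix $\eps_k \searrow 0$ with associated shifts $\tau_{{\rm shift},k}$ and thresholds $\tau_{\eps_k}$ from the first part, and select $k_n \to \infty$ slowly enough that $\sigma_n \geq \tau_{\eps_{k_n}}$. Setting $\tau_n := \sigma_n + \tau_{{\rm shift},k_n}$, the first part gives $M_h(\tau_n) \leq (1+\eps_{k_n}) E_h(\sigma_n) \to L$; combined with $E_h(\tau_n) \leq M_h(\tau_n)$ and $E_h(\tau_n) \geq L - o(1)$, this yields $E_h(\tau_n), M_h(\tau_n) \to L$, and $M_h \geq E_h$ pointwise then forces $\liminf M_h = L$ as well. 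The main obstacle I anticipate is the rigorous justification of the differential inequality for $M_h$: the radially-decreasing hypothesis is essential, since it pins the maximum at the origin, where the advective term vanishes and $h$ is smooth enough to differentiate in time; without this symmetry, one would need a viscosity-type argument to handle the moving location of the spatial maximum.
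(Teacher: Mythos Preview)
Your argument is correct and takes a genuinely different route from the paper's. The paper proves the first assertion directly (not by contradiction) by constructing the spatially constant supersolution
\[
	\overline h(\tau',y)
		= \left(1+\tfrac{\eps}{2}\right)\left[ M_h(\tau)\, e^{-\gamma (\tau'-\tau)}
			+ \left(1 -  e^{-\gamma (\tau'-\tau)}\right) E_h(\tau)\right],
\]
which decays from $(1+\eps/2)M_h(\tau)$ toward $(1+\eps/2)E_h(\tau)$, verifying it is a supersolution of~\eqref{e.heq} on $[\tau,\tau+\tau_{\rm shift}]$ using only that $E_h(\tau')\leq (1+\eps/4)E_h(\tau)$ there (from \Cref{l.slow_E_growth}), and then reading off $M_h(\tau+\tau_{\rm shift})\leq \overline h(\tau+\tau_{\rm shift},0)$. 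By contrast, you run the scalar ODE for $M_h(\tau')=h(\tau',0)$ directly, using the contradiction hypothesis together with both slow-growth lemmas to force a uniform gap $M_h\geq (1+\eps/2)E_h$ on the whole interval, which then makes the reaction term strictly dissipative and drives $M_h$ down exponentially. The paper's construction is slightly more robust in that it uses the comparison principle globally in $y$ and never needs to locate the spatial maximum, so it would go through verbatim without the even/radially-decreasing assumption; your approach is more elementary (no barrier to check) but, as you note, leans on the symmetry to identify $M_h(\tau')$ with $h(\tau',0)$ and thereby avoid the technicalities of differentiating a moving maximum. Your derivation of the second assertion by diagonalizing over $\eps_k\searrow 0$ is essentially the same as the paper's.
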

Roughly, we require this, in conjunction with the two previous lemmas, to find large time intervals on which $M_h$ and $E_h$ take approximately the same value.

\subsubsection{The upper bound on $\limsup E_h$ and $\limsup M_h$: \Cref{p.E_limsup}}\label{s.E_h_above}

\

\begin{proof}[Proof of \Cref{p.E_limsup}]
By \Cref{l.limsup}, $\limsup M_h = \limsup E_h$; hence, for the remainder of this proof, we focus our attention only on $\limsup E_h$. We prove the bound on $E_h$ by contradiction. Suppose that
\[
	\limsup_{\tau\to\infty} E_h(\tau) = \theta > \thc.
\]
The main idea is the following.  First, find a large time interval where $E_h$ transitions between $\thc$ to $\theta$.  When $E_h \approx \thc$, the closeness of $M_h$ and $E_h$ forces $h$ to be $O(1)$ near $\cc$ (recall \Cref{l.holder_stability}).  Then, as $E_h$ grows to be approximately $\theta$, so does $h$, at least on the set $[-\cc,\cc]$.  It follows that the integral of $h$ is at least approximately $2\theta \cc > 2 \thc \cc = 1$, which is a contradiction.

Before beginning we set two parameters.  Fix
\be\label{e.c10161}
	\eps \in \left(0, \frac{\theta-\thc}{100(1+\thc)}\right)
		\quad\text{ and }\quad
	\underline c
		= \frac{\cc}{(1+2\eps)^2}.
\ee
such that
\be\label{e.c10162}
	2(\underline c - \eps)(\theta - 2\eps)
		> 1.
\ee
The above is possible since $\theta > \thc$ and $2 \thc \cc = 1$.

{\bf Step 1: a large time interval in which $E_h$ transitions from $\thc$ to $\theta$.}  

Let $\overline \tau>0$ be a large time to be determined.  By \Cref{p.weak}, we have that $\liminf E_h \leq \thc$.  Hence, we can find $\overline \tau < \tilde\tau_1 < \tau_2$ such that
\be\label{e.c8114}
	E_h(\tilde \tau_1) = \thc(1 + \eps),
		\quad
	E_h(\tau_2) = \theta - \eps,
		\quad\text{ and }\quad
	E_h(\tau) \in \left[ \thc(1 + \eps), \theta - \eps\right]
\ee
for all $\tau \in [\tilde \tau_1, \tau_2]$.  By \Cref{l.slow_E_growth}, 
\[
	\tau_2 \geq \tilde \tau_1 \sqrt{\tfrac{\theta - \eps}{\thc(1 + \eps)}}.
\]
We note that, by the choice of $\eps$~\eqref{e.c10161}, the coefficient of $\tilde \tau_1$ is greater than $1$ so that $\tau_2 \geq (1+2\rho) \tilde \tau_1$ for some $\rho >0$.  We note that $\rho$ can be chosen independent of $\eps$ over all $\eps$ satisfying~\eqref{e.c10161}.

Using \Cref{l.M_follows_E} and increasing $\overline \tau$ if necessary, we find a universal constant $\tau_{\rm shift}$ 
such that
\be\label{e.c8113}
	M_h(\tilde \tau_1 + \tau_{\rm shift})
		\leq \frac{1 + 2\eps}{1+\eps} E_h(\tilde \tau_1)
		= (1+2\eps)\thc.
\ee
Let $\tau_1 = \tilde\tau_1 + \tau_{\rm shift}$.  Up to increasing $\overline \tau$ in a way depending only on $\rho$, we have that $(1+\rho)\tau_{\rm shift} \leq \rho \tilde \tau_1$ and, hence
\be\label{e.c10152}
	(1+\rho)\tau_1
		= (1+\rho)(\tilde\tau_1 + \tau_{\rm shift})
		\leq (1+\rho)\tilde\tau_1 + \rho \tilde \tau_1
		= (1+2\rho)\tau_1
		\leq \tau_2.
\ee
%

Combining~\eqref{e.c8113} with the fact that $E_h(\tau_1) \geq \thc(1+\eps)$ by~\eqref{e.c8114}, we deduce the two following inequalities:
\be\label{e.c8112}
	M_h(\tau_1)
		\leq \thc(1+2\eps)
	\qquad\text{ and }\qquad
	\frac{E_h(\tau_1)}{M_h(\tau_1)}
		\geq \frac{1+\eps}{1+2\eps}.
\ee

%
%
%
%
%

{\bf Step 2: $h = O(1)$ near $\cc$.}    In this step, we obtain a preliminary bound on $h$ so that we may apply \Cref{p.growing_u_c}.

Applying \Cref{l.holder_stability}.(i), we find
\be\label{e.c8111}
	h(\tau_1, \underline c)
		\geq M_h(\tau_1) \frac{\frac{E_h(\tau_1)}{M_h(\tau_1)} - \int_{-\underline c}^{\underline c} h \dy}{1 - \int_{-\underline c}^{\underline c} h \dy}.
\ee
In order to obtain a lower bound on the right hand side above, we define the following auxiliary function.  For any $s< E_h/M_h \leq 1$, let
\[
	\phi(s) = \frac{\frac{E_h(\tau_1)}{M_h(\tau_1)} - s}{1-s}.
\]
Notice that $\phi$ is decreasing on its domain.  In addition, recalling the definition of $M_h$, the choice of $\underline c$~\eqref{e.c10161}, the fact that $2\cc \thc = 1$, and the bounds on $M_h$ and $E_h/M_h$~\eqref{e.c8112}, yields
\[\begin{split}
	\int_{-\underline c}^{\underline c} h \dy
		&\leq 2 \underline c M_h(\tau_1)
		= 2 \frac{\cc}{(1+2\eps)^2} M_h(\tau_1)\\
		&\leq 2 \frac{\cc}{1+2\eps} \thc
		= \frac{1}{1+ 2\eps}
		< \frac{1 + \eps}{1+2\eps}
		\leq \frac{E_h(\tau_1)}{M_h(\tau_1)}.
\end{split}\]
Thus, $\int_{-\underline c}^{\underline c} h \dy \leq 1/(1+2\eps)$ and both quantities are in the domain of $\phi$.  Hence, 
\[
	\frac{\frac{E_h(\tau_1)}{M_h(\tau_1)} - \int_{-\underline c}^{\underline c} h \dy}{1 - \int_{-\underline c}^{\underline c} h \dy}
		= \phi\left( \int_{-\underline c}^{\underline c} h \dy\right)
		\geq \phi\left(\frac{1}{1+2\eps}\right)
		= \frac{ \frac{E_h(\tau_1)}{M_h(\tau_1)} - \frac{1}{1+2\eps}}{1 - \frac{1}{1+2\eps}}.
\]
Using this in~\eqref{e.c8111} and applying~\eqref{e.c8112} yields
\[
	h(\tau_1, \underline c)
		\geq M_h(\tau_1) \frac{ \frac{E_h(\tau_1)}{M_h(\tau_1)} - \frac{1}{1+2\eps}}{1 - \frac{1}{1+2\eps}}
		\geq M_h(\tau_1) \frac{ \frac{1+\eps}{1+2\eps} - \frac{1}{1+2\eps}}{1 - \frac{1}{1+2\eps}}
		= \frac{M_h(\tau_1)}{2}.
\]
By \Cref{l.rescaled_bounds} and the fact that $h$ is radially decreasing, we conclude that
\be\label{e.c10163}
	\min_{|x| \leq \underline c} h(\tau_1, x)
		= h(\tau_1, \underline c)
		\gtrsim 1.
\ee

{\bf Step 3: $h$ grows up to $\theta$.}  Up to increasing $\underline\tau$, which also increases $\tau_2 - \tau_1$ (see~\eqref{e.c10152}), we now apply \Cref{p.growing_u_c} to conclude that
\[
	u_c(\tau_2, 0) \geq \theta - 2\eps
\]
for all $c \in [0,\underline c-\eps]$.  Here it was crucial that the lower bound~\eqref{e.c10163} on $h$ was $\mathcal{O}(1)$ and uniform over all $x\in [-\underline c, \underline c]$. 
Thus, we find
\[
	1
		= \int_{-\infty}^\infty h(\tau_2, c) dc
		\geq \int_{-\underline c + \eps}^{\underline c - \eps} u_c(\tau_2, 0) dc
		\geq 2(\underline c - \eps)(\theta - 2\eps).
\]
However, $2(\underline c - \eps)(\theta - 2\eps) > 1$ by~\eqref{e.c10162}.  This is a contradiction, concluding the proof.
\end{proof}

\subsubsection{Lower bound on liminf}\label{s.E_h_below}

We now prove \Cref{p.E_liminf}, which completes the proof of \Cref{p.E_limit}.  As in the previous section, we note that we do not directly use the symmetries of $g_0$ in a strong way here.  It only arises in this section through \Cref{p.E_limsup}, which was established using the symmetry.

A useful quantity in the sequel is the time average of $E_h$.  In general, given a function $f: [0,\infty) \to \R$, we define its time average to be, for any $\tau>0$,
\[
	\overline f(\tau) = \frac{1}{\tau} \int_0^\tau f(s) ds.
\]

The main heuristic of the proof is the following.  If $E_h$ oscillates below $\thc$ by a fixed amount at a large time, then so does $M_h$, by \Cref{l.M_follows_E}.  Hence, on $[-\cc, \cc]$, $h$ has to be smaller than $\thc$, which implies that
\[
	u_{\cc} < \thc \quad \text{ for } z<0.
\]
On the other hand, using \Cref{p.E_limsup}, the ``accumulated reaction'' $3 \int_0^\tau E_h(s) ds = 3 \tau \overline E_h(\tau)$ can be no larger than $3\tau \thc$ plus a small correction due to the difference in time domains between the integral and the interval on which $E_h$ is small.  Heuristically, this means the front $u_{\cc}$ in~\eqref{e.ueq} cannot exceed the speed $0$ in the moving frame (speed $\cc$ in the physical variables).  This yields
\[
	u_{\cc} = o(1) 
		\quad \text{ for } z> 0.
\]
Thus, after suitably quantifying everything, we obtain the contradiction:
\[\begin{split}
	\frac{1}{2}
		&= \int_0^\infty h(\tau,y) \dy
		= \frac{1}{\tau}\int_{-\cc (\tau+1)}^\infty u_{\cc}(\tau,z) \dz\\
		&< \underbrace{\frac{1}{\tau} \int_{-\cc (\tau+1)}^0 \thc \dz}_{h < \thc \text{ on } [-\cc,\cc]}
			+ \underbrace{\frac{1}{\tau} \int_0^\infty o(1) \dz}_{\substack{\text{$u_{\cc}$ is small}\\\text{beyond the front}}}
		\approx \thc \cc = \frac{1}{2}.
\end{split}\]


%
%


\begin{proof}[Proof of \Cref{p.E_liminf}]
	Recall that $E_h \leq M_h$, and hence that $\liminf E_h \leq \liminf M_h$.  By \Cref{p.E_limsup}, we have that $\limsup M_h = \thc$.  Hence, if we prove that $\liminf E_h = \thc$, we may conclude that $\liminf M_h = \thc$.  As such, we consider only $\liminf E_h$ for the remainder of the proof.

%

	We proceed by contradiction.  Assume that $\liminf E_h < \thc$.    By \Cref{l.M_follows_E}, there exists $\tau_n$ tending to infinity such that
	\[
		\lim_{n\to\infty} M_h(\tau_n), \lim_{n\to\infty} E_h(\tau_n)
			< \thc.
	\]

	Define $E_+(\tau) = \max\{E_h(\tau),\thc\}$. 	We note that $\overline E_+(\tau) \geq \thc$ for all $\tau$ and that $\lim \overline E_+ = \thc$.  The latter follows from the fact that, by \Cref{p.E_limsup}, $\limsup E_h = \thc$. Recall that $\overline E_+$ was defined as the time average of $E_+$.

	We now construct a super-solution of $u_{\cc}$ on the parabolic domain $\cP = (0,\infty)\times(0,\infty)$.  Indeed, for $A>0$ to be determined and any $(\tau,z) \in \overline\cP$, let
	\[
		\overline u(\tau,z)
			= A(\tau + 1) \exp\left\{3 \int_0^\tau E_+(s) ds - \frac{2 \cc^2}{3}\tau - \frac{2\cc}{3} z - \frac{z^2}{2(\tau+1)}\right\}.
	\]
	The first three factors in the exponential are typical of supersolutions for Fisher-KPP. 
	The last factor is to cancel the contributions 
	due to the $\frac{z}{\tau+1} \partial_z$ term in~\eqref{e.ueq}.
	
	Define the parabolic domain $\cP = (0,\infty)\times(0,\infty)$.  We aim to use the comparison principle to bound $u_{\cc}$ from above by $\overline u$ on $\cP$.  To that end, we first check that $\overline u \geq u_{\cc}$ on the parabolic boundary of $\cP$.  There are two components to check: when $\tau = 0$ and when $z = 0$.  In the former case, this is clearly satisfied by choosing $A$ sufficiently large as $u_{\cc}(\tau=0,\cdot) = g_0(\cc+ \cdot) \in C_c(\R)$.  In the latter case, we note that
	\[
		\overline u(\tau, 0)
			= A(\tau+1)\exp\left\{\tau \left( 3 \overline E_+(\tau) - \frac{2\cc^2}{3}\right)\right\}
			= A(\tau+1)\exp\left\{ 3\tau \left(\overline E_+(\tau) - \thc\right)\right\}.
	\]
	where the second equality follows by the definition of $\thc$ and $\cc$ (see~\eqref{e.cc_thc}).  Since, by construction, $\overline E_+ \geq \thc$, it follows that $\overline u(\tau,0) \geq A$ for all $\tau \geq 0$.  By \Cref{t.decay} and its definition, $u_{\cc}$ is bounded above uniformly for all $\tau$.  Hence, there is some $A$ such that $\overline u(\tau,0) \geq u_{\cc}(\tau,0)$ for all $\tau\geq 0$.  We conclude that $\overline u \geq u_{\cc}$ on the parabolic boundary of $\cP$.
	
	The last step is to check that $\overline u$ is, in fact, a supersolution of~\eqref{e.ueq} in $\cP$.  We compute:
	\[\begin{split}
		\overline u_\tau
			&- \frac{3}{2} \Delta \overline u
			- 3\overline u(E_h - \overline u)
			- 2\cc \partial_z \overline u
			- \frac{2}{\tau+1} \overline u
			- \frac{z}{\tau+1} \partial_z \overline u\\
			&=
				\overline u\ \Bigg(
					\frac{1}{\tau+1} + 3 E_+ - \frac{2\cc^2}{3} + \frac{z^2}{2(\tau+1)^2}
					- \frac{3}{2} \left( \left(\frac{2\cc}{3} + \frac{z}{(\tau+1)}\right)^2 - \frac{1}{\tau+1}  \right)\\
					&\qquad - 3 E_h + 3\overline u
					+ 2\cc \left( \frac{2\cc}{3} + \frac{z}{\tau+1}\right)
					- \frac{2}{\tau+1}
					+ \frac{2\cc z}{3(\tau+1)} + \frac{z^2}{(\tau+1)^2} \Bigg)\\
			&=
				\overline u\ \Bigg( \frac{1}{2(\tau+1)} + 3(E_+ - E_h) + \frac{2 \cc z}{\tau+1} + 3\overline u\Bigg)
			\geq 0.
	\end{split}\]
	In the inequality, we used that $E_+ \geq E_h$ and $\overline u \geq 0$.  Hence $\overline u$ is a supersolution of~\eqref{e.ueq}.
	
	Combining all the work above, we apply the comparison principle and conclude that $\overline u \geq u_{\cc}$ on $\cP$.
	
	We now conclude the proof.  Let
	\[
		\overline c = \frac{1}{\thc + \lim M_h(\tau_n)} > \frac{1}{\thc + \thc} = \cc
	\]
	Using that $\int h \dy = 1$ and that $h$ is even, we have that
	\begin{equation}\label{e.c2141}
	\begin{split}
		\frac{1}{2}
			&= \int_0^\infty h(y) \dy
			= \int_0^{\overline c} h(y) \dy
				+ \frac{1}{\tau+1} \int_{(\overline c - \cc)(\tau+1)}^\infty u_{\cc}(\tau,z) \dz\\
			&\leq \overline c M_h(\tau) + \frac{1}{\tau+1}\int_{(\overline c - \cc)(\tau+1)}^\infty \overline u(\tau,z)\dz\\
			&= \overline c M_h(\tau) + A  e^{3 \tau\left(\overline E_+ - \frac{2 \cc^2}{9}\right)} \int_{(\overline c - \cc)(\tau+1)}^\infty e^{- \frac{2\cc}{3} z - \frac{z^2}{2(\tau+1)}} \dz\\
			&\leq \overline c M_h(\tau) + \frac{3A}{2 \cc} \exp\left\{3 \tau\left(\overline E_+ - \frac{2 \cc^2}{9} - \frac{2\cc}{9}(\overline c - \cc) \frac{\tau+1}{\tau}\right)\right\}.
	\end{split}
	\end{equation}
	Recall that $\lim \overline E_+ = \thc = 2 \cc^2/9$ and $\overline c > \cc$.  Evaluating the above at $\tau_n$ and taking $n$ to infinity, we find
	\[
		\frac{1}{2}
			\leq \overline c \lim M_h(\tau_n) + 0 
			= \frac{\lim M_h(\tau_n)}{\thc + \lim M_h(\tau_n)}
			< \frac{1}{2},
	\]
	where the last inequality follows because $\lim M_h(\tau_n) < \thc$.  This is a contradiction.  Hence, the proof is complete.
\end{proof}

\subsection{Proof of technical lemmas}\label{s.technical_lemmas}

\subsubsection{The proof of \Cref{l.holder_stability}}

There are two parts to prove.  We show them in order.

\begin{proof}[Proof of \Cref{l.holder_stability}.(i)]
Fix $r$ as in the statement of the lemma.  We abuse notation by denoting $f(r) = f(x)$ for any $|x| = r$.  We compute:
	\[\begin{split}
		E_f
			\leq \int_{B_r} f^2 \dx  + \int_{B_r^c} f^2 \dx
			&\leq M_f \int_{B_r} f \dx + f(r) \int_{B_r^c} f \dx\\
			&= M_f \int_{B_r} f \dx + f(r) \left(1 - \int_{B_r} f \dx\right).
	\end{split}\]
	Recall that $\int_{B_r} f dx < 1$.  Thus, rearranging the above yields
	\[
		M_f \frac{\frac{E_f}{M_f} - \int_{B_r} f \dx}{1 - \int_{B_r} f \dx}
			= \frac{E_g - M_f \int_{B_r} f \dx}{1 - \int_{B_r} f \dx}
			\leq f(r).
	\]
	This completes the proof.
\end{proof}
%
%
%
%
%

%
%
%
%

\begin{proof}[Proof of \Cref{l.holder_stability}.(ii)]
	First we compute:
	\[
		1
			= \int f \dx
			\geq \int_{B_r} f \dx + \int_{B_{|x_0|}\setminus B_r} f \dx
			\geq \int_{B_r} f \dx + \omega_d(|x_0|^d - r^d) f(x_0).
	\]
	The last inequality uses the fact that $f$ is rotationally symmetric and radially decreasing.  The proof is concluded by rearranging the above inequality.
\end{proof}

\subsubsection{The proof of \Cref{l.M_follows_E}}\

\begin{proof}
We begin with the first claim.  Fix $\tau_\eps$ and $\tau_{\rm shift}$ to be determined.  In order to prove this, we first note that $E_h$ cannot stray too far above $E_h(\tau)$.  Indeed, by \Cref{l.slow_E_growth}, we have
\[
	E_h(\tau')
		\leq \left( \frac{\tau'}{\tau}\right)^2 E_h(\tau).
\]
Hence, there is $\mu_\eps>0$ such that if $\tau_{\rm shift} < \mu_\eps \tau$ then $E_h(\tau') \leq (1+ \eps/4) E_h(\tau)$ for all  $\tau' \in [\tau, \tau + \tau_{\rm shift}]$.  Notice that $\mu_\eps$ depends only on $\eps$.

We now construct a simple super-solution that ``pushes'' $M_h$ down to $E_h$.  Indeed, for $\tau' \geq \tau$, let
\[
	\overline h(\tau',y)
		= \left(1+\frac{\eps}{2}\right)\left[ M_h(\tau) e^{-\gamma (\tau'-\tau)}
			+ \left(1 -  e^{-\gamma (\tau'-\tau)}\right) E_h(\tau)\right],
\]
where $\gamma>0$ is a constant to be determined.  Notice that $\overline h(\tau,y) \geq M_h(\tau) \geq h(\tau,y)$ for every $y$.

We next show that $\overline h$ is a supersolution of~\eqref{e.heq}.  Indeed, for any $\tau' \in [\tau, \tau+\tau_{\rm shift}]$,
\[\begin{split}
	\partial_{\tau'} \overline h
		&- \frac{3}{2(\tau'+1)^2} \Delta \overline h
		- 3 \overline h (E_h(\tau') - \overline h)
		- \frac{2}{\tau'+1}\left( \overline h + y \partial_y \overline h\right)\\
		&= \gamma (1+\eps/2) E_h(\tau) + 3\overline h \left( \overline h - \frac{\gamma}{3} -  E_h(\tau') - \frac{2}{3(\tau' + 1)}\right)\\
		&\geq \gamma (1+\eps/2) E_h(\tau) + 3\overline h \left( \overline h - \frac{\gamma}{3} -  \Big(1 + \frac{\eps}{4}\Big)E_h(\tau) - \frac{2}{3(\tau_\eps + 1)}\right).
\end{split}\]
In the last step, we used that $E_h(\tau') \leq (1+\eps/4)E_h(\tau)$ for all $\tau' \in [\tau, \tau+\tau_{\rm shift}]$.  
Recall that $M_h(\tau) \geq E_h(\tau)$.  Thus, $\overline h \geq (1+\eps/2) E_h(\tau)$, and we find
\[\begin{split}
	\partial_\tau \overline h
		&- \frac{3}{2(\tau'+1)^2} \Delta \overline h
		- 3 \overline h (E_h(\tau') - \overline h)
		- \frac{2}{\tau'+1}\left( \overline h + y \partial_y \overline h\right)\\
	&\geq \gamma (1+\eps/2) E_h(\tau) + 3\overline h \left( \frac{\eps}{4} E_h(\tau) - \frac{\gamma}{3} - \frac{2}{3(\tau_\eps + 1)}\right).
\end{split}\]
From \Cref{t.decay}, there exists $C>0$ such that $C^{-1} < E_h(\tau)\leq M_h(\tau) < C$ for all $\tau \geq 1$.  Thus, choosing $\gamma = \eps/(24 C)$ and increasing $\tau_\eps$ if necessary, we find
\[
	\partial_\tau \overline h
		- \frac{3}{2(\tau'+1)^2} \Delta \overline h
		- 3 \overline h (E_h(\tau') - \overline h)
		- \frac{2}{\tau'+1}\left( \overline h + y \partial_y \overline h\right)
		\geq 0.
\]

It follows from the comparison principle that $\overline h(\tau',\cdot) \geq h(\tau',\cdot)$ for all $\tau' \in [\tau, \tau + \tau_{\rm shift}]$.  Thus, we conclude that
\[
	M_h(\tau + \tau_{\rm shift})
		\leq \overline h(\tau + \tau_{\rm shift},0)
		\leq (1+\eps/2) C e^{-\frac{\eps}{24 C} \tau_{\rm shift}} + (1+ \eps/2) E_h(\tau).
\]
It is clear that if $\tau_{\rm shift}$ is chosen large enough, depending only on $\eps$, then the right hand side above is bounded by $(1+\eps)E_h(\tau)$, as desired.  This finishes the proof of the first claim.

We now prove the second claim.  First note that $\liminf E_h \leq \liminf M_h$ since $E_h \leq M_h$.  Hence, if we find the sequence $\tau_n$ in the statement of the lemma, then it follows that $\liminf E_h = \liminf M_h$ and the proof is concluded.  In order to find such a sequence, it is enough to establish the following claim:
\[
	\text{for every $\eps, \tau_0>0$, there is $\tau_1\geq \tau_0$ such that }
		M_h(\tau_1), E_h(\tau_1) \leq (1+\eps)^2\liminf E_h.
\]
It is clear that the procedure used to establish the first claim yields the desired $\tau_1$.  Indeed, after choosing, in the notation above, $\tau_\eps >\tau_0$, $\tau \geq \tau_\eps$ such that $E_h(\tau) \leq (1+\eps)\liminf E_h$, and letting $\tau_1 = \tau + \tau_{\rm shift}$, we find
\[\begin{split}
	&M_h(\tau_1) \leq (1+\eps) E_h(\tau)
		\leq (1+\eps)^2\liminf E_h
		\quad\text{ and }\quad\\
	&E_h(\tau_1) \leq (1+\eps/4)E_h(\tau)
		\leq (1+\eps/4)(1+\eps) \liminf E_h.
\end{split}\]
This concludes the proof.
\end{proof}

\subsubsection{The proof of \Cref{p.growing_u_c}} \label{s.growing_u_c}

%
%

We break this proposition into two smaller lemmas. The first (\Cref{l.weak_lower_bound}) shows that $u_c$ remains locally uniformly bounded below over the entire time interval $[\tau_0,\tau_1]$, while the second (\Cref{l.strong_lower_bound}) shows that $u_c$ grows from this initial weak lower bound to the claimed value $E(\tau_1) - \eps$ over a terminal boundary layer $[\tau_1(1-\beta_\eps), \tau_1]$ for $\beta_\eps \approx \eps$.  In this final step, it is crucial that $E_h$ grows ``slowly'' as in \Cref{l.slow_E_growth}.  We state these lemmas here and then show how to use them to conclude \Cref{p.growing_u_c}.   Afterwards, we prove them.

Our first lemma is below.  Similar results exist in the literature (see, e.g., a very general work of Berestycki, Hamel, and Nadin~\cite{BerestyckiHamelNadin}); however, we are unable to find one that, applied out-of-the-box, yields the result below with the uniformity in all parameters and allows for the particular assumptions that we require.  As such, we provide a proof below, although the ideas are standard.
\begin{lemma}\label{l.weak_lower_bound}
	Fix any $T_0 < T_1$, $\mu$, $\delta$, and $c$.  Suppose that $R > 1 + 10\pi/\sqrt\delta$ and 
	\begin{equation}\label{e.ueq_subsol}
	\begin{cases}
		\partial_\tau u
			= \frac{3}{2} \Delta u + 3 u(f(\tau) - u)
				+ 2c \partial_z u + a(\tau,z) \partial_z u
					\qquad &\text{ in } (T_0,T_1)\times B_{2R},\\
		u \geq \mu \1_{B_R }
					\qquad &\text{ on } \{T_0\}\times B_{2R}( 0),
	\end{cases}
	\end{equation}
	where $f$ and $a$ are continuous and $f$ satisfies
	\begin{equation}\label{e.c2174}
		18 f(\tau) - 4 c^2 > \delta \ \ \text{ for all } \tau\in[T_0,T_1].
	\end{equation}
	If $\|a\|_{L^\infty([T_0,T_1]\times B_{2R})}$ is sufficiently small depending only on $\delta$ and $c$, then there exists $C_{\delta,f}$, depending only on $\delta$ and $\|f\|_\infty$, and $T$, depending only on $\|f\|_{\infty}$, $\|a\|_\infty$, $\mu$, $\delta$, and $R$, such that, if $\tau \in [T_0 + T, T_1]$, then
	\[
		\min_{z \in B_R } u(\tau,z) \geq \frac{1}{C_{\delta,f}}.
	\]
\end{lemma}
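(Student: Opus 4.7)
My plan is to prove this by the classical Fisher--KPP subsolution method, exploiting the fact that the condition $18f(\tau) - 4c^2 > \delta$ is precisely the requirement that the minimum KPP spreading speed $\sqrt{18f}$ strictly exceeds twice the drift $2|c|$, so that a small positive datum expands rather than being swept out of the domain. Note first that $|c| \leq 3\sqrt{\|f\|_\infty/2}$, so $c$ is controlled by $\|f\|_\infty$, and that $f(\tau) \geq \tilde f := (4c^2+\delta)/18$; since any subsolution for the reaction $3u(\tilde f - u)$ is also a subsolution for $3u(f-u)$, I may replace $f$ by the constant $\tilde f$.

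Second, I will construct an explicit principal Dirichlet eigenfunction on a carefully chosen ball $B_{R'}$ with $R' \in (R, 2R)$. The symmetrizing weight $e^{2cz/3}$ converts $L := \tfrac{3}{2}\Delta + 2c\partial_z + 3\tilde f$ into the self-adjoint operator $\tfrac{3}{2}\Delta + \delta/6$, whose principal Dirichlet eigenfunction on $B_{R'}$ is $\cos(\pi z/(2R'))$. Pulling back, $\phi(z) = e^{-2cz/3}\cos(\pi z/(2R'))$ satisfies $L\phi = \lambda\phi$ with $\lambda = \delta/6 - 3\pi^2/(8R'^2)$, and the hypothesis $R > 1 + 10\pi/\sqrt\delta$ provides ample room to ensure $\lambda \geq \delta/12$. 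Before deploying this, I establish a preliminary positivity bound $u(T_0 + t_0, z) \geq \mu_1 > 0$ on a slightly smaller ball via the heat-kernel (Feynman--Kac) representation of the linearization $\partial_\tau v \geq \tfrac{3}{2}\Delta v + (2c+a)\partial_z v - \gamma v$, where the a priori upper bound on $u$ turns the reaction into a bounded zeroth-order term; the constants $t_0,\mu_1$ depend on $\mu, R, \|f\|_\infty, \|a\|_\infty, c$.

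Third, I take $\underline u(\tau,z) := \eta(\tau)\phi(z)$ as a subsolution on $[T_0+t_0,T_1]\times B_{R'}$. Choosing $\eta(T_0+t_0)$ small provides $\underline u \leq u$ on the bottom, and $\underline u = 0 \leq u$ on the lateral boundary $\partial B_{R'}$ by nonnegativity of $u$. Using $L\phi = \lambda\phi$ and $f \geq \tilde f$, the subsolution inequality reduces to the pointwise requirement
\[
\eta' \leq \lambda\eta - 3\eta^2\phi(z) + a(\tau,z)\,\eta\,\phi'(z)/\phi(z).
\]
This last term is the main obstacle, since $|\phi'/\phi|$ blows up like $1/\mathrm{dist}(\cdot,\partial B_{R'})$, so smallness of $\|a\|_\infty$ alone does not suffice pointwise. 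My plan is to restrict the subsolution to an interior sub-ball $B_{R''} \subset B_{R'}$ on which $|\phi'/\phi|$ is bounded by a constant depending only on $\delta$ (and the bounded $c$), and choose $\|a\|_\infty$ small enough that $\|a\|_\infty \sup_{B_{R''}}|\phi'/\phi| < \lambda/2$; on the thin boundary layer $B_{R'}\setminus B_{R''}$ the preliminary positivity from the heat-kernel step, combined with the quadratic $-3\eta^2\phi^2$ damping, will be used to absorb the singular piece. The resulting logistic inequality $\eta' \leq (\lambda/2)\eta - C\eta^2$ drives $\eta$ up to $\eta^* = \lambda/(4C)$ in time $T' = T'(\mu,R,\|a\|_\infty,\|f\|_\infty,\delta)$.

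Finally, to upgrade the bound $u \geq \eta^*\phi$ to a uniform lower bound on all of $B_R$ with a constant depending only on $\delta$ and $\|f\|_\infty$ (and not on $R$, $\mu$, or $\|a\|_\infty$), I will iterate the argument using translated eigenfunctions $\phi(\cdot - z_0)$ centered at arbitrary $z_0 \in B_R$: once uniform positivity is established on one sub-ball, it supplies the initial data to run the same subsolution argument on overlapping translates. Covering $B_R$ by finitely many such translates yields $\min_{B_R} u \geq 1/C_{\delta,f}$ with $C_{\delta,f}$ determined by $\lambda$ and $\min\phi$ on the relevant sub-ball, i.e., by $\delta$ and $\|f\|_\infty$ alone. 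The hard part throughout is the boundary-layer analysis for the perturbative $a$-term; all other ingredients (the spectral computation, the heat-kernel lower bound, and the logistic ODE) are standard.
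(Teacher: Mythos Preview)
Your overall architecture is the same as the paper's: use a Harnack/heat-kernel step to get uniform positivity on a slightly larger ball at a slightly later time, then run a compactly supported Fisher--KPP subsolution centered at an arbitrary $z_0\in B_R$, with the exponential weight $e^{-2cz/3}$ removing the constant drift and the hypothesis $18f-4c^2>\delta$ providing a uniformly positive spectral gap. So far so good, and your observation that $|c|\leq 3\sqrt{\|f\|_\infty/2}$ is exactly what makes the final constant depend only on $\delta$ and $\|f\|_\infty$.

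The genuine gap is in your boundary-layer treatment of the $a\partial_z$ perturbation. With the profile $\phi(z)=e^{-2cz/3}\cos(\pi z/(2R'))$, the subsolution inequality for $\underline u=\eta\phi$ (after using $L\phi=\lambda\phi$ and $f\ge\tilde f$) reduces to
\[
\eta'\,\phi \;-\;\lambda\,\eta\,\phi \;+\; 3\eta^2\phi^2 \;-\; a\,\eta\,\phi' \;\le\; 0.
\]
Near $z=R'$ one has $\phi\sim R'-z$, $\phi'\to$ a nonzero constant, so the first three terms are $O(R'-z)$ while $-a\eta\phi'$ stays bounded away from zero with sign governed by $a$. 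The ``quadratic damping'' $3\eta^2\phi^2=O((R'-z)^2)$ cannot absorb an $O(1)$ term, and restricting to an interior ball $B_{R''}$ is circular: on $\partial B_{R''}$ you would need $\underline u\le u$, i.e.\ a uniform-in-time lower bound on $u$ there, which is precisely the conclusion you are trying to establish. So the argument as written does not close.

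The paper's remedy is a one-line modification that you are missing: replace $\cos$ by $\cos^2$ in the profile. With $\phi\propto e^{-\lambda z}\cos^2(\pi z/(2\rho))$, the Laplacian produces a term $-\tfrac{3\pi^2}{2\rho^2}\dfrac{\sin^2}{\cos^2}$ (after dividing by $\underline u$) that blows up \emph{quadratically} at the boundary with a good sign, while the troublesome drift term $a\,\phi'/\phi$ now behaves like $\dfrac{\sin}{\cos}$ and blows up only linearly. A single application of Young's inequality,
\[
\Big|\tfrac{\sin}{\cos}\Big|\cdot \tfrac{|a|\pi}{\rho}
\;\le\; \tfrac{\pi^2}{2\rho^2}\,\tfrac{\sin^2}{\cos^2} \;+\; \tfrac{|a|^2}{2},
\]
then absorbs the perturbation globally on $B_\rho(z_0)$, at the cost of the bounded remainder $|a|^2/2$, which is exactly where the smallness assumption on $\|a\|_\infty$ enters. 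With this change your logistic ODE for $\eta$ goes through uniformly in $z$, no boundary-layer splitting and no covering iteration are needed, and the final lower bound $\underline u(T_1,z_0)\gtrsim \delta\, e^{-\lambda\rho}$ depends only on $\delta$ and $\|f\|_\infty$.
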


The heuristic of \Cref{l.weak_lower_bound} is that if the reaction $f$, advection $a$, and speed $c$ satisfy a sub-minimal speed condition~\eqref{e.c2174} in a uniform way, then $u$ propagates at speed $c$ in the sense that it remains $O(1)$ regardless of the particular fluctuations of $f$ and $g$.  We note that it is crucial for our estimates that this is uniform in $c$ and $f$.

Our next lemma is the following.
\begin{lemma}\label{l.strong_lower_bound}
	Fix any $\tau_0$, $\tau_1$, $\mu$, $R$, $\rho$, $\eps$, $\delta$, and $c$.  Suppose that $u$ solves
	\[\begin{cases}
		\partial_\tau u
			= \frac{3}{2} \Delta u + 3 u(\rho - u)
				+ 2c \partial_z u + \frac{2}{\tau + 1} u + \frac{z}{\tau+1} \partial_z u
					\qquad &\text{ in } (\tau_0,\tau_1)\times B_R ,\\
		u = \mu
					\qquad &\text{ on } \{\tau_0\}\times B_R,\\
		u = 0
			\qquad &\text{ on } (\tau_0, \tau_1)\times \partial B_R
	\end{cases}\]
	and $18 \rho - 4 c^2 > \delta$.  If $\tau_0$ and $R$ are sufficiently large, depending only on $\delta$ and $\epsilon$, then there exists $\underline \tau$, depending only on $\mu$, $\eps$, and $\delta$, such that, if $\tau_1 - \tau_0 \geq \underline \tau$, then
	\[
		u(\tau_1,0) \geq \rho - \eps.
	\]
\end{lemma}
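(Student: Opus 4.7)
The strategy is to build a compactly supported sub-solution of~\eqref{e.ueq} that grows from the initial constant $\mu$ up to at least $\rho - \eps$ at the origin by time $\tau_1$, exploiting the sub-criticality condition $18\rho - 4c^2 > \delta$: it says $2|c|$ is strictly less than the Fisher-KPP spreading speed $3\sqrt{2\rho}$, so Fisher-KPP-type invasion should drive $u$ upward toward $\rho$ near the center of the ball.

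First, I would rewrite the equation in the form of \Cref{l.weak_lower_bound} by absorbing the perturbation into $f(\tau) = \rho + \tfrac{2}{3(\tau+1)}$ and $a(\tau,z) = z/(\tau+1)$. Since $f \geq \rho$, the condition $18 f - 4c^2 > \delta$ is preserved, and $\|a\|_{L^\infty([\tau_0,\tau_1]\times B_R)} \leq R/(\tau_0+1)$ is small provided $\tau_0 \gg R$. Applying \Cref{l.weak_lower_bound} on a smaller ball, say $B_{R/2}$ (with $R \geq 2 + 20\pi/\sqrt\delta$), yields $u \geq 1/C$ on $B_{R/2}$ after some time $T_1$ that depends only on $\mu, \delta, \eps$; this provides a ``launching pad'' for the subsequent amplification.

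Next, I would amplify from $1/C$ up to $\rho - \eps$ using the positive Dirichlet steady state $\psi$ of the autonomous problem
\[
-\tfrac{3}{2}\Delta \psi - 2c\partial_z \psi = 3\psi\bigl(\rho - \tfrac{\eps}{2} - \psi\bigr), \quad \psi|_{\partial B_{R_0}} = 0,
\]
for $R_0$ large depending only on $\delta, \eps$. Removing the advection via the conjugation $\psi = e^{-2cz/3}\tilde\psi$ shows that the principal Dirichlet eigenvalue of the linearization equals $3(\rho - \eps/2) - \tfrac{2c^2}{3} - O(R_0^{-2})$, which is bounded below by $\delta/12$ for small $\eps$ and large $R_0$. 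Standard Fisher-KPP theory then gives existence, uniqueness, and the pointwise limit $\psi(0) \to \rho - \eps/2$ as $R_0 \to \infty$, uniformly in admissible $c$, so $R_0$ can be chosen with $\psi(0) \geq \rho - 3\eps/4$. I would then verify that $\psi$, truncated to a slightly smaller ball $B_{R_0^*}$ where $\psi|_{\partial B_{R_0^*}} \leq \eps/4$, via $\underline u := \max(\psi - \psi|_{\partial B_{R_0^*}}, 0)$, is a sub-solution of~\eqref{e.ueq}: a direct calculation using the steady-state equation gives a reaction slack of $\tfrac{3\eps}{2}\psi$, which dominates the perturbation $\tfrac{2}{\tau+1}\psi + \tfrac{z}{\tau+1}\partial_z\psi$ as long as $|\partial_z\psi/\psi|$ is bounded on $B_{R_0^*}$ (which holds because $\psi$ stays above a positive constant there) and $\tau_0$ is sufficiently large depending on $R_0, \eps$. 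A standard linear exponential growth argument, using positivity of the principal eigenvalue, then lifts $\sigma \underline u$ by a growing multiplicative factor $\sigma$ starting from the lower bound $u \geq 1/C$, until $u$ dominates $\underline u$; evaluating at $\tau_1$ and $z=0$ gives $u(\tau_1, 0) \geq \psi(0) - \psi|_{\partial B_{R_0^*}} \geq \rho - \eps$, with total time $\underline\tau = \underline\tau(\mu, \eps, \delta)$.

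The main obstacle lies in the sub-solution construction: the natural choice $\psi$ alone is not a sub-solution near $\partial B_{R_0}$, because $|\partial_z\psi/\psi|$ blows up at the boundary and cannot be absorbed by the reaction slack even for very large $\tau$. The truncation device fixes this inside $B_{R_0^*}$, but introduces corrections of order $b = \psi|_{\partial B_{R_0^*}}$ that must be controlled, and requires verifying the sub-solution inequality across the free boundary $\{|z| = R_0^*\}$ (in the viscosity sense, since $\underline u$ is only $C^{0,1}$ there). Handling the chain of scales $R > R/2 > R_0 > R_0^*$ and the uniformity of all estimates in the parameter $c$ is the principal technical challenge.
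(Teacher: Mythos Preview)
Your constructive approach is plausible and, with enough care, should work, but it is genuinely different from the paper's proof, which is a short compactness-and-contradiction argument.  The paper assumes the conclusion fails along sequences $\underline\tau_n,\overline\tau_n,R_n\to\infty$, shifts time so that the putative bad point sits at $\tau=0$, and uses parabolic regularity to extract a limit $u_\infty$ that solves the \emph{autonomous} Fisher--KPP equation $\partial_\tau u_\infty=\tfrac32\Delta u_\infty+3u_\infty(\rho-u_\infty)+2c\partial_zu_\infty$ on all of $\R\times\R$ (the $\tfrac{2}{\tau+1}$ and $\tfrac{z}{\tau+1}$ terms vanish in the limit because $\underline\tau_n\to\infty$).  \Cref{l.weak_lower_bound} is invoked only on this limiting equation to guarantee $u_\infty\ge\underline\mu>0$ everywhere; then, since $u_\infty(0,0)\le\rho-\eps$, the strong maximum principle at an interior minimum (or a re-centering trick if the infimum is not attained) gives an immediate contradiction.

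Your route instead keeps the finite-domain problem and builds an explicit Dirichlet steady state $\psi$ of a slightly reduced Fisher--KPP equation as a barrier.  The upside is that it is direct and in principle gives an explicit $\underline\tau$; the downside is exactly the obstacle you flag---the blow-up of $|\partial_z\psi/\psi|$ near $\partial B_{R_0}$ forces a truncation $\max(\psi-b,0)$, and you must then check the sub-solution property across the free boundary and track a chain of scales $R>R/2>R_0>R_0^*$, all uniformly in admissible $c$.  None of this is needed in the paper's argument: by passing to the limit first, the domain becomes the whole line, the drift perturbation disappears, and the conclusion follows from a one-line maximum-principle computation.  So while both approaches are viable, the compactness argument is substantially shorter and sidesteps every technical issue you identify.
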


The main difference between the following and \Cref{l.weak_lower_bound} is that the lower bound at the final time is much stronger at the cost of having a constant reaction term $3\rho$.  In particular, as long as the sub-minimal speed condition ($18\rho-4c^2 > \delta$) is satisfied and $\tau$ is sufficiently large that the last two terms in~\eqref{e.ueq} are negligible, then $u$ must grow to the carrying capacity $\rho$ {\em uniformly} among all speeds.

We first show how to conclude \Cref{p.growing_u_c} from \Cref{l.weak_lower_bound,l.strong_lower_bound}.

\begin{proof}[Proof of \Cref{p.growing_u_c}]
First we notice that, by \Cref{l.slow_E_growth}, there is $\beta_\eps\in(0,1)$, depending only on $\eps$, such that
	\[
		E_h(\tau_1)
			\leq \left(\frac{\tau_1+1}{\tau+1}\right)^2 E_h(\tau)
			\leq \frac{1}{(1-\beta_\eps)^2} E_h(\tau)
			\leq \frac{1}{1 - \frac{\eps}{2E_h(\tau_1)}} E_h(\tau),
	\]
 	for all $\tau \in [\tau_1(1 - \beta_\eps), \tau_1]$.  Recall that $E_h(\tau_1)$ is bounded uniformly above and below by \Cref{t.decay}, and, hence, $\beta_\eps$ can be chosen independent of $E_h(\tau_1)$.  Thus,
	\begin{equation}\label{e.c2171}
		E_h(\tau) \geq E_h(\tau_1) - \frac{\eps}{2}
			\qquad\text{ for all } \tau \in [\tau_1(1-\beta_\eps), \tau_1].
	\end{equation}
	
	Fix $R> 1 + 10\pi/\sqrt \delta$ large enough such that \Cref{l.strong_lower_bound} can be applied with the choice
	\be\label{e.c761}
		\rho 
			= \min_{\tau'\in[\tau_1(1-\beta_\eps),\tau_1]} E_h(\tau')
			\ \ \left(\geq \max \left\{E(\tau_1) - \frac{\eps}{2},\frac{1}{C}\right\}\right)
	\ee
	and the $\eps$ in \Cref{l.strong_lower_bound} taking the value of $\eps/2$ in the current proof.  As $18 E_h - 4c^2>\delta$, by assumption, it follows that $18 \rho - 4c^2 > \delta$.
	
	Fix an intermediate time $\tau_{\rm int} \in [\tau_1(1-\beta_\eps),\tau_1]$ to be determined.  Applying \Cref{l.weak_lower_bound} on the time interval $[\tau_0,\tau_{\rm int}]$, we find that, up to increasing $\tau_0$ if necessary (to make the coefficients in~\eqref{e.ueq} sufficiently small) and choosing $\tau_{\rm int} - \tau_0$ sufficiently large, there is $C_\delta>0$ such that
	\begin{equation}\label{e.c2172}
		u_c(\tau_{\rm int}(1-\beta_\eps), z)
			\geq \frac{1}{C_\delta}
				\qquad\text{ for all } z \in B_R.
	\end{equation}
	
	We now let $\underline u$ be the solution of
	\[\begin{cases}
		\partial_\tau \underline u = \frac{3}{2} \Delta \underline u + 3 \underline u(\rho - \underline u) + 2 c \partial_z \underline u + \frac{2}{\tau+1}\underline u + \frac{z}{\tau+1} \partial_z \underline u,
			~~ &\text{ in } (\tau_{\rm int}, \infty) \times B_R ,\\
		\underline u = \frac{1}{C_\delta},
			~~ & \text{ on } \{\tau_{\rm int}\}\times B_R ,\\
		\underline u = 0
			~~ &\text{ on } (\tau_{\rm int}, \infty) \times \partial B_R .
	\end{cases}	\]
	By~\eqref{e.c2172}, $\underline u \leq u_c$ on $\{\tau_{\rm int}\}\times B_R $.  By the choice of boundary conditions $\underline u \leq u_c$ on $[\tau_{\rm int},\tau_1]\times \partial B_R $.  By our choice of $\rho$ and by~\eqref{e.c2171}, $\underline u$ is a subsolution of~\eqref{e.ueq}.  Thus, the comparison principle implies that $\underline u \leq u_c$ on $[\tau_{\rm int},\tau_1]\times B_R $.
	
	Up to increasing $\tau_1-\tau_0$ (recall that $\tau_{\rm int}$ has already been fixed relative to $\tau_0$), we have that $\tau_1 - \tau_{\rm int}$, the length of the time interval $[\tau_{\rm int},\tau_1]$, is sufficiently large to apply \Cref{l.strong_lower_bound} and conclude that
	\[
		E(\tau_1) - \eps
			\leq \rho - \frac{\eps}{2}
			\leq \underline u(\tau_1,0)
			\leq u_c(\tau_1,0).
	\]
	The first inequality is by~\eqref{e.c761}, the second by \Cref{l.strong_lower_bound}, 
	and the third by the ordering $\underline u\leq u_c$ outlined in the previous paragraph.   This concludes the proof.
\end{proof}


We now prove the two lemmas.

\begin{proof}[Proof of \Cref{l.weak_lower_bound}]
	Let $\rho = 10 \pi / \sqrt \delta$.  By assumption $R + \rho + 1 < 2R$.  By the Harnack inequality, there exists $\underline \mu$, depending only on $\|f\|_{L^\infty[T_0,T_1]}$, $\|g\|_{L^\infty([T_0,T_1]\times B_{2R})}$, $\mu$, and $\rho$, such that
	\[
		u(T_0 + 1, \cdot) \geq \underline \mu\1_{B_{R + \rho}}.
	\]
	In particular, we have the bound $u(T_0 + 1, \cdot) \geq \underline \mu \1_{B_\rho(z_0)}$ for any $z_0 \in B_R$.  Our goal is to obtain a lower bound on $u(T_1,z_0)$ via the construction of a subsolution $\underline u$ on $[T_0+1,T_1]\times B_\rho(z_0)$.  For notational ease, we assume $z_0 = 0$ and $T_0 + 1 = 0$ for the remainder of the proof since all arguments are translation invariant.

	We now construct $\underline u$.  Let $\lambda>0$ and $\phi: [0,T_1]\to \R$ be a constant and function determined, respectively.  Then let
	\[
		\underline u(\tau,z)
			= \underline \mu e^{-\lambda (\rho + z)} e^{\int_0^\tau \phi(s)ds} \cos\left(\frac{z\pi}{2\rho}\right)^2.
	\]
	Up to decreasing $\underline \mu$, we can choose $\phi$ in the sequel such that
	\begin{equation}\label{e.c2173}
		\underline u \leq \frac{\delta}{100}
			\qquad \text{ on } [0,T_1]\times B_\rho.
	\end{equation}
	
	Notice that $\underline u \leq\underline\mu \leq u$ on $\{0\}\times B_\rho$.  Moreover, $\underline u \leq u$ on $\partial B_\rho$ since $u$ is positive.  Hence, we need only check that $\underline u$ is a subsolution of~\eqref{e.ueq_subsol} in order to conclude, via the comparison principle, that $\underline u \leq u$ on $[0,T_1]\times B_\rho$.  To this end, we compute that, using~\eqref{e.c2173},
	\[\begin{split}
		&\partial_\tau \underline u
			- \frac{3}{2} \Delta \underline u
			- 3 \underline u (f - \underline u)
			- (2 c + a) \partial_z \underline u\\
		&\leq
			\phi \underline u
				- \frac{3}{2}
					\Big(\lambda^2 \underline u
					+ \frac{2 \pi\lambda \underline\mu e^{- \lambda( \rho+z) + \int_0^\tau \phi(s) ds}}{\rho} \cos \sin
%
				+ \frac{\pi^2 \underline\mu e^{- \lambda (\rho +z)+ \int_0^\tau \phi(s) ds }}{2 \rho^2} \left(\sin^2 -\cos^2\right)\Big)\\
		&\qquad
				- 3 \underline u \left(f  - \frac{\delta}{100}\right)
				+ (2c + a) \left(\lambda \underline u + \frac{\pi\underline\mu e^{-
 \lambda (\rho+z) + \int_0^\tau \phi(s) ds}}{\rho} \cos  \sin\right)\\
 		&= -\underline u \left( \frac{3 \lambda^2}{2} + 3 f  - 3 \frac{\delta}{100} - \frac{\pi^2}{2\rho^2} - (2c + a) \lambda - \phi\right)
 			+ \underline u \frac{\sin}{\cos} \frac{(2c + a - 3\lambda )\pi}{\rho}
 			- \underline u  \frac{\sin^2}{\cos^2} \frac{\pi^2}{2\rho^2}.
%
	\end{split}\]
	We now select $\lambda = 2c/3$.  Using this choice of $\lambda$ and condition~\eqref{e.c2174} to find
	\[\begin{split}
		&\partial_\tau \underline u
			- \frac{3}{2} \Delta \underline u
			- 3 \underline u (f - \underline u)
			- (2 c + a) \partial_z \underline u\\
			&\leq
				-\underline u \left( \frac{\delta}{10}  - \frac{\pi^2}{2\rho^2} - \frac{2c a}{3} - \phi\right)
 			+ \underline u \frac{\sin}{\cos} \frac{a \pi 2c}{3\rho}
 			- \underline u  \frac{\sin^2}{\cos^2} \frac{\pi^2}{2\rho^2}.
	\end{split}\]
	Next,  see that, by Young's inequality
	\[
		\frac{\sin}{\cos} \frac{a \pi 2c}{3\rho}
			\leq \frac{\sin^2}{\cos^2} \frac{\pi^2}{2\rho^2}
				+ \frac{2 a^2 c^2}{9}.
	\]
	Thus,
	\be\label{e.c391}
		\partial_\tau \underline u
			- \frac{3}{2} \Delta \underline u
			- 3 \underline u (f - \underline u)
			- (2 c + a) \partial_z \underline u\\
			\leq
				-\underline u \left( \frac{\delta}{10} - \frac{\pi^2}{2\rho^2} - \frac{2c a}{3} - \phi - \frac{2 a^2 c^2}{9}\right).
	\ee
	Using the choice $\rho = 10 \pi / \sqrt{\delta}$ and letting
	\[
		\phi(\tau) = \frac{\delta}{100} \1_{[0,T]}(\tau)
			\qquad \text{ with } T = \frac{100}{\delta} \log\left(\max\left\{1,\frac{\delta}{100 \underline\mu}\right\}\right),
	\]
	(clearly $T\leq T_1$ if $T_1$ is sufficiently large), we find
	\[
		\partial_\tau \underline u
			- \frac{3}{2} \Delta \underline u
			- 3 \underline u (f - \underline u)
			- (2 c + g) \partial_z \underline u\\
			\leq -\underline u \left( \frac{\delta}{25} - \frac{2c a}{3} - \frac{2 a^2 c^2}{9}\right)
	\]
	Thus $\partial_\tau \underline u - \frac{3}{2} \Delta \underline u - 3 \underline u (f - \underline u) - (2 c + a) \partial_z \underline u \leq 0$ if $\|a\|_\infty$ is sufficiently small.  We conclude, via the comparison principle, that $\underline u \leq u$.  Using the form of $\phi$ yields
	\[
		\underline u(T_1,0)
			\geq \frac{\delta}{100} e^{-\lambda \rho}.
	\]
	Recall that $\rho$ is given explicitly in terms of $\delta$ and $\lambda$ is bounded by $3 \sqrt{\|f\|_\infty/2}$ due to~\eqref{e.c2174}.  Hence $u(T_1,0)$ is bounded below in a way only depending on $\delta$ and $\|f\|_\infty$, as claimed.  This concludes the proof.
%
%
\end{proof}

\begin{proof}[Proof of \Cref{l.strong_lower_bound}]
	We prove this by contradiction.  Suppose there exists $\underline \tau_n$, $\overline \tau_n$, and $R_n \leq \sqrt{\underline\tau_n}$ all tending to infinity such that $\underline u(\underline \tau_n, \cdot) \geq \mu$ on $B_{R_n}(0)$ and $\underline u(\underline\tau_n + \overline \tau_n, 0) < \rho-\eps$.
	
	Let $u_n(\tau, z) = \underline u(\underline\tau_n + \overline \tau_n + \tau, z)$.  By the maximum principle, $\underline u \leq \max\{\mu, \rho + 2/3\}$. Thus $u_n$ is uniformly bounded in $L^\infty$, which, using parabolic regularity theory\footnote{See, e.g.,~\cite[Theorem 4.9, Theorem 6.9]{Lieberman} which correspond to Schauder and De Giorgi estimates, respectively.  Also, $C^{2+\alpha}_{\rm parabolic}$ mentioned here refers to the standard parabolic H\"older spaces.  Roughly, this corresponds to $C^{1+\alpha/2}$ regularity in $t$ and $C^{2+\alpha}$ regularity in $z$.}, yields a uniform bound in $C^{2+\alpha}_{\rm parabolic}$ for any $\alpha \in(0,1)$.  Thus, there exists $u_\infty$ such that $u_n \to u_\infty$ locally uniformly in $C^2_{\rm parabolic}$ and $u_\infty$ solves
	\[
		\partial_\tau u_\infty
			= \frac{3}{2} \Delta u_\infty
				+ 3 u_\infty(\rho - u_\infty)
				+ 2 c \partial_z u_\infty
					\qquad\text{ in } \R \times \R.
	\]
	Applying \Cref{l.weak_lower_bound}, we have that $u_\infty \geq \underline \mu$ on $\R\times \R$ for some $\underline \mu > 0$.  
	
	Notice that
	\[
		u_\infty(0,0)
			= \lim_{n\to\infty} \underline u(\underline\tau_n + \overline \tau_n, 0)
			\leq \rho - \eps,
	\]
	where the inequality follows by assumption, it follows that $\inf_{\R\times\R} u_\infty \in [\underline\mu, \rho-\eps]$.  There are two cases to consider.
	
	{\bf Case one: $u_\infty$ achieves its minimum at a point $(\tau_{\min}, z_{\min})$.}  Then, $u_\infty(\tau_{\min},z_{\min}) \in [\underline\mu,\rho-\eps]$ and, at $(\tau_{\min}, z_{\min})$, we have
	\[
		0
			\geq \partial_\tau u_\infty - \frac{3}{2} \Delta u_\infty - 2c \partial_z u_\infty
			= 3 u_\infty (\rho - u_\infty).
	\]
	This is a contradiction since $u_\infty(\tau_{\min},z_{\min}) \in [\underline\mu,\rho-\eps]$ and, hence, at $(\tau_{\min},z_{\min})$,
	\[
		u_\infty (\rho - u_\infty )
			\geq \min\{\underline \mu(\rho - \underline \mu), \eps(\rho-\eps)\}
			>0.
	\]
	Thus, this case cannot occur.
	
	{\bf Case two: $u_\infty$ does not achieve a minimum.}  Here we use a fairly standard re-centering trick.  Indeed, let $(\tau_n, z_n)$ be a sequence such that $u_\infty(\tau_n, z_n) \to \inf u_\infty$ as $n \to\infty$.  Let
	\[
		v_n (\tau,z) = u_\infty(\tau_n + \tau, z_n+z)
			\qquad \text{ for all } (\tau,z) \in \R\times\R.
	\]
	We conclude as above that $v_n \to v_\infty$ for some smooth function $v_\infty$ solving the same equation as $u_\infty$.  In addition, $v_\infty(0,0) = \inf v_\infty \in [\underline \mu, \rho - \eps]$.  At this point, the proof proceeds exactly as in case one, leading to a contradiction.
	
	Since we obtain a contradiction in all cases, the proof is finished.
\end{proof}

\section{Long time dynamics of solutions in higher dimensions}\label{s.higher_dimensions}

We begin with the (somewhat simpler) proof of the moments estimate \Cref{t.hd_moments}.  Afterwards we proceed with the construction of non-Gaussian self-similar solutions for the equation in two dimensions.  



\subsection{Higher dimensional moments estimates}

%
As in the one-dimensional case, the key estimate to establish is an upper bound on $E_g$.  We state this here and prove it in \Cref{s:hd_decay}.

\begin{proposition}\label{p.hd_decay}
	For any $d\geq 1$ and $t>0$,
	\[
		E_g(t)
			\lesssim \left(t+ E_g(0)^{-\frac{2}{d}}\right)^{-\frac{d}{2}}.
	\]
\end{proposition}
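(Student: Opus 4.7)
The plan is to combine the dissipation identity \eqref{e.differential_inequality} with a Nash-type inequality adapted to the convolved macroscopic quantities $E_g$ and $D_g$, producing a closed ODE-inequality for $E_g$ that I integrate directly. The derivation of
\[
	\dot E_g + D_g = -2\int g (w-E_g)^2 \dx
\]
carried out in \Cref{s.E_g_decay} never used $d=1$, so it remains valid in any dimension. In particular, since the right-hand side is nonpositive, $\dot E_g + D_g \leq 0$ for all $d\geq 2$, and only the lower bound on $D_g$ in terms of $E_g$ must be dimension-aware.

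The next step is to establish the convolved Nash-type inequality
\be\label{e.proposal_nash}
	E_g^{1+\frac{2}{d}} \lesssim D_g,
\ee
following the classical Fourier proof but applied to $\phi*g$ in place of a generic $L^2$ function. By Plancherel and the identity $\widehat{\phi*g} = \hat\phi\,\hat g$, one has $E_g = \int_{\R^d} |\hat\phi(\xi)|^2 |\hat g(\xi)|^2\, d\xi$ and $D_g \approx \int_{\R^d} |\xi|^2 |\hat\phi(\xi)|^2 |\hat g(\xi)|^2\, d\xi$. Since $\|\phi\|_1 = \|g\|_1 = 1$ forces $|\hat\phi|, |\hat g| \leq 1$ pointwise, splitting at frequency scale $r>0$ yields
\[
	E_g \lesssim r^d + r^{-2} D_g,
\]
where the first term bounds the low-frequency part (estimating both transforms by $1$) and the second bounds the high-frequency part (using $1 \leq |\xi|^2/r^2$). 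Balancing the two terms at $r^{d+2} \sim D_g$ gives $E_g \lesssim D_g^{d/(d+2)}$, which is \eqref{e.proposal_nash}. Crucially, the conservation $\|g\|_1 \equiv 1$ is what allows one to drop the usual $\|g\|_1^{4/d}$ factor that appears in Nash.

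Combining the dissipation estimate with \eqref{e.proposal_nash} gives, for some $C>0$, $\dot E_g + C^{-1} E_g^{1+2/d} \leq 0$. Setting $y(t) = E_g(t)$, this rewrites as $(y^{-2/d})' \geq 2/(Cd)$; integrating from $0$ to $t$ yields
\[
	E_g(t)^{-2/d} \geq E_g(0)^{-2/d} + \frac{2 t}{C d},
\]
and rearranging produces $E_g(t) \lesssim (t + E_g(0)^{-2/d})^{-d/2}$, which is exactly the claim.

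The only real obstacle, modest as it is, lies in \eqref{e.proposal_nash}: although the proof mirrors the classical Fourier argument for Nash's inequality, one must check that the extra factor $|\hat\phi|^2$ inside the integrand does no harm, and this is exactly where the uniform pointwise bound $|\hat\phi| \leq \|\phi\|_1 = 1$ is used (no refined decay of $\hat\phi$ is required). Beyond this, everything else reduces to the dimension-free identity for $\dot E_g$ already derived in \Cref{s.E_g_decay} and a one-line ODE integration.
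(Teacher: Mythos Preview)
Your proof is correct and follows essentially the same approach as the paper: the paper invokes the convolved Nash inequality (\Cref{l.Nash}), proved by the identical Fourier-splitting argument you outline, and then plugs it into the dimension-free identity \eqref{e.differential_inequality} to obtain and integrate the same ODE inequality $\dot E_g \lesssim -E_g^{1+2/d}$. Your observation that the derivation of \eqref{e.differential_inequality} in \Cref{s.E_g_decay} is dimension-independent is exactly what the paper relies on as well.
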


We note that, in the course of establishing \Cref{t.hd_moments} from \Cref{p.hd_decay}, we obtain a similar bound on $M_g$.

\subsubsection{Moment bounds}

We show how to conclude bounds on the moments of $g$ using \Cref{p.hd_decay}.  The main difficulty is in establishing the upper bounds on the moments as all other conclusions in \Cref{t.hd_moments} are either obtained along the way or a simple consequence of \Cref{p.hd_decay} and the moment upper bound.

We establish these upper bounds through the construction of a supersolution.  In the one dimensional case, this was made up of a solution to the heat equation with an exponential integrating factor depending on $E_g$.  
%
%
Trying to apply this directly here yields an issue in the 2d case: $\int_1^t E_g(s) ds$ grows logarithmically in $t$ (it is bounded if $d>2$).  As such, a simple proof mirroring the 1d proof closely can be established in dimensions $d\geq 3$ but will not be sharp when $d=2$.  Thus, we mainly focus below on the case $d=2$.  The key step here is to obtain and use a lower bound on the $R*g$ term when $|x|^2/t = \mathcal{O}(1)$.

\begin{proof}[Proof of \Cref{t.hd_moments}]
We begin with the proof of the upper bound on the moments in \Cref{t.hd_moments}.  We claim that there exists $A>0$ such that for all $t\geq 0$,
\be\label{e.hd_gaussian_bound}
	g(t,x)
		\leq \frac{A}{(t+1)^\frac{d}{2}} e^{- \frac{x^2}{A(t+1)}}.
\ee
Before establishing this, we note that the proof of the upper bound follows immediately via a direct computation using~\eqref{e.hd_gaussian_bound} (indeed, this is, up to a time change, equivalent to the fact that the $p$th moment of a Brownian motion is $\mathcal{O}( t^{p/2})$).


We now establish~\eqref{e.hd_gaussian_bound} via the construction of a supersolution.  Let
\[\begin{aligned}
	&\overline g(t,x) = \frac{A}{t+1} e^{- \frac{x^2}{2A(t+1)}}
		\qquad\qquad&\text{ if } d = 2,\\
	&\overline g(t,x) = \frac{A}{(t+1)^{d/2}} e^{\int_0^t E_g(s) \, ds - \frac{x^2}{2(t+1)}}
		\qquad &\text{ if } d\geq 3.
\end{aligned}
\]
Again, note that the choice of $\overline g$ for $d\geq 3$ would yield an extra logarithmic factor were we to use it in the case $d=2$ as $E_g(s) \approx (s+1)^{-1}$ and, hence, would not yield the sharp asymptotics.   In fact, that $\overline g$ is a supersolution when $d\geq 3$ is clear as it is simply a solution to the heat equation along with an integrating factor.  Thus, we focus our efforts on the case $d=2$.

Up to increasing $A$, we have that $\overline g > g$ at $t=0$ since $g_0$ is compactly supported and bounded.  We show that $\overline g \geq g$ on $(0,\infty)\times \R^2$ by contradiction, taking $t_0>0$ to be the first time that $\overline g$ and $g$ ``touch.''   Let $x_0$ be the point at which they touch.  It follows that, at $(t_0,x_0)$,
\be\label{e.c1081}
	\partial_t \overline g - \frac{1}{2} \Delta \overline g
		\leq \partial_t g - \frac{1}{2} \Delta g.
\ee
Our goal is to use~\eqref{e.c1081} to obtain a contradiction.

We claim that, at $(t_0,x_0)$,
\be\label{e.c1083}
	\partial_t \overline g - \frac{1}{2} \Delta \overline g - \overline g\left(E_g - R*g\right) > 0.
\ee
Postponing the proof of~\eqref{e.c1083} momentarily, we show how to conclude using it.  Indeed, at $(t_0,x_0)$, we have, from~\eqref{e.c1081},~\eqref{e.main}, and then~\eqref{e.c1083},
\[
	\partial_t \overline g - \frac{1}{2} \Delta \overline g
		\leq \partial_t g - \frac{1}{2} \Delta g
		= g \left( E_g - R*g\right)
		= \overline g \left( E_g - R*g\right)
		< \partial_t \overline g - \frac{1}{2} \Delta \overline g.
\]
This is clearly a contradicton.  Hence, \eqref{e.hd_gaussian_bound} follows from~\eqref{e.c1083}, which we prove now.

With arguments reminiscent of those in \Cref{s.R_continuous} (see, e.g., the proof of \Cref{l.small_time}), it is easy to check that, up to further increasing $A$, we may assume that $t_0 >1$.  Next, using arguments exactly as in \Cref{l.maxima_guw} (cf.~\eqref{e.c352}), there exists $C_R>0$, depending only on $R$, such that
\be\label{e.c1082}
	R*g(t_0,x_0)
		\geq \frac{A}{C_R(t_0+1)} e^{-\frac{2 x_0^2}{A(t_0+1)}}.
\ee
The above relies on the fact that $g(t_0,\cdot) \leq \overline g(t_0,\cdot) \leq (1+t_0)^{-1}$ and that
\[
	g(t_0,x_0) = \overline g(t_0,x_0)
		= \frac{A}{t_0+1} e^{- \frac{x_0^2}{A(t_0+1)}},
\]
which hold by contradictory assumption.

Notice that, for any $(t,x)$,
\[
	\partial_t \overline g
		- \frac{1}{2} \Delta \overline g
		= \overline g \left(
			\frac{x^2}{2A(t+1)^2} \left(1 - \frac{1}{A}\right)
			+ \frac{1}{t+1} \left(-1+\frac{1}{A} \right)
			\right).
\]
Hence, using \Cref{p.hd_decay} and~\eqref{e.c1082}, we find, at $(t_0,x_0)$,
\be\label{e.c765}
\begin{split}
	&\partial_t \overline g
		- \frac{1}{2} \Delta \overline g
		- \overline g \left(E_g - R*g\right)\\
		&\geq \overline g \left(
			\frac{x^2}{2A(t_0+1)^2} \left(1 - \frac{1}{A}\right)
			+ \frac{1}{t_0+1} \left(-1 + \frac{1}{A} \right)
			- \frac{C}{(t_0+1)}
			+ \frac{A e^{-\frac{2 x_0^2}{A(t_0+1)}}}{C_R(t_0+1)}
			\right).
\end{split}
\ee
where $C$ is the implied constant in \Cref{p.hd_decay}.

Increasing $A$ if necessary, we have
\be\label{e.c766}
	A
		> \max\{2, C_R(C+1)e^{8(C+1)}\}.
\ee
We consider first the case when $|x_0|^2 \geq 4(C+1)A (t_0+1)$.  Then~\eqref{e.c765} becomes
\[
	\partial_t \overline g
		- \frac{1}{2} \Delta \overline g
		- \overline g \left(E_g - \overline g\right)
		> \overline g \left(
			\frac{2(C+1)}{t_0+1}
			- \frac{1}{t_0+1}
			- \frac{C}{(t_0+1)}\right)
		\geq 0.
\]
On the other hand, if $|x_0|^2 \leq 4 (C+1) A (t_0+1)$, then~\eqref{e.c765} becomes
\[
	\partial_t \overline g
		- \frac{1}{2} \Delta \overline g
		- \overline g \left(E_g - \overline g\right)
		> \overline g \left(
			- \frac{1}{t_0+1}
			- \frac{C}{(t_0+1)}
			+ \frac{A}{C_R(t_0+1)} e^{- 8 (C+1)}\right)
		\geq 0.
\]
where the last line follows from~\eqref{e.c766}.  Thus,~\eqref{e.c1083} follows from the two cases above.  This finishes the proof of~\eqref{e.hd_gaussian_bound}.


The proof of the upper bound on $M_g$ follows from the fact that $g \leq \overline g$, while the upper bound on $E_g$ is the content of \Cref{p.hd_decay}.  The proof of the lower bounds on the moments and on the $L^2$- and $L^\infty$-norms of $g(t,\cdot)$ follows exactly as in the proof of \Cref{t.decay} using the bounds established above.  As such, we omit the details.  The proof is, thus, finished.
\end{proof}

\subsubsection{The upper bound on $g$}\label{s:hd_decay}

We now establish the key upper bound on $E_g$ on which the previous section depends.  We use classical methods based on the Nash inequality to order to establish the $t^{-d/2}$ decay of $E_g$; however, the Nash inequality must be slightly adapted to our macroscopic quantities $E_g$ and $D_g$.  We state this updated Nash inequality here, its proof is left until after the proof of \Cref{p.hd_decay}.

\begin{lemma}\label{l.Nash}
	Let $h\in H^1(\R^d) \cap L^1(\R^d)$ and suppose that $R$ satisfies~\eqref{e.R} or $R = \delta$.  Then
	\[
		E_h^{1 + 2/d}
			\lesssim \|h\|_{1}^{4/d} D_h.
	\]  
\end{lemma}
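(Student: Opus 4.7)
The plan is to reduce the claim to the classical Nash inequality
\[
	\|f\|_2^{2+4/d}
		\lesssim \|f\|_1^{4/d} \|\nabla f\|_2^2
		\qquad \text{for all } f \in H^1(\R^d) \cap L^1(\R^d),
\]
by applying it to $f = \phi * h$ rather than to $h$ itself. The key observation is that the quantities $E_h$ and $D_h$ have already been identified in~\eqref{e.macro_notation} as the squared $L^2$- and $\dot H^1$-norms of the convolution $\phi * h$, so Nash will transfer to them essentially for free.

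When $R = \delta$, the identities $E_h = \|h\|_2^2$ and $D_h = \|\nabla h\|_2^2$ reduce the claim to the classical Nash inequality applied directly to $h$. When $R$ satisfies~\eqref{e.R}, I would set $f = \phi * h$; then~\eqref{e.macro_notation} gives $E_h = \|f\|_2^2$ and $D_h = \|\nabla f\|_2^2$, and Young's inequality for convolutions combined with $\int \phi = 1$ yields
\[
	\|f\|_1 \leq \|\phi\|_1 \|h\|_1 = \|h\|_1.
\]
Substituting these into the classical Nash inequality applied to $f$,
\[
	E_h^{1+2/d}
		= \|f\|_2^{2+4/d}
		\lesssim \|f\|_1^{4/d} \|\nabla f\|_2^2
		\leq \|h\|_1^{4/d} D_h,
\]
which is exactly the desired estimate.

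There is no real obstacle here: once one notices the factorization $R = \phi * \phi$ makes $E_h$ and $D_h$ into genuine $L^2$-norms of a single auxiliary function, the problem collapses to a standard application of the $L^1$-to-$L^2$ Nash inequality. The only detail to check is that $f = \phi * h$ belongs to $H^1 \cap L^1$ so that Nash is applicable, but this is immediate from $h \in H^1 \cap L^1$, $\phi \in L^1$, and Young's inequality (with $\nabla f = \phi * \nabla h$). If desired, one could alternatively reprove Nash directly at the level of $\phi * h$ via the Fourier-space splitting $\int_{|\xi|\leq r} + \int_{|\xi|>r}$ using $|\widehat{\phi * h}(\xi)| \leq \|h\|_1 |\hat \phi(\xi)| \leq \|h\|_1$, but this adds nothing beyond the black-box route.
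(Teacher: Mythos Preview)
Your proof is correct and slightly more streamlined than the paper's. The paper does not invoke the classical Nash inequality as a black box; instead it carries out the standard Fourier-space splitting argument directly on $E_h$, writing
\[
	E_h
		= \int |\hat h|^2 \hat R\, d\xi
		\leq \int_{B_L} |\hat h|^2 \hat R\, d\xi
			+ \int_{B_L^c} \frac{|\xi|^2}{L^2} |\hat h|^2 \hat R\, d\xi
		\lesssim L^d \|h\|_1^2 + \frac{1}{L^2} D_h,
\]
and then optimizing in $L$. The key observation there is that $\hat R = |\hat\phi|^2 \geq 0$, which is what makes the weighted splitting go through. Your route sidesteps this by using the identification $E_h = \|\phi*h\|_2^2$ and $D_h = \|\nabla(\phi*h)\|_2^2$ from~\eqref{e.macro_notation} together with $\|\phi*h\|_1 \leq \|h\|_1$, so that classical Nash applies to $f = \phi*h$ verbatim. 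This is exactly the Fourier alternative you mention at the end, just packaged differently: the paper effectively reproves Nash for the auxiliary function, while you cite it. Both arguments are equivalent in content; yours is shorter, while the paper's makes the positive-definiteness of $R$ (natural since $R$ is a covariance function) the visible ingredient.
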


\begin{proof}[Proof of \Cref{p.hd_decay}]
Applying the convolved Nash inequality from \Cref{l.Nash}, we find that 
\[
	E_g^{1+2/d}
		\lesssim \|g\|_{1}^{4/d} D_g
		= D_g.
\]
Using this inequality in~\eqref{e.differential_inequality}, we have
%
\be\label{e.c762}
	\dot E_g
		= - D_g - 2\int g(R*g- E_g)^2\dx \leq -D_g
		\lesssim - E_g^{1 + 2/d}.
\ee
Integrating this in time, we find
\be\label{e.E_small}
	E_g(t)
		\lesssim \left( t + E_g(0)^{-2/d}\right)^{-d/2},
\ee
which concludes the proof.
\end{proof}

We now prove \Cref{l.Nash}.  The proof given is almost exactly as in the classical case; however, the new $\hat R$ terms in the Fourier transform must be addressed.

\begin{proof}[Proof of \Cref{l.Nash}]
	We note that the case $R=\delta$ is the standard Nash inequality; hence we omit its proof and focus only on the case when $R$ is continuous and satisfies~\eqref{e.R}.  We use the Fourier transform: for any function $h$, we denote its Fourier transform by $\hat h$.  We begin with Plancherel's identity
	\[
		E_h
			= \langle h, R*h \rangle
			= \int \hat h \overline{\widehat{R*h}}\, d\xi
			= \int |\hat h|^2 \overline{\hat R }\, d\xi.
	\]
	Notice that, due to the form of $R$ in terms of $\phi$ (see~\eqref{e.R}), we have $\hat{R}=\hat{\phi}^2$ and since $\phi$ is assumed to be even, it actually implies $\hat{R}=|\hat{\phi}|^2$. 
	In other words, $\hat R$ is real valued and non-negative (this is not surprising, because $R$ comes from the covariance function of a Gaussian process so it is positive definite).
	
	Hence, for $L$ to be chosen, we have
	\[\begin{split}
		E_h
			&\leq \int_{B_L} |\hat h|^2 \hat R\, d\xi
				+ \int_{B_L^c} \frac{|\xi|^2}{L^2} |\hat h|^2 \hat R\, d\xi
			\lesssim L^d \|\hat h\|_{\infty}^2 \|\hat R\|_{\infty} + \int_{B_L^c} \frac{|\xi|^2}{L^2} |\hat h|^2 \hat R\, d\xi
			\\&\leq L^d \|h\|_{1}^2 + \frac{1}{L^2} \langle \nabla h, R*\nabla h\rangle
			= L^d \|h\|_1^2 + \frac{1}{L^2} D_h.
	\end{split}\]
	The proof is then finished by choosing $L^{d+2} = D_h / \|h\|_{1}^2$.
\end{proof}

\subsection{Gaussian and non-Gaussian self-similar dynamics}

The goal of this section is to prove Theorem~\ref{t.hd_steady}. Recall that we assumed the convolution kernel $R=\delta$. In order to attack this problem, we begin with a few transformations of the function $g$.   We use self-similar variables here; that is, 
we define
\be\label{e:change_variables}
	G(\tau,y)
		= e^{\frac{d}{2}\tau} g\left(e^\tau, e^{\tau/2} y\right)
\ee
and find that
\be\label{e:G}
	G_\tau
		= \frac{d}{2} G
			+ \frac12\Delta G + G e^{-\frac{d-2}{2}\tau}
				\left(
					\|G\|_2^2
					- G
				\right)
			+ \frac{y}{2}\cdot\nabla G.
\ee
First, we note that, in the above equation, the linear operator $\frac12\Delta+\frac{y}{2}\cdot\nabla+\frac{d}{2}$ is actually the adjoint of $\frac12\Delta-\frac{y}{2}\cdot\nabla$, which is the generator of an 
Ornstein–Uhlenbeck process with the standard Gaussian invariant density $(2\pi)^{-d/2}\exp(-|y|^2/2)$. Thus, without the nonlinear term, the above equation is actually the Fokker-Planck equation for the Ornstein-Uhlenbeck process which converges to its invariant density. We now see the reason for the different behavior in $d=2$: when $d\geq3$, the nonlinear terms are lower order terms decaying exponentially in $\tau$, hence we get a Gaussian behavior as expected, while when $d=2$, the nonlinear terms are $O(1)$ since, in these variables, \Cref{t.hd_moments} yields
\be\label{e:decay_ss}
	\|G\|^2_2,
		\|G\|_\infty
		\approx 1.
\ee

\subsubsection{Decay to a Gaussian in higher dimensions $d\geq 3$}

We now use the above change of variables to obtain the convergence to a Gaussian; that is, we prove \Cref{t.hd_steady} (i).  First, we make a few reductions.  Up to shifting in time, we may assume that, for $\tau_0 \geq 0$, $g(e^{\tau_0}, x) = g_0(x)$, which yields
\be\label{e.c771}
	G(\tau_0, y)
		= g_0( e^{\tau_0/2} y)
		\leq e^{\frac{d\tau_0}{2}}A e^{- \frac{e^{\tau_0} y^2}{B}}
				\qquad\text{ for all } y \in \R^d.
\ee
Hence, up to increasing $\tau_0$ and increasing $A$, we may assume that
\[
	G(\tau_0,y) \leq A e^{- \frac{y^2}{2}}
		\qquad\text{ for all } y \in \R^d.
\]

Summing up the previous reductions, we assume that $G$ solves
\be\label{e.mainG}
	\begin{cases}
		G_\tau
			=  \frac{1}{2}\Delta G
			+ \frac{y}{2}\cdot\nabla G
			+ \frac{d}{2} G
			+ e^{-\frac{d-2}{2}\tau} G
				\left(
					\|G\|_2^2
					- G
				\right)
				\qquad &\text{ in } (\tau_0,\infty)\times \R^d,\\
		G = G_0 \leq A e^{-\frac{y^2}{2}}
				\qquad &\text{ on } \{\tau_0\}\times \R^d.
	\end{cases}
\ee

The above is not self-adjoint and, thus, not amenable to spectral analysis.  Hence, we define a new function
\[
	W(\tau,y)
		= \frac{G(\tau,y)}{\psi_0(y)},
		\qquad\text{where } \psi_0(y) = \frac{1}{Z} e^{-\frac{y^2}{4}}
\]
with $Z = (2\pi)^{d/4}$ is a normalization constant chosen so that $\|\psi_0\|_2 = 1$.  It is clear that $\psi_0^2$ is the standard Gaussian density. We note that, due to the bound on $G_0$ in~\eqref{e.mainG}, we have that $W_0 := G_0 / \psi_0 \in L^2$.

We notice that
\be\label{e:W}
	W_\tau
		= \frac{1}{2} \Delta W + W\left(\frac{d}{4} - \frac{|y|^2}{8}\right)
			+ W e^{-\frac{d-2}{2}\tau}\left( \|G\|_2^2 - G\right)
\ee
For simplicity, we write the linear operator
\[
	M = - \frac{1}{2} \Delta + \left(\frac{|y|^2}{8} - \frac{d}{4}\right).
\]

We understand the behavior of $G$ through the properties of $M$.  First, note that $M$ is an unbounded, symmetric operator on $L^2$.  For each multi-index $\alpha \in \N_0^d$, let
\[
	\psi_\alpha
		= \psi_0^{-1} \partial_{x_1}^{\alpha_1}\cdots \partial_{x_n}^{\alpha_n} \psi_0^2
		= H_\alpha(y) \psi_0(y),
\]
where $H_\alpha$ is the $\alpha$ Hermite polynomial that is implicitly defined above.  It is easy to check that
\[
	M \psi_\alpha
		= \frac{|\alpha|}{2} \psi_\alpha.
\]
In addition, it is well-known\footnote{This is usually stated in the following way: the set of (rescaled) Hermite polynomials $H_\alpha$ form a basis of the weighted space $L^2(\psi_0^2)$.  This is, however, equivalent to our statement.} that $\psi_\alpha$ form an orthogonal basis of $L^2$.  In particular, we conclude that if $\langle \psi, \psi_0 \rangle = 0$, then
\be\label{e.772}
	\langle M \psi, \psi\rangle
		\geq \frac{1}{2} \|\psi\|_2^2.
\ee
We are now in a position to complete the proof of \Cref{t.hd_steady} (i).


\begin{proof}[Proof of \Cref{t.hd_steady} (i)]
We write
\[
	W_\perp(\tau,y)
		= W(\tau,y) - \psi_0(y).
\]
Notice that 
\[
	\langle W_\perp(\tau), \psi_0\rangle
		= \langle W(\tau), \psi_0\rangle - \langle \psi_0,\psi_0\rangle
		= \int G(\tau,y) \dy - 1
		= 0.
\]
The proof proceeds by showing that $W_\perp \to 0$ using this orthogonality.

%
%
Multiplying~\eqref{e:W} by $W_\perp$, integrating, and noticing that $\langle W, W_\perp\rangle = \|W_\perp\|_2^2$ by orthogonality, yields
\[
	\frac{1}{2}
		\partial_\tau \|W_\perp\|_2^2
		= - \langle MW , W_\perp\rangle
			+ e^{-\frac{d-2}{2} \tau } \langle W \left(\|G\|_2^2 -G\right), W_\perp\rangle.
\]
Next, using that $MW = M\psi_0 + M W_\perp = MW_\perp$ and~\eqref{e.772}, yields
\be\label{e.c773}
\begin{split}
	\frac{1}{2}
		\partial_\tau \|W_\perp\|_2^2
		&= - \langle MW_\perp , W_\perp\rangle
			+ e^{-\frac{d-2}{2} \tau } \langle W \left(\|G\|_2^2 -G\right), W_\perp\rangle\\
		&\leq - \frac{1}{2} \|W_\perp\|_2^2
			+ e^{-\frac{d-2}{2} \tau } \langle W \left(\|G\|_2^2 -G\right), W_\perp\rangle.
\end{split}
\ee

Using the bounds in \Cref{t.hd_moments}, we find
\[
	\langle W(\|G\|_2^2 - G), W_\perp\rangle
		\lesssim \|W\|_2 \|W_\perp\|_2
		\leq \left(1 + \|W_\perp\|_2\right) \|W_\perp\|_2.
\]
Hence,~\eqref{e.c773} becomes, for some $C>0$,
\[
	\partial_\tau \|W_\perp\|_2^2
	 	+ \left(1 - C e^{-\frac{d-2}{2}\tau}\right) \|W_\perp\|_2^2
		\lesssim e^{-\frac{(d-2)}{2}\tau} \|W_\perp\|_2.
\]
Solving this differential inequality yields
\[
	\|W_\perp\|_2
		\lesssim
		R_d(\tau)
\]
where we define 
\[
	R_d(\tau)
		= \begin{cases}
				(\tau+1) e^{- \tau/2}
					\qquad &\text{ if } d = 3,\\
				e^{-\tau/2}
					\qquad &\text{ if } d\geq 4.
			\end{cases}
\]

Returning to $G$, we find
\[
	\int e^\frac{y^2}{2} \left( G(\tau,y) - \psi_0^2\right)^2 dy \lesssim R_d(\tau)^2.
\]
Using parabolic regularity theory, it is standard to conclude, for any $\sigma < 1/4$,
\[
	\|e^{\sigma y^2} \left(G(\tau) - \psi_0^2\right)\|_\infty
		\lesssim R_d(\tau),
\]
which, after returning to the original variables, concludes the proof.
%
%
%
%
\end{proof}

\subsubsection{A non-Gaussian steady state in two dimensions}

In the following, we use $R$ as the variable for the radius of a ball, which is not to be confused with the convolution kernel (the kernel is fixed to be $\delta$ in this section). We construct a steady solution of the self-similar problem~\eqref{e.mainG} when $d=2$ that is not the Gaussian $\psi_0^2$ from the previous subsection.  The construction occurs in multiple steps.  First, we replace the $\|G\|_2^2$ in~\eqref{e:G} with a constant term $E$ to ``localize'' the equation.  For any $E>0$ and $R\gtrsim 1/\sqrt E$, we construct radial (rotationally symmetric) steady solutions $G_R$ of the localized equation on $B_R$ with Dirichlet boundary conditions on $\partial B_R$.  Second, we show that, choosing $E=\mathcal{E}_R$ well guarantees that $\int_{B_R} G_R(y) dy = 1$.  Finally, we show that, in the limit $R\to\infty$, $\mathcal{E}_R - \|G_R\|_2^2\to 0$ and that $G_R$ converges to a steady solution $G$ of~\eqref{e:G} on $\R^2$.

\subsubsection*{Constructing a steady solution of the localized problem on a ball}

\begin{lemma}\label{l.underline_G}
Fix $E>0$ and $R \geq \max\{4,20/\sqrt E\}$.  There exists a radial function $\underline G_{E,R}: B_R \to [0,E/2]$ such that
\be\label{e.G_bounded_sub}
	\begin{cases}
		\frac{1}{2}\Delta \underline G_{E,R} + \frac{y}{2} \cdot \nabla \underline G_{E,R} + \underline G_{E,R}(1 + E - \underline G_{E,R})
			\geq 0
			\qquad &\text{ in } B_R,\\
		\underline G_{E,R} = 0
			\qquad &\text{ on } \partial B_R,
	\end{cases}
\ee
and
\[
	\int_{B_R} \underline G_{E,R} dy
		\gtrsim E.
\]
\end{lemma}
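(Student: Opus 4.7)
The plan is to take $\underline G_{E,R}$ to be a constant multiple of the principal Dirichlet eigenfunction of $L := -\tfrac12 \Delta - \tfrac{y}{2}\cdot\nabla$ on $B_R$. Let $\phi_R$ denote this eigenfunction with eigenvalue $\mu_0(R)$, normalized by $\|\phi_R\|_\infty=1$. By Krein--Rutman applied to the resolvent of $L$ with Dirichlet data, $\phi_R$ exists, is simple, smooth, and strictly positive in $B_R$; by the rotation invariance of $L$ and $B_R$ together with simplicity it is radial; and a short Sturm argument on the radial profile (at any interior critical point $r_1\in(0,R)$ the radial equation forces $\phi_R''(r_1)=-2\mu_0(R)\phi_R(r_1)<0$, ruling out interior minima) shows $\phi_R$ is strictly radially decreasing, so $\phi_R(0)=1$. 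Setting $\underline G_{E,R}:=\tfrac{E}{2}\phi_R$ then gives $0\leq \underline G_{E,R}\leq E/2$, the Dirichlet condition, and the pointwise identity
\[
    \tfrac12\Delta\underline G_{E,R}+\tfrac{y}{2}\cdot\nabla\underline G_{E,R}+\underline G_{E,R}(1+E-\underline G_{E,R})
        = \underline G_{E,R}\bigl(1+E-\mu_0(R)-\underline G_{E,R}\bigr),
\]
which is nonnegative as soon as $\mu_0(R)\leq 1+E/2$.

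The heart of the matter is thus the eigenvalue bound $\mu_0(R)-1\leq E/2$ whenever $R\geq \max\{4,20/\sqrt{E}\}$. An integration by parts verifies that $L$ is self-adjoint in $L^2(B_R, e^{|y|^2/2}dy)$ with $\langle L\phi,\phi\rangle = \tfrac12\int_{B_R}|\nabla\phi|^2 e^{|y|^2/2}dy$, yielding the Rayleigh--Ritz characterization
\[
    \mu_0(R) = \inf_{\phi\in H^1_0(B_R)\setminus\{0\}}
        \frac{\tfrac12\int_{B_R}|\nabla\phi|^2 e^{|y|^2/2}\,dy}
             {\int_{B_R}\phi^2 e^{|y|^2/2}\,dy}.
\]
I would test this against $\phi^*(y):=e^{-|y|^2/2}-e^{-R^2/2}$, modelled on the ground state $e^{-|y|^2/2}$ of $L$ on $\R^2$ (which achieves the unconstrained minimum $\mu_0(\infty)=1$) and corrected by a constant so as to vanish on $\partial B_R$. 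A direct computation in polar coordinates evaluates both integrals in closed form and gives
\[
    \mu_0(R)-1
        \leq \frac{(R^2/2-1)e^{-R^2/2}+e^{-R^2}}{1-R^2 e^{-R^2/2}-e^{-R^2}}
        \leq C\,R^2 e^{-R^2/2}
\]
for $R\geq 4$, with an absolute constant $C$. The inequality $CR^2 e^{-R^2/2}\leq E/2$ for $R\geq\max\{4,20/\sqrt E\}$ is then verified in two regimes: when $R=20/\sqrt{E}$ dominates it reduces to $E^2 e^{200/E}\geq 800C$, whose left side is minimized over $E>0$ at $E=100$ with value $10^4 e^2$; when $R\geq 4$ dominates (so $E\geq 25$) one has $CR^2 e^{-R^2/2}\leq 16Ce^{-8}\ll 12.5\leq E/2$.

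Finally, $\int_{B_R}\underline G_{E,R}\,dy\gtrsim E$ reduces to $\int_{B_R}\phi_R\,dy\gtrsim 1$ uniformly in $E$ and $R$. The bound above shows $\mu_0(R)\leq 1+16Ce^{-8}<2$ universally for $R\geq 4$, so $\phi_R$ is a positive solution of the uniformly elliptic equation $L\phi_R=\mu_0(R)\phi_R$ on $B_1\subset B_R$ with drift $|y/2|\leq 1/2$ and zero-order term $\mu_0(R)$ bounded by absolute constants independent of $E$ and $R$. The elliptic Harnack inequality on $B_{1/2}$ then gives $\inf_{B_{1/2}}\phi_R\geq c\,\phi_R(0)=c$ for a universal $c>0$, whence $\int_{B_R}\phi_R\geq c|B_{1/2}|\gtrsim 1$, closing the argument. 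The main obstacle in the plan is the Rayleigh quotient step: producing the exponentially small bound on $\mu_0(R)-1$ via the explicit trial function and reconciling it with the explicit thresholds ``$4$'' and ``$20/\sqrt E$'' of the hypothesis demands careful bookkeeping, although the individual computations are routine.
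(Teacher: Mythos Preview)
Your proof is correct and takes a genuinely different route from the paper's.  The paper builds $\underline G_{E,R}$ by hand as $\tfrac{E}{2}e^{-|y|^2/2}\phi(|y|)$ with an explicit $C^2$ cutoff $\phi$ satisfying prescribed size and curvature bounds on $[0,R/3]$, $[R/3,2R/3]$, $[2R/3,R]$, and then checks the differential inequality region by region; the constraints $R\geq 4$ and $R\geq 20/\sqrt{E}$ are what make the arithmetic close in the middle and outer regions.  You instead take $\underline G_{E,R}=\tfrac{E}{2}\phi_R$ with $\phi_R$ the principal Dirichlet eigenfunction of $-\tfrac12\Delta-\tfrac{y}{2}\cdot\nabla$ on $B_R$, reduce the subsolution condition to the single scalar inequality $\mu_0(R)\leq 1+E/2$, and establish the latter by an explicit Rayleigh--Ritz computation with the trial function $e^{-|y|^2/2}-e^{-R^2/2}$, yielding the exponentially small gap $\mu_0(R)-1\lesssim R^2e^{-R^2/2}$.

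Your approach is more structural: it explains why the thresholds on $R$ suffice (they make the eigenvalue gap beat $E/2$), and the subsolution is as canonical as possible.  The cost is the overhead of Krein--Rutman, the Sturm argument for monotonicity, and the interior Harnack inequality for the mass lower bound; the paper's argument is entirely self-contained calculus.  Both arguments produce the same object up to harmless modification: the paper's Gaussian-times-cutoff is essentially a hand-built approximation to your eigenfunction.  One minor point of bookkeeping: in your case split you check $E^2e^{200/E}\geq 800C$ by locating the global minimum at $E=100$, but the regime ``$20/\sqrt E$ dominates'' is $E\leq 25$; this is harmless since the global minimum already clears the bar, but it would be cleaner to note that $E^2e^{200/E}$ is decreasing on $(0,25]$ and evaluate at $E=25$.
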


\begin{proof}
Let $\phi:[0,R]\to \R$ be a $C^2$ cut-off function such that
\be\label{e.cutoff}
	\begin{split}
		&(i)
			\ \ 0 \leq \phi \leq 1,\ \phi'(R) = \phi(R) = 0,
		\qquad 
		(ii) \ \ 
			\phi \equiv 1 \text{ on } \left[0, \frac{R}{3}\right],\\
		&(iii)\ \
			-\frac{20}{R^2} \leq \phi'',\
				- \frac{10}{R} \leq \phi',
				\text{ and } \phi(y) \geq 1/2
				\text{ on } \left[\frac{R}{3}, \frac{2R}{3}\right], \text{ and}\\
		&(iv)\ \
			\phi'' \geq \frac{2}{R^2},\
			\phi'(r) \geq - \frac{10}{R^2}(R-r)
				\text{ on } \left[\frac{2R}{3},R\right].
	\end{split}
\ee
Let 
\be\label{e.underline_G}
	\underline G_{E,R}(y)
		= \frac{E}{2} e^{-\frac{y^2}{2}} \phi(|y|).
\ee
Notice that, by~\eqref{e.cutoff}.(i), $\underline G_{E,R} \leq E/2$.  Using polar coordinates, we find 
\be\label{e.c8117}
\begin{split}
	-\frac{1}{2}\Delta \underline G_{E,R} - &\frac{y}{2} \cdot \nabla \underline G_{E,R} - \underline G_{E,R}(1 + E - \underline G_{E,R})\\
		&= -\frac{E e^{-\frac{y^2}{2}}}{2} \left(
			\frac{1}{2} \phi''
			+ \frac{1}{2r} \phi'
			+ \left(E - \frac{E}{2} e^{-\frac{r^2}{2}}\phi\right) \phi
		\right).
\end{split}
\ee

It is clear that the last line is non-positive when $r \in [0,R/3]$ since $\phi \equiv 1$ on this set.  When $r \in [R/3,2R/3]$, we deduce, from~\eqref{e.cutoff}.(iii), that
\[
	-\left(\frac{1}{2} \phi''
			+ \frac{1}{2r} \phi'
			+ \left(E - \frac{E}{2} e^{-\frac{r^2}{2}}\phi\right) \phi\right)
		\leq \frac{10}{R^2} + \frac{15}{R^2} - \frac{E}{2}
		= \frac{E}{2R^2} \left( \frac{50}{E} - R^2\right).
\]
Since, $R \geq \max\{4, 20/\sqrt{E}\}$, this is non-positive.  Hence, the right hand side of~\eqref{e.c8117} is non-positive.

Finally consider the case when $r \in [2R/3,R]$.  First notice that, due to~\eqref{e.cutoff}.(i) and (iv), we have
\[
	\phi(y) \geq \frac{1}{R^2}(R-y)^2.
\]
Using this lower bound, as well as~\eqref{e.cutoff}.(iv) again, yields
\be\label{e.c8118}
	-\left(\frac{1}{2} \phi''
		+ \frac{1}{2r} \phi'
		+ \left(E - \frac{E}{2} e^{-\frac{r^2}{2}}\phi\right) \phi\right)
		\leq - \frac{1}{R^2}
			+ \frac{3}{4R} \frac{10}{R^2} (R-y)
			- \frac{E}{2} \frac{(R-y)^2}{R^2}.
\ee
We now use Young's inequality and then the fact that $R \geq 20/\sqrt E$ to find
\[
	\frac{3}{4R} \frac{10}{R^2} (R-y)
		\leq \frac{225}{8E R^4} + \frac{E}{2} \frac{(R-y)^2}{R^2}
		\leq \frac{225}{3200R^2} + \frac{E}{2} \frac{(R-y)^2}{R^2}.
\]
Plugging this into~\eqref{e.c8118} implies that the right hand side of~\eqref{e.c8117} is non-positive on $[2R/3,R]$.


Hence, in all cases, the right hand side of~\eqref{e.c8117} is non-positive, which implies that $\underline G_{E,R}$ is a subsolution; that is, it satisfies~\eqref{e.G_bounded_sub}, as claimed.  In addition, the lower bound on the integral of $\underline G_{E,R}$ is clear by~\eqref{e.cutoff} and~\eqref{e.underline_G}.
\end{proof}

We now use $\underline G_{E,R}$ to construct a radial solution to the local problem on $B_R$.
\begin{proposition}\label{p.solution_bounded}
	Suppose that $E>0$ and $R \geq \max\{4, 20/\sqrt E\}$.  There exists a radial function $G_{E,R}: B_R \to [0, 1+E)$ of
	\be\label{e.G_bounded}
	\begin{cases}
		0 = \frac{1}{2}\Delta G_{E,R} + \frac{y}{2} \cdot \nabla G_{E,R} + (1 + E - G_{E,R})G_{E,R}
			\qquad &\text{ in } B_R,\\
		G_{E,R} = 0
			\qquad &\text{ on } \partial B_R,
	\end{cases}
\ee
such that $\int_{B_R} G_{E,R} dy \gtrsim E$.  This is the unique nontrivial solution of~\eqref{e.G_bounded}.
\end{proposition}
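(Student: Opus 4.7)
The plan is twofold: existence will follow from a classical sub-/super-solution (monotone iteration) argument using $\underline G_{E,R}$ from \Cref{l.underline_G} as the subsolution and the constant $1+E$ as the supersolution, while uniqueness will follow from a Picone/Brezis--Oswald argument exploiting that $(1+E-G)G/G = 1+E-G$ is strictly decreasing in $G$ on $(0, 1+E)$.

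For existence, first note that $\overline G \equiv 1+E$ is a supersolution of \eqref{e.G_bounded} since its derivatives vanish and $(1+E-\overline G)\overline G = 0$; moreover $\underline G_{E,R} \leq E/2 < 1+E = \overline G$. Writing $\cL := \tfrac{1}{2}\Delta + \tfrac{y}{2}\cdot\nabla$, I would flow the parabolic equation $\partial_t G = \cL G + (1+E-G)G$ on $B_R$ with zero Dirichlet boundary condition and initial datum $\underline G_{E,R}$. Comparison with the stationary sub-/super-solution pair gives $\underline G_{E,R} \leq G(t,\cdot) \leq 1+E$ for all $t \geq 0$, and since $\underline G_{E,R}$ is a stationary subsolution, comparing $G(t+s,\cdot)$ with $G(t,\cdot)$ shows that $t \mapsto G(t,\cdot)$ is nondecreasing. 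By monotone convergence and parabolic regularity, $G(t,\cdot) \to G_{E,R}$ in $C^2_{\rm loc}(B_R)$ as $t\to\infty$, where $G_{E,R}$ solves \eqref{e.G_bounded}. Rotational invariance of the equation, the domain, and the initial datum forces $G_{E,R}$ to be radial (the unique parabolic solution inherits the symmetry of $\underline G_{E,R}$). The pointwise bound $G_{E,R} \geq \underline G_{E,R}$ yields $\int_{B_R} G_{E,R}\, dy \gtrsim E$, and applying the strong maximum principle to $1+E - G_{E,R}$, which satisfies $\cL(1+E - G_{E,R}) = G_{E,R}(1+E - G_{E,R})$, rules out equality with $1+E$ in $B_R$.

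For uniqueness, suppose $G_1, G_2$ are two nontrivial nonnegative solutions of \eqref{e.G_bounded}. The strong maximum principle gives $G_i > 0$ in $B_R$, and Hopf's lemma gives $\partial_\nu G_i < 0$ on $\partial B_R$, so each ratio $G_i/G_j$ extends positively and continuously to $\overline{B_R}$. Rewriting $\cL u = \tfrac{1}{2} e^{-|y|^2/2}\nabla \cdot (e^{|y|^2/2}\nabla u)$, testing the $G_1$-equation against $(G_2^2/G_1) e^{|y|^2/2}$, integrating by parts on $B_R$ (the boundary term, proportional to $G_2^2 \cdot \partial_\nu G_1 / G_1$, vanishes on $\partial B_R$ thanks to Hopf), and applying the Picone inequality $\nabla G_1 \cdot \nabla(G_2^2/G_1) \leq |\nabla G_2|^2$ yield
\begin{equation*}
\int_{B_R} (1+E-G_1)\, G_2^2\, e^{|y|^2/2}\, dy \leq \tfrac{1}{2}\int_{B_R} |\nabla G_2|^2\, e^{|y|^2/2}\, dy.
\end{equation*}
Testing the $G_2$-equation against $G_2\, e^{|y|^2/2}$ gives the matching identity with $(1+E - G_1)$ replaced by $(1+E - G_2)$ and with equality in place of $\leq$. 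Subtracting produces $\int (G_2 - G_1)\, G_2^2\, e^{|y|^2/2}\, dy \leq 0$; swapping the roles of $G_1, G_2$ and adding yields
\begin{equation*}
\int_{B_R} (G_1 - G_2)^2 (G_1 + G_2)\, e^{|y|^2/2}\, dy \leq 0,
\end{equation*}
which forces $G_1 \equiv G_2$. The main delicate point is justifying the Picone integration by parts, since $G_2^2/G_1$ is a priori singular where $G_1$ vanishes; the Hopf-lemma bound on $G_i/G_j$ up to $\partial B_R$ resolves this and makes all boundary terms vanish. Apart from this, the argument is routine.
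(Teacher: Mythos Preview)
Your existence argument is essentially identical to the paper's: both flow the parabolic problem forward from the subsolution $\underline G_{E,R}$ of \Cref{l.underline_G}, use comparison with the constant supersolution $1+E$ to get an upper barrier, exploit monotonicity in time to pass to a limit, and deduce radial symmetry from rotational invariance of the data. The paper is slightly more explicit about why the limit solves the stationary equation (it argues that $\|\partial_t H(t_n)\|_{L^2}\to 0$ along a subsequence by a mass argument), whereas you invoke ``monotone convergence and parabolic regularity''; this is fine since uniform $C^{2+\alpha}_{\rm parabolic}$ bounds together with pointwise monotone convergence force the time derivative to vanish in the limit.

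Your uniqueness argument is genuinely different. The paper uses a sliding/scaling argument: set $G_A = AG$, take $A_0 = \inf\{A\geq 1: G_A \geq H\}$, and use the strong maximum principle on the difference $G_{A_0}-H$ to rule out $A_0>1$, then symmetrize. You instead run the Brezis--Oswald/Picone argument, rewriting $\cL$ in divergence form with weight $e^{|y|^2/2}$, testing against $G_2^2/G_1$, and using that $u\mapsto (1+E-u)$ is strictly decreasing. Both are classical routes to uniqueness for sublinear problems; yours is arguably cleaner and more systematic (it immediately generalizes to any $f$ with $f(u)/u$ strictly decreasing), while the paper's sliding method stays closer to the maximum principle and avoids the integration-by-parts near $\partial B_R$. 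One small point: you restrict to \emph{nonnegative} nontrivial solutions, which is what your Hopf/Picone step needs; the proposition as stated says ``unique nontrivial solution'' without that qualifier, and the paper's sliding argument does not explicitly assume $H\geq 0$. For the application this distinction is immaterial, but if you want to match the stated claim you should note that any nontrivial solution is automatically positive (e.g.\ via the maximum principle applied on the set $\{H<0\}$, or by first running the sliding step $G\geq H$ which does not require positivity of $H$).
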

\begin{proof}
	Let $H$ be the solution of
	\be\label{e.H}
	\begin{cases}
		H_t = \frac{1}{2}\Delta H + \frac{y}{2} \cdot \nabla H + (1 + E - H)H
			\qquad &\text{ in } (0,\infty) \times B_R,\\
		H = \underline G_{E,R}
			\qquad &\text{ on } \{0\}\times B_R,\\
		H = 0
			\qquad &\text{ on } [0,\infty)\times \partial B_R,
	\end{cases}
	\ee
	where $\underline G_{E,R}$ is from \Cref{l.underline_G}.  The comparison principle immediately yields that $H \leq 1+E$.
	
	We claim that $H_t \geq 0$ for all $t>0$.  Since $\underline G_{E,R}$ satisfies~\eqref{e.G_bounded_sub}, then $H_t(0,\cdot) \geq 0$.  In addition, differentiating~\eqref{e.H} in time yields a parabolic equation for $H_t$ that enjoys the comparison principle and of which $0$ is a solution.  We conclude that $\min_{t,y} H_t(t,y) \geq 0$ by applying the comparison principle to $H_t$ and $0$.
	
	Since, for all $y$, $H(t,y)$ is increasing in $t$, there exists $G_{E,R}(y)$ such that $H(t,y) \to G_{E,R}(y)$ as $t\to\infty$.  In addition, we have that $H(t,\cdot) \leq G_{E,R}$ for all $t$.  Finally we point out that $G_{E,R} \leq 1 + E$ since $H \leq 1 + E$.
	
	We also note that, by parabolic regularity theory, for any $\alpha\in(0,1)$, there exists $C>0$, depending only on $\alpha$ and $E$, such that
	\be\label{e.c6191}
		\|H \|_{C^{2+\alpha}_{\rm parabolic}([0,\infty) \times B_R)}
			\leq C,
	\ee
	where $C^{2 + \alpha}_{\rm parabolic}$ is the standard parabolic H\"older space.
	
	We claim that $\|\partial_t H(t_n)\|_{L^2(B_R)} \to 0$ along some subsequence $t_n \to \infty$.  If not, then there exists $\delta>0$ and $t_0>0$ such that, for all $t\geq t_0$, $\|\partial_t H(t)\|_2 \geq \delta$.  Using~\eqref{e.c6191} and the nonnegativity of $\partial_t H$, we find, for all $t\geq t_0$,
	\[
		\delta^2
			\leq \int_{B_R} |\partial_t H(t,y)|^2 dy
			\leq C \int_{B_R} \partial_t H(t,y) dy.
	\]
	Integrating this and using that $0 \leq H \leq G_{E,R} \leq 1+E$, we find, for any $T>0$,
	\[\begin{split}
		\frac{\delta^2 T}{C}
			&\leq \int_{t_0}^{t_0+T} \int_{B_R} \partial_t H(t,y) dy dt
			= \int_{t_0}^{t_0+T}  \partial_t \int_{B_R} H(t,y) dy dt\\
			&= \int_{B_R} H(t_0+T,y) dy - \int_{B_R} H(t_0,y) dy
			\leq \int_{B_R} G_{E,R}(y) dy
			\leq \pi R^2 (1+E).
	\end{split}\]
	Taking $T\to\infty$ yields a contradiction.  Hence, there exists a sequence $t_n\to\infty$ as $n\to\infty$ such that $\|\partial_t H(t_n)\|_2 \to 0$.
	
	Up to taking a subsequence, the bounds in~\eqref{e.c6191} and the compactness of $C^{2,\alpha}_{\rm parabolic}$ in $C^2_{\rm parabolic}$, imply that $H(t_n) \to G_{E,R}$ in $C^2_{\rm parabolic}$.  In addition, the resulting convergence of $\partial_t H(t_n)$ in $L^\infty$ and its convergence to zero in $L^2$ implies that $\partial_t H(t_n) \to 0$ in $L^\infty$.  We conclude that
	\[
		0
			= \frac{1}{2}\Delta G_{E,R} + \frac{y}{2} \cdot\nabla G_{E,R}
				+ \left(1 + E - G_{E,R}\right) G_{E,R}.
	\]
	
	To conclude the proof, we check the various properties of $G_{E,R}$.  First, the nonnegativity of $G_{E,R}$ follows from the fact that $H$ is increasing in time and $H(0,\cdot) = \underline G_{E,R} \geq 0$.
	
	Second, parabolic regularity theory implies that $H$ is uniformly (in time) small near $\partial B_R$, which implies that $G_{E,R} = 0$ on $\partial B_R$.
	
	Third, recall the earlier observation that $G_{E,R} \leq 1+E$.  The strong maximum principle applied to~\eqref{e.G_bounded} implies that this inequality is strict.
	
	Fourth, the lower bound on the integral of $G_{E,R}$ follows from the lower bound on $\underline G_{E,R} = H(0,\cdot)$ and the fact that $H$ is increasing.
	
	Finally, $H$ is radial at $t=0$ by construction of $\underline G_{E,R}$.  Let $M$ be any rotation matrix and define $H_M(t,y) = H(t,My)$.  It is easy to see that $H_M$ solves~\eqref{e.H}.  Thus, by the uniqueness of solutions of parabolic equations, we find $H_M = H$.  We conclude that $H(t,\cdot)$ is radial for all $t$, from which it follows that $G_{E,R}$ is radial.
	
	The last step is to check the uniqueness of $G_{E,R}$.  We drop the $E$ and $R$ subscripts for ease.  Suppose that $H$ is another nontrivial solution~\eqref{e.G_bounded}.  Define $G_A(y) = A G(x)$.  It is easy to verify that
	\[
		0
			= \frac{1}{2}\Delta G_A + \frac{y}{2} \cdot\nabla G_A
				+\left(1 + E - \frac{1}{A} G_A\right) G_A.
	\]
	The Hopf maximum principle implies that the outward point normal derivative of $G$ is negative on $\partial B_R$.  Hence, we have that $G_A > H$ for all $A$ sufficiently large.  We define
	\[
		A_0 = \inf \{A \geq 1 : G_A \geq H\}.
	\]
	If $A_0 =1$, we conclude that $G \geq H$, which is our goal.  If not, 
	let $\Psi(y) = G_{A_0}(y) - H(y)$ for all $y$.  We then find that
	\[
		-H\left( H - \frac{G_{A_0}}{A_0}\right)
			= \frac{1}{2} \Delta \Psi + \frac{y}{2} \cdot \nabla \Psi
				+ \left(1 + E - \frac{1}{A_0} G_{A_0}\right) \Psi.
	\]
	
	By the choice of $A_0$, we either have that there exists $y_0 \in B_R$ such that $\Psi(y_0) = 0$ or there exists $y_0 \in \partial B_R(0)$ such that $y_0 \cdot \nabla G_{A_0}(y_0) = y_0 \cdot \nabla H(y_0)$.
	
	Consider the first case.  Let $\Sigma$ be the maximal open connected component of $B_R \cap \{G_{A_0} < A_0 H\}$ containing $y_0$.  Then $\Psi > 0$ on $\partial \Sigma$ and $\Psi$ satisfies
	\[
		0 \geq \frac{1}{2} \Delta \Psi + \frac{y}{2} \cdot \nabla \Psi
				+ \left(1 + E - \frac{1}{A} G_{A_0}\right) \Psi
					\qquad \text{ on } \Sigma.
	\]
	The strong maximum principle implies that $\Psi>0$ on $\Sigma$, which contradicts the fact that $\Psi(y_0)=0$.
	
	The second case proceeds similarly, except using the Hopf lemma to conclude that $y_0 \cdot \nabla \Psi(y_0) < 0$ to obtain a contradiction.  We omit the details.
	
%
%
	
	Above we showed that $G \leq H$.  The argument used to establish this used nothing about $G$ from its construction; we only used that it is nontrivial.  Thus, an identical argument implies that $H \leq G$ as well, which yields $G=H$.  Hence, non-trivial solutions of~\eqref{e.G_bounded} are unique.  This concludes the proof.
\end{proof}

We show that $G_{E,R}$ decays exponentially away from the origin independently of $R$.  We require this to show that $G$, the limiting object, is exponentially decaying as stated in \Cref{t.hd_steady}.(ii) and in order to establish a relationship between the sizes of $E$ and $\int G_{E,R} \dy$ in the sequel.

\begin{lemma}\label{l.G_ER_upper_bound}
For any $E$ and $R$ as in \Cref{p.solution_bounded},
\[
	G_{E,R}(y)
		\leq (1+E) e^{4(1+E) - \frac{y^2}{4}}
			\qquad\text{ for all } |y| \geq 4 \sqrt{1+E}.
\]
\end{lemma}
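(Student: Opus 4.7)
\medskip
\noindent\textbf{Proof plan.} The idea is to construct an explicit radial supersolution of~\eqref{e.G_bounded} on the annulus $A := \{4\sqrt{1+E} \le |y| \le R\}$ matching the desired exponential decay, and then compare it to $G_{E,R}$. The natural candidate is
\[
    \bar G(y) := (1+E)\exp\!\left(4(1+E) - \tfrac{|y|^2}{4}\right),
\]
since this is the right hand side of the inequality we want to prove. A direct computation in $d=2$, using $\nabla\bar G = -\tfrac{y}{2}\bar G$ and $\Delta\bar G = \bar G(-1 + \tfrac{|y|^2}{4})$, gives
\[
    \tfrac{1}{2}\Delta\bar G + \tfrac{y}{2}\cdot\nabla\bar G + (1+E-\bar G)\bar G = \bar G\left(\tfrac{1}{2} + E - \tfrac{|y|^2}{8} - \bar G\right).
\]
For $|y| \geq 4\sqrt{1+E}$ we have $|y|^2/8 \geq 2(1+E) \geq \tfrac{1}{2}+E$, so the bracketed factor is non-positive and hence $\bar G$ is a classical supersolution on $A$. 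The boundary conditions on $A$ are also immediate: on $|y|=R$, $G_{E,R}=0 \leq \bar G$; on $|y|=4\sqrt{1+E}$, $\bar G = 1+E$ while $G_{E,R} < 1+E$ strictly by \Cref{p.solution_bounded}, so $G_{E,R} \leq \bar G$ there too.

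The main obstacle is the comparison step itself. The usual subtraction trick fails because the nonlinearity $(1+E-u)u$ is not decreasing in $u$: writing $\Phi := G_{E,R} - \bar G$ produces a linear equation for $\Phi$ with a zeroth order coefficient $(1+E - G_{E,R} - \bar G)$ of indefinite sign on $A$, so the ordinary maximum principle does not close. To bypass this, I would instead work with the ratio $W := G_{E,R}/\bar G$, which is continuous on $\overline A$ since $\bar G$ is strictly positive. A short calculation using $L[G_{E,R}]=0$ and the supersolution inequality $L[\bar G] \leq 0$ (where $L[u]=\tfrac{1}{2}\Delta u + \tfrac{y}{2}\cdot\nabla u + (1+E-u)u$), together with dividing through by $\bar G$, yields the pointwise inequality
\[
    \tfrac{1}{2}\Delta W + \Bigl(\tfrac{y}{2} + \tfrac{\nabla\bar G}{\bar G}\Bigr)\cdot\nabla W + W\,\bar G\,(1-W) \geq 0 \qquad\text{in } A.
\]

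Finally, at any interior maximum $y_0$ of $W$ on $\overline A$, the first two terms are non-positive, forcing $W(y_0)\bar G(y_0)(1-W(y_0)) \geq 0$, and since $W(y_0),\bar G(y_0) > 0$, this gives $W(y_0) \leq 1$. On the boundary of $A$, the checks above yield $W = 0$ on $|y|=R$ and $W < 1$ on $|y|=4\sqrt{1+E}$. Hence $W \leq 1$ throughout $\overline A$, which is exactly the claimed bound $G_{E,R}(y) \leq \bar G(y)$ for $|y| \geq 4\sqrt{1+E}$. The only subtle point is the ratio substitution, which handles the absence of a sign on the linearized zeroth order coefficient; once that is in place, everything else is a direct computation.
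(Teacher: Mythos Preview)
Your proof is correct, and your ratio substitution $W = G_{E,R}/\bar G$ is a clean way to handle the non-monotone nonlinearity. The derivation of the inequality for $W$ checks out: writing $G_{E,R} = W\bar G$, expanding $L[G_{E,R}]=0$, and using $L[\bar G]\leq 0$ indeed gives
\[
	\tfrac{1}{2}\Delta W + \Bigl(\tfrac{y}{2} + \tfrac{\nabla\bar G}{\bar G}\Bigr)\cdot\nabla W + W\bar G(1-W) \geq 0,
\]
and the maximum principle argument on $\overline A$ then closes since $G_{E,R}>0$ in the interior and $G_{E,R}<1+E$ on the inner sphere by \Cref{p.solution_bounded}.

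The paper takes a different but closely related route: a sliding argument. It considers the one-parameter family $\psi_A(y)=Ae^{-|y|^2/4}$, checks that each $\psi_A$ is a \emph{strict} supersolution of the \emph{linear} problem $-\tfrac{1}{2}\Delta\psi - \tfrac{y}{2}\cdot\nabla\psi - (1+E)\psi>0$ on the annulus, starts with $A$ large enough that $\psi_A\geq 1+E>G_{E,R}$, and slides $A$ down to the infimal $A_0$ for which $\psi_{A_0}\geq G_{E,R}$. If $A_0>(1+E)e^{4(1+E)}$, the touching point is interior, and subtracting the two equations there yields $0\geq -\tfrac{1}{2}\Delta(\psi_{A_0}-G_{E,R}) > G_{E,R}^2>0$, a contradiction. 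Both arguments confront the same obstacle (the zeroth-order coefficient in the subtracted equation has no sign) and resolve it by avoiding that subtraction: the paper keeps only the linear part in the barrier inequality and lets the dropped $G_{E,R}^2$ term supply strictness at the touching point, whereas you pass to the ratio so that the zeroth-order term becomes the concave expression $W\bar G(1-W)$. Your approach is slightly more direct in that it fixes the target barrier from the outset and needs no sliding; the paper's approach is perhaps more robust in situations where the ratio equation is less transparent.
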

\begin{proof}
For any $A\geq 0$, let $\psi_A(y) = A e^{-\frac{y^2}{4}}$.  We first claim that, for $|y| \geq 4\sqrt{1+E}$,
\be\label{e.c6193}
	-\frac{1}{2}\Delta \psi
		- \frac{y}{2} \cdot \nabla \psi
		- \left(1 + E\right) \psi
		> 0.
\ee
To this end, we compute:
\[
\begin{split}
	-\frac{1}{2}\Delta \psi_A
		- \frac{y}{2} \cdot\nabla \psi_A
		- (1+E) \psi_A
		&= - \frac{1}{2}\left( \frac{y^2}{4} \psi_A -  \psi_A \right)
		- \frac{y}{2} \cdot \left( - \frac{y}{2}\psi_A\right)
		- (1+E) \psi_A \\
		&= \frac{1}{8}\psi_A \left( y^2 - 2(2 + 4E) \right).
\end{split}
\]
Hence, if $y^2 \geq 16(1+E)$, then we conclude~\eqref{e.c6193}.

If $A \geq (1+E) e^{R^2/4}$, then $\psi_A \geq 1+E > G_{E,R}$  (recall the upper bound on $G_{E,R}$ from \Cref{p.solution_bounded}).  Thus, let
\[
	A_0 = \inf\left\{A > 0 : \psi_A \geq G_{E,R} \mbox{ on } |y|\in[4\sqrt{1+E}, R]\right\}
\]
is well-defined.  We claim that $A_0 \leq \left(1+E\right) e^{4(1+E)}$.  We argue by contradiction assuming that $A_0 > (1+E) e^{4(1+E)}$.

By continuity, there exists $y_0$ such that $|y_0| \in [4\sqrt{1+E}, R]$ such that $\psi_{A_0}(y_0) = G_{E,R}(y_0)$.  Since
\[
	\psi_{A_0}(4\sqrt{1+E}) = A_0e^{-4(1+E)}
		> (1+E) > G_{E,R}
	\quad\text{ and }\quad
	\psi_{A_0}(R) > 0 = G_{E,R}(R),
\]
it must be that $|y_0| \in (4 \sqrt{1+E}, R)$.  In addition, by construction, $y_0$ is the location of a minimum of zero of $\psi_{A_0} - G_{E,R}$.  Hence $\Delta (\psi_{A_0} - G_{E,R}) \geq 0$, $\nabla (\psi_{A_0} - G_{E,R}) = 0$, and $\psi_{A_0}(y_0) = G_{E,R}(y_0)$.  Hence, at $y_0$,
\[\begin{split}
	0 &\geq - \frac{1}{2}\Delta \left( \psi_{A_0} - G_{E,R}\right)\\
		&> \left(\frac{y}{2} \cdot \nabla \psi_{A_0}
			+ (1+E)\psi_{A_0}\right)
			- \left(\frac{y}{2} \cdot \nabla G_{E,R}
			+ (1+E - G_{E,R})G_{E,R}\right)\\
		&= G_{E,R}^2 > 0.
\end{split}\]
Here we used~\eqref{e.G_bounded} and~\eqref{e.c6193} to obtain the second inequality.  This is clearly a contradiction, so we conclude that $A_0 \leq (1+E) e^{4(1+E)}$.  It follows that, for all $|y|\in [4\sqrt{1+E}, R]$,
\[
	G_{E,R}(y)
		\leq (1+E) e^{4(1+E)} e^{-\frac{y^2}{4}},
\]
which concludes the proof.
\end{proof}

Next we show that there exists $E$ such that $G_{E,R}$ has $L^1$-norm one.

\begin{lemma}\label{l.mass_one}
	There exists $\mathcal{E}_R\footnote{We make a slight change in convention here using the italicized ``E'' in order to avoid clashing notation with $E_f$ for the squared $L^2$-norm of a function $f$.} \approx 1$, depending only on $R$, such that 
	\[
		\int_{B_R} G_{\mathcal{E}_R,R}(y) dy = 1.
	\]
\end{lemma}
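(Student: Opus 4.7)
The plan is to apply the intermediate value theorem to the function
\[
  I(E) := \int_{B_R} G_{E,R}(y)\, dy,
\]
defined for $E$ in the admissible range $E \geq 400/R^2$ from \Cref{p.solution_bounded}. Two structural properties are needed. For \emph{monotonicity}, the subsolution $\underline G_{E,R}(y) = (E/2) e^{-|y|^2/2}\phi(|y|)$ is visibly nondecreasing in $E$, and the nonlinearity $(1+E-H)H$ is nondecreasing in $E$ for $H \in [0, 1+E]$, so parabolic comparison applied to the evolution~\eqref{e.H} (followed by sending $t\to\infty$ as in the proof of \Cref{p.solution_bounded}) gives $G_{E_1,R} \leq G_{E_2,R}$ pointwise whenever $E_1 \leq E_2$. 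For \emph{continuity}, the uniform bound $G_{E,R} \leq 1+E$ together with standard elliptic $C^{2+\alpha}$ estimates makes the family $\{G_{E,R}\}$ precompact on compact $E$-intervals, and the uniqueness clause of \Cref{p.solution_bounded} identifies every subsequential limit as $E_n \to E$ (which is nontrivial since $G_{E_n,R} \geq \underline G_{E_n,R}$) with $G_{E,R}$. Hence $I$ is continuous and monotone.

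The large-$E$ endpoint is immediate: since $G_{E,R} \geq \underline G_{E,R}$ and $\phi \equiv 1$ on $B_{R/3}$, a direct Gaussian computation yields $I(E) \geq \pi E/2$ for $R$ large. This both sends $I(E) \to \infty$ as $E \to \infty$ and produces the upper bound $\mathcal{E}_R \lesssim 1$ for any $\mathcal{E}_R$ that is eventually found.

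The main obstacle is the matching linear bound $I(E) \lesssim E$ for small admissible $E$, which simultaneously provides a point where $I(E) < 1$ (so that the IVT applies) and forces the universal lower bound $\mathcal{E}_R \gtrsim 1$. My approach is to integrate~\eqref{e.G_bounded} against $1$ on $B_R$; since $G_{E,R} = 0$ on $\partial B_R$ and $\partial_\nu G_{E,R} \leq 0$ there by Hopf's lemma, integration by parts in $d=2$ produces the identity
\[
  \|G_{E,R}\|_{L^2(B_R)}^2 \;=\; E\, I(E) + \tfrac{1}{2}\int_{\partial B_R} \partial_\nu G_{E,R}\, d\sigma \;\leq\; E\, I(E).
\]
Splitting $I(E)$ at the radius $R_\ast := 10\sqrt{1+E}$, Cauchy--Schwarz on $B_{R_\ast}$ together with the identity controls the bulk by $10\sqrt{\pi(1+E)\, E\, I(E)}$, while \Cref{l.G_ER_upper_bound} controls the tail by $4\pi(1+E)\, e^{-21(1+E)}$, which is uniformly tiny since $e^{-21} \ll 1$. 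Young's inequality absorbs the $\sqrt{I(E)}$ factor into $I(E)/2$ and yields a universal estimate
\[
  I(E) \;\leq\; C_1\, E(1+E) \;+\; C_2\, (1+E)\, e^{-21(1+E)}.
\]
For $R$ sufficiently large the right-hand side at $E = 400/R^2$ is strictly less than $1$, so the IVT produces $\mathcal{E}_R$ with $I(\mathcal{E}_R) = 1$. Evaluating the displayed inequality at $E = \mathcal{E}_R$ and using $C_2 e^{-21} \ll 1$ forces $\mathcal{E}_R \gtrsim 1$, which combined with the upper bound from the previous paragraph gives $\mathcal{E}_R \approx 1$.
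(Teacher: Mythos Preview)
Your proof is correct and follows essentially the same approach as the paper: both integrate~\eqref{e.G_bounded} over $B_R$ to obtain $\|G_{E,R}\|_{L^2(B_R)}^2 \leq E\, I(E)$, then split $I(E)$ into a bulk piece controlled by Cauchy--Schwarz and a tail piece controlled by \Cref{l.G_ER_upper_bound}, and conclude via the intermediate value theorem together with the lower bound $I(E) \gtrsim E$ from \Cref{p.solution_bounded}. Your version is somewhat more explicit (you justify continuity via compactness plus the uniqueness clause of \Cref{p.solution_bounded}, and you add a monotonicity argument that even yields uniqueness of $\mathcal{E}_R$), whereas the paper simply invokes continuous dependence on coefficients; but the substantive ideas coincide.
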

\begin{proof}
	To establish this we use the continuity of solutions of elliptic equations with respect to their coefficients.  With this in mind, we need only find $E_1$ and $E_2$ such that
	\[
		\int_{B_R} G_{E_1, R}(y) dy \leq 1
			\qquad\text{ and }\qquad
		\int_{B_R} G_{E_2,R}(y) \geq 1.
	\]
	The second inequality follows from \Cref{p.solution_bounded}, after taking $E_2\gtrsim 1$.
	
	To find the first inequality, integrate~\eqref{e.G_bounded} over $B_R$ to find
	\be\label{e.c6201}
		-\frac{1}{2}\int_{\partial B_R} \frac{y}{|y|} \cdot \nabla G_{E_1,R} dy
			= E_1 \int_{B_R} G_{E_1,R} dy
				- \int_{B_R} G_{E_1,R}^2 dy.
	\ee
	As $G$ is positive in $B_R$ and zero on $\partial B_R$, we find that the left hand side is nonnegative.  Hence,
	\[
		\int_{B_R} G_{E_1,R}^2 dy
			\leq E \int_{B_R} G_{E_1,R} dy
	\]
	
	We wish to estimate the integral on the right hand side above.  Let $L>4 \sqrt{1+E_1}$ be a constant chosen in the sequel.  Then, using H\"older's inequality and \Cref{l.G_ER_upper_bound}, we obtain
	\[\begin{split}
		\int_{B_R} G_{E_1,R} dy
			&\leq \int_{B_L} G_{E_1,R} dy + \int_{B_R\setminus B_L} (1+E_1)e^{4(1+E_1) - \frac{y^2}{4}} dy\\
			&\lesssim L \left(\int_{B_L} G_{E_1,R}^2 dy\right)^{1/2}
				+ L^2(1+E_1) e^{4(1+E_1) -\frac{L^2}{4}}\\
			&\leq L \left(\int_{B_R} G_{E_1,R}^2 dy\right)^{1/2}
				+ L^2(1+E_1) e^{4(1+E_1) -\frac{L^2}{4}}\\
			&\leq  L \left( E_1 \int_{B_R} G_{E_1,R} dy\right)^{1/2}
				+ L^2(1+E_1) e^{4(1+E_1) -\frac{L^2}{4}}.
	\end{split}\]
	Choosing $L$ sufficiently large and then $E_1$ sufficiently small independent of $R$, we find
	\[
		\int_{B_R} G_{E_1,R} dy
			< 1.
	\]
	This concludes the proof.
\end{proof}

Using \Cref{l.mass_one}, we let $G_R = G_{\mathcal{E}_R,R}$ denote the solution of~\eqref{e.G_bounded} with mass one.  We now show that $\mathcal{E}_R \approx \|G_R\|_2^2$ as $R\to\infty$.

\begin{lemma}\label{l.E_R}
As $R$ tends to $\infty$, $|\mathcal{E}_R - \|G_R\|_2^2| \to 0.$
\end{lemma}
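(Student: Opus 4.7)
The plan is to derive an exact identity expressing $\mathcal{E}_R - \|G_R\|_2^2$ as a boundary flux integral, and then show this integral vanishes as $R \to \infty$ using the uniform Gaussian upper bound from \Cref{l.G_ER_upper_bound} together with elliptic boundary regularity.

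First I would integrate equation~\eqref{e.G_bounded} (with $E = \mathcal{E}_R$) over $B_R$. Using the divergence theorem, that $d=2$, $G_R \equiv 0$ on $\partial B_R$, and the normalization $\int_{B_R} G_R\, dy = 1$, the drift contribution evaluates to
\[
\int_{B_R} \frac{y}{2}\cdot\nabla G_R\, dy
= -\frac{d}{2}\int_{B_R} G_R\, dy = -1,
\]
while the Laplacian term integrates to $\tfrac{1}{2}\int_{\partial B_R} \partial_\nu G_R\, dS$, where $\nu$ is the outward unit normal. Combining these with the reaction term yields the key identity
\[
\mathcal{E}_R - \|G_R\|_2^2
= -\frac{1}{2}\int_{\partial B_R}\partial_\nu G_R\, dS
= \frac{1}{2}\int_{\partial B_R}|\partial_\nu G_R|\, dS,
\]
the last equality using that $G_R \geq 0$ in $B_R$ and vanishes on $\partial B_R$, so $\partial_\nu G_R \leq 0$. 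In particular, this already shows $\mathcal{E}_R \geq \|G_R\|_2^2$.

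Second, I would bound $|\partial_\nu G_R|$ pointwise on $\partial B_R$. Since $\mathcal{E}_R \approx 1$ uniformly in $R$ by \Cref{l.mass_one}, \Cref{l.G_ER_upper_bound} supplies a universal constant $C$ with $G_R(y) \leq C e^{-|y|^2/4}$ for all sufficiently large $|y|$. Near any $y_0 \in \partial B_R$, $G_R$ satisfies a linear elliptic equation (writing $(1+\mathcal{E}_R - G_R)G_R$ as a bounded potential times $G_R$) with zero Dirichlet data on the boundary and is majorized by $Ce^{-(R-1)^2/4}$ throughout $B_1(y_0)\cap \overline{B_R}$. Standard $C^{1,\alpha}$ boundary regularity, with a constant depending polynomially on the drift $|y|\lesssim R$, then gives
\[
|\partial_\nu G_R(y_0)| \lesssim R\, e^{-(R-1)^2/4}.
\]
Integrating over $\partial B_R$ (of length $2\pi R$) produces
\[
0 \leq \mathcal{E}_R - \|G_R\|_2^2 \lesssim R^2\, e^{-(R-1)^2/4} \xrightarrow[R\to\infty]{} 0,
\]
which is the claim.

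The only mildly delicate point is that the drift coefficient $y/2$ is not bounded uniformly in $R$, so some care is needed when invoking the boundary Schauder estimate. This is not a real obstacle: the polynomial-in-$R$ dependence of the estimate is crushed by the Gaussian factor $e^{-(R-1)^2/4}$. Alternatively, one can avoid appealing to abstract regularity altogether by constructing an explicit radial barrier on the annulus $\{R-1 \leq |y|\leq R\}$ of the form $\psi(r) = A e^{-r^2/4}(R-r)$, choosing $A$ so that $\psi \geq G_R$ on the inner boundary, and comparing to obtain the same pointwise bound on $|\partial_\nu G_R|$.
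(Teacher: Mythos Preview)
Your proof is correct and follows the same strategy as the paper: derive the boundary-flux identity $\mathcal{E}_R - \|G_R\|_2^2 = -\tfrac{1}{2}\int_{\partial B_R}\partial_\nu G_R$, then combine the Gaussian upper bound with elliptic boundary regularity to show the flux vanishes. The only difference is in handling the unbounded drift $\tfrac{y}{2}\cdot\nabla$: the paper substitutes $W = e^{|y|^2/4}G_R$ so that the transformed equation has bounded first-order coefficients (and then applies $H^2$ boundary estimates plus Sobolev embedding), whereas you keep the original equation and absorb the polynomial-in-$R$ Schauder constant into the Gaussian factor---both routes give the same $R^{O(1)}e^{-(R-1)^2/4}$ bound and are equally valid.
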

\begin{proof}
	Using~\eqref{e.c6201}, \Cref{l.mass_one}, and the fact that $G_R$ is radial, it is sufficient to show that
	\be\label{e.c6202}
		- \frac{1}{2} \int_{\partial B_R} \frac{y}{|y|} \cdot \nabla G_R(y)dy
			= - \pi R (G_R)_r(R) \to 0
	\ee
	Let $W = e^{y^2/4} G_R$ and, as in~\eqref{e:W},
	\[
		0
			= \frac{1}{2}\Delta W + W\left( \frac{1}{2} - \frac{y^2}{4} + \mathcal{E}_R - G_R\right).
	\]
	Abusing notation and changing to polar coordinates, we find
	\[
		0
			= \frac{1}{2}W_{rr} + \frac{1}{2r} W_r
				+ W \left( \frac{1}{2} - \frac{r^2}{4} + \mathcal{E}_R - G_R\right)
	\]
	We note that the reason for the change to working with $W$ is to work with an equation whose first order term is bounded uniformly regardless of $R$.  Hence, we can apply $L^2$ estimates for solutions of elliptic equations up to the boundary (see, e.g., \cite[Theorem 8.12]{GilbargTrudinger}), we find
	\[
		\|W\|_{H^2([R-1,R])}
			\lesssim \left\|\left(\frac{1}{2} - \frac{r^2}{4} + \mathcal{E}_R - G_R\right)W\right\|_{L^2([R-2,R])} + \|W\|_{L^2([R-2,R])}.
	\]
	Using \Cref{l.G_ER_upper_bound} to bound $G_R$ and, thus, $W$ from above and \Cref{l.mass_one} to bound $\mathcal{E}_R$ from above, we see that $W$ is uniformly bounded from above on $[R-2,R]$ as long as $R$ is sufficiently large.  We deduce that
	\[
		\|W\|_{H^2([R-1,R])}
			\lesssim R^2.
	\]
	Using the Sobolev embedding theorem and the relationship between $G_R$ and $W$, we find
	\[\begin{split}
		\|G_R\|_{C^1([R-1,R])}
			&\lesssim R e^{-\frac{(R-1)^2}{4}} \|W\|_{C^1([R-1,R])}\\
			&\lesssim R e^{-\frac{(R-1)^2}{4}} \|W\|_{H^2([R-1,R])}
			\lesssim R^3 e^{-\frac{(R-1)^2}{4}}.
	\end{split}\]
	This establishes~\eqref{e.c6202}, which concludes the proof.
\end{proof}

We now finish the construction of the steady state $G$. 

\begin{proof}[Proof of \Cref{t.hd_steady}.(ii)]
From \Cref{p.solution_bounded}, \Cref{l.G_ER_upper_bound}, and \Cref{l.mass_one} we have that
\[
	G_R \lesssim e^{-\frac{y^2}{4}}.
\]
Using a similar argument as we did in the conclusion of \Cref{l.E_R} along with the Schauder estimates for elliptic equations, we find that, for any $\alpha \in (0,1)$,
\[
	\| e^{y^2/5} G_R\|_{C^{2,\alpha}}
		\lesssim 1.
\]

We thus find a subsequence $R_n\to\infty$ as $n\to\infty$, and $G \in C^{2,\alpha}$ such that $G_{R_n} \to G$ uniformly in $C^2$, and, due to the decay in $y$, in $L^1$ and $L^2$ as well.  We conclude that $\int G \dy = 1$ and $\|G_{R_n}\|_2^2 \to \|G\|_2^2$.  From \Cref{l.E_R}, we further have that $E_{R_n} \to \|G\|_2^2$.

Using all conclusions from the above, we find that $G$ is a radial function satisfying
\[
	0 = \Delta G + \frac{y}{2} \cdot \nabla G + G(1 + \|G\|_2^2 - G),
\]
which concludes the proof.
\end{proof}

\subsubsection*{Acknowledgement}

 YG was partially supported by the NSF through DMS-2203014. CH was partially supported by NSF grants DMS-2003110 and DMS-2204615.


\begin{thebibliography}{10}

\bibitem{ABVV}
{\sc N.~Apreutesei, N.~Bessonov, V.~Volpert, and V.~Vougalter}, {\em Spatial
  structures and generalized travelling waves for an integro-differential
  equation}, Discrete Contin. Dyn. Syst. Ser. B, 13 (2010), pp.~537--557.

\bibitem{BerestyckiHamelNadin}
{\sc H.~Berestycki, F.~Hamel, and G.~Nadin}, {\em Asymptotic spreading in
  heterogeneous diffusive excitable media}, J. Funct. Anal., 255 (2008),
  pp.~2146--2189.

\bibitem{BerestyckiNadinPerthameRyzhik}
{\sc H.~Berestycki, G.~Nadin, B.~Perthame, and L.~Ryzhik}, {\em The non-local
  {F}isher-{KPP} equation: travelling waves and steady states}, Nonlinearity,
  22 (2009), pp.~2813--2844.
  
  \bibitem{bolt}
{\sc  E.~Bolthausen}, {\em A note on the diffusion of directed polymers in a random environment},
Commun. Math. Phys. 123 (1989), pp.~529--534.

\bibitem{BHR_toads_delay}
{\sc E.~Bouin, C.~Henderson, and L.~Ryzhik}, {\em The {B}ramson logarithmic
  delay in the cane toads equations}, Quart. Appl. Math., 75 (2017),
  pp.~599--634.

\bibitem{BHR_nonlocal}
\leavevmode\vrule height 2pt depth -1.6pt width 23pt, {\em The {B}ramson delay
  in the non-local {F}isher-{KPP} equation}, Ann. Inst. H. Poincar\'{e} Anal.
  Non Lin\'{e}aire, 37 (2020), pp.~51--77.

\bibitem{Britton}
{\sc N.~F. Britton}, {\em Spatial structures and periodic travelling waves in
  an integro-differential reaction-diffusion population model}, SIAM J. Appl.
  Math., 50 (1990), pp.~1663--1688.


\bibitem{comets2017directed}
{\sc F.~Comets}, {\em Directed polymers in random environments}, Springer,
  2017.

\bibitem{Constantin_2000}
{\sc P.~Constantin, A.~Kiselev, A.~Oberman, and L.~Ryzhik}, {\em Bulk burning
  rate in passive--reactive diffusion}, Archive for Rational Mechanics and
  Analysis, 154 (2000), pp.~53--91.
  
  
  \bibitem{corwin2012kardar}
{\sc I.~Corwin}, {\em The Kardar--Parisi--Zhang equation and universality
  class}, Random matrices: Theory and applications, 1 (2012), p.~1130001.
  
  

\bibitem{Fisher}
{\sc R.~Fisher}, {\em The wave of advance of advantageous genes}, Ann.
  Eugenics, 7 (1937), pp.~355--369.

\bibitem{GarnierGilettiNadin}
{\sc J.~Garnier, T.~Giletti, and G.~Nadin}, {\em Maximal and minimal spreading
  speeds for reaction diffusion equations in nonperiodic slowly varying media},
  J. Dynam. Differential Equations, 24 (2012), pp.~521--538.

\bibitem{GilbargTrudinger}
{\sc D.~Gilbarg and N.~S. Trudinger}, {\em Elliptic partial differential
  equations of second order}, Classics in Mathematics, Springer-Verlag, Berlin,
  2001.
\newblock Reprint of the 1998 edition.

\bibitem{GuHenderson}
{\sc Y.~Gu and C.~Henderson}, {\em A {PDE} hierarchy for directed polymers in
  random environments}, Nonlinearity, 34 (2021), no. 10, 7335–7370.
  

\bibitem{HamelRyzhik}
{\sc F.~Hamel and L.~Ryzhik}, {\em On the nonlocal {F}isher-{KPP} equation:
  steady states, spreading speed and global bounds}, Nonlinearity, 27 (2014),
  pp.~2735--2753.
  
   \bibitem{spencer} 
{\sc  J. Z.~Imbrie and T. Spencer}, {\em Diffusion of directed polymers in a random environment}, Jour.
Stat. Phys., 52 (1988), pp.~609--626.

\bibitem{KPP}
{\sc A.~Kolmogorov, I.~Petrovskii, and N.~Piskunov}, {\em {\'E}tude de
  l'\'equation de la chaleur de mati\`ere et son application \`a un probl\`eme
  biologique}, Bull. Moskov. Gos. Univ. Mat. Mekh., 1 (1937), pp.~1--25.
\newblock See~\cite{Pel} pp. 105-130 for an {E}nglish translation.

\bibitem{Lieberman}
{\sc G.~M. Lieberman}, {\em Second order parabolic differential equations},
  World Scientific Publishing Co., Inc., River Edge, NJ, 1996.

\bibitem{Pel}
{\sc P.~Pelc{\'e}}, ed., {\em Dynamics of curved fronts}, Perspectives in
  Physics, Academic Press Inc., Boston, MA, 1988.

\bibitem{Xin_book}
{\sc J.~Xin}, {\em An introduction to fronts in random media}, vol.~5 of
  Surveys and Tutorials in the Applied Mathematical Sciences, Springer, New
  York, 2009.

\end{thebibliography}
\end{document}